\documentclass{amsart}[10pt]
\usepackage[utf8]{inputenc}
\usepackage[english]{babel}
\usepackage[T1]{fontenc}
\usepackage{amsmath}
\usepackage{amsfonts}
\usepackage{amssymb}
\usepackage{amsthm}
\usepackage{amscd}
\usepackage{geometry}
\usepackage{enumitem} 
\usepackage{cancel} 
\usepackage{graphicx}
\usepackage{mathtools}
\usepackage{color}
\usepackage{empheq}
\usepackage{hyperref}
\usepackage{comment}
\usepackage{mathrsfs}  

\newcommand\N{\mathbb{N}}
\newcommand\T{\mathbb{T}}

\newcommand\R{\mathbb{R}}

\newcommand\Div{\mathrm{div}}
\newcommand\D{\mathrm{D}}
\newcommand\eps{\varepsilon}

\renewcommand{\Im}{\operatorname{Im}}

\newcommand\reg{\mathrm{reg}}
\newcommand\spn{\mathrm{span}}
\newcommand\drag{\mathrm{drag}}

\newenvironment{proofof}[2]{\paragraph{\textit{ Proof of #1 #2.}}}{\hfill$\square$}

\theoremstyle{plain}
\newtheorem{theorem}{Theorem} [section]
\newtheorem{lemma}[theorem]{Lemma}

\newtheorem{proposition}[theorem]{Proposition}

\theoremstyle{remark}
\newtheorem{remark}[theorem]{Remark}

\theoremstyle{definition}
\newtheorem{definition}[theorem]{Definition}

\numberwithin{equation}{section}

\title[Global dissipative solutions of the defocusing isothermal ELK equations]{Global dissipative solutions of the defocusing isothermal Euler-Langevin-Korteweg equations}

\author{Quentin Chauleur}
\address{Univ Rennes, CNRS, IRMAR - UMR 6625, F-35000 Rennes, France}
\email{quentin.chauleur@ens-rennes.fr}

\begin{document}

\maketitle

\begin{abstract}
We construct global dissipative solutions on the torus of dimension at most three of the defocusing isothermal Euler-Langevin-Korteweg system, which corresponds to the Euler-Korteweg system of compressible quantum fluids with an isothermal pressure law and a linear drag term with respect to the velocity. In particular, the isothermal feature prevents the energy and the BD-entropy from being positive. Adapting standard approximation arguments we first show the existence of global weak solutions to the defocusing isothermal Navier-Stokes-Langevin-Korteweg system. Introducing a relative entropy function satisfying a Gronwall-type inequality we then perform the inviscid limit to obtain the existence of dissipative solutions of the Euler-Langevin-Korteweg system. 
\end{abstract}

\tableofcontents

\section{Introduction}
The aim of this paper is to give a proper notion of solution and to prove the global existence of such solutions of the isothermal Euler-Langevin-Korteweg system of equations (denoted ELK in the following for reader convenience):
\begin{subequations} \label{ELK}
\begin{empheq}[left=\empheqlbrace]{align}
& \partial_t \rho + \Div (\rho u) =0, \label{continuityELK}  \\
& \partial_t (\rho u) + \Div \left( \rho u \otimes u \right) + \lambda \nabla \rho + \mu \rho u= \frac{\hbar^2 }{2} \rho \nabla \left( \frac{\Delta \sqrt{\rho}}{\sqrt{\rho}} \right), \label{fluidELK} 
\end{empheq} 
\end{subequations}
where $t \in \left[0, T \right[$ for a fixed $T >0$. The unknown functions  are the density $\rho : \left[0, T \right[ \times \T^d \rightarrow \R_+ $ and the velocity field $u : \left[0, T \right[ \times \T^d \rightarrow \R^d $ of the fluid, with the initial condition $\rho(0,x)=\rho_0(x)$ and $u(0,x)=u_0(x)$ for all $x \in \T^d$, where $\T^d$ denotes the $d$-dimensional torus with $1\leq d \leq 3$. We also denote $\lambda >0$ the pressure constant, $\mu > 0$ the dissipation constant and $\hbar >0$ the renormalized Planck constant. Using the Madelung transform  $\psi = \sqrt{\rho}e^{iS/\eps}$, or in a more rigorous way the change of unknown $\rho = |\psi|^2$ and $\rho u=\hbar \Im (\psi^* \nabla \psi )$, this system is directly linked with the Schrödinger-Langevin equation: 
\begin{equation}  i \hbar \partial_t \psi + \frac{\hbar^2}{2}  \Delta \psi = \lambda \psi \log(|\psi|^2) + \frac{\hbar}{2i}  \mu  \psi \log\left( \frac{ \psi}{\psi^*} \right).  \label{NMeq} \end{equation} 

This equation first appears in Nassar's paper \cite{nassar1985} as a possible way to give a stochastic interpretation of quantum mechanics in the context of Bohmian mechanics. It had a recent renewed interest in the physics community, in particular in quantum mechanics in order to describe the continuous measurement of the position of a quantum particle (see for example \cite{nassar}, \cite{zander} or \cite{mousavi2019}) and in cosmology and statistical mechanics (see \cite{chavanis2017}, \cite{chavanis2019cosmo} or \cite{chavanis2019stat}). Note that in its physical interpretation, $\lambda=2 k_B \tau/ \hbar$ corresponds to a quantum friction coefficient, so both positive and negative signs could be of interest ($k_B$ and $\hbar$ denotes respectively the Boltzmann and the normalized Planck constant, and $\tau$ is an effective temperature), unlike the real friction coefficient $\mu$ which is taken positive (see \cite{chavanis2017}). In the mathematics community, this equation has not seen much interest yet, despite the presence of unusual nonlinear effects. In \cite{chauleur2020}, the author has shown that in the focusing case ($\lambda < 0$) every well-prepared density function of the solution of equation \eqref{NMeq} on $\R^d$  converges to a Gaussian function weakly in $L^1(\R^d)$, whose mass and center are uniquely determined by its initial data $\rho_0$ and $u_0$, whereas in the defocusing case ($\lambda > 0$) every solution disperses to 0, with a slower dispersion rate than usual directly affected by the nonlinear Langevin potential $\frac{1}{2i}\log\left( \psi/\psi^* \right)$. Still in the defocusing case, up to a space-time rescaling incorporating dispersive effects, the density $\rho$ of every rescaled solution of equation \eqref{NMeq} on $\R^d$ also converges to a Gaussian function weakly in $L^1(\R^d)$, a phenomenon which is reminiscent of the defocusing logarithmic Schrödinger equation \cite{carles2018} (corresponding to the case $\mu =0$ in \eqref{NMeq}). \\ 

The question of the existence of solutions to this kind of quantum system is already dealt with in the case of barotropic pressure of the form $P(\rho)= \lambda \rho^{\gamma}$ in \cite{antonelli2009}, where $\gamma >1$ (and $\mu=0$). However, the proof is based on the link with the power-like Schrödinger equation 
\begin{equation} \label{powerNLS}
i \hbar \partial_t \psi + \frac{\hbar^2}{2}  \Delta \psi = \lambda \psi |\psi|^{\gamma-1}  
\end{equation}
and the use of Strichartz estimates which do not seem to be helpful for the logarithmic nonlinearity. In fact, rather than estimate $\nabla( \psi |\psi^{\gamma-1}| )$, we have to deal with the quantity $\nabla( \psi \log (|\psi|^2))$ which becomes unbounded when the wave function $\psi$ vanishes to 0, preventing us from applying the fractional steps method of \cite{antonelli2009}. In the second part of \cite{carles2019}, the authors have shown the existence of solutions to the isothermal Euler-Korteweg system (which corresponds to the case $\mu=0$ in the system \eqref{ELK}), but again their proof is strongly based on the link with the logarithmic Schrödinger equation, which does not seem to be useful in our case due to the ill-posed nonlinear Langevin potential $\frac{1}{2i}\log\left( \psi/\psi^* \right)$. In a more regular framework, note that in the recent work \cite{ferriere2020}, the author shows the local existence in time of solutions to the Euler-Korteweg system satisfying some analytic regularity, which may be extended to the Euler-Langevin-Korteweg equations. \\

Unlike the Navier-Stokes-Korteweg system of equations (which corresponds to system \eqref{ELK} adding the viscous term $\nu \Div \left( \rho \mathbb{D} u \right)$ on the right hand side of \eqref{fluidELK}, see \cite{jungel2010} or the system \eqref{NSLK} below), the lack of viscous term in the Euler-Korteweg system prevents us from adding some smoothing terms in the equations and then make these terms tend to 0. However, in the recent paper \cite{Bresch2019}, the authors manage to pass to the viscous limit $\nu \rightarrow 0$ for a specific notion of weak solution called dissipative solution. We will base our proof of existence on this approach, with the specificity of taking an isothermal pressure law $\lambda \rho$ (which leads to the use of energy with no definite sign, see below) and adding a dissipation term $\mu \rho u$. Note that in the recent paper \cite{bresch2020} the authors have shown the exponential decay to equilibrium of global weak solutions of the Navier-Stokes-Korteweg system with such a dissipation term and for both barotropic and isothermal pressure laws. \\

Dissipative solutions were introduced by Lions \cite{lions1996}, and were recently used in fluid models in order to give a rigorous justification of some viscous singular limits (see \cite{feireisl2014}). They are based on the use of some particular functionals called \textit{relative entropies}, namely
\[  \mathcal{E} (U|V) : X \times Y \mapsto \R_+,\]
where $X$ denotes a Banach space of \textit{weak solutions} $U$, and $Y \subset X$ denotes a Banach space of  \textit{strong solutions} $V$ embedded into our space of weak solutions of the fluid system
\[ \frac{d}{dt} U(t) = \mathcal{A}(t,U(t)),\ t>0, \ U(0)=U_0, \]
where $\mathcal{A}$ is a (nonlinear) generator. A relative entropy has to enjoy the following three properties:
\begin{itemize}
    \item \textbf{Distance property.} For every $(U,V) \in X \times Y$,
    \[  \mathcal{E} (U|V) \geq 0 \ \ \  \text{and} \ \ \ \mathcal{E} (U|V)=0 \ \text{only if} \ U=V. \]
     \item \textbf{Lyapounov functional.} If $V$ is an \textit{equilibrium solution}, namely $\mathcal{A}(t,V(t))=0$ for all $t\geq 0$, then $V \in Y$ and 
     \[ \frac{d}{dt}  \mathcal{E} (U|V) \leq 0. \]
     \item \textbf{Gronwall inequality.} For every $(U,V) \in X \times Y$, for a.a. $t\geq 0$,
     \[ \mathcal{E} (U|V)(t) - \mathcal{E} (U|V)(0) \leq \int_0^t \mathcal{E} (U|V)(s) ds.    \]
\end{itemize}

The presence of the nonlinear quantum viscous term $\hbar^2 \rho \nabla \left( \Delta \sqrt{\rho}/\sqrt{\rho} \right)$, usually called Bohm potential, makes difficult to define a relative entropy for the ELK system \eqref{ELK}. In order to circumvent this difficulty, we denote $\overline{v}= \frac{\hbar}{2} \nabla \log \rho$ and introduce the augmented ELK system:
\begin{subequations} \label{ELK_augmented}
\begin{empheq}[left=\empheqlbrace]{align}
& \partial_t \rho + \Div (\rho u) =0, \label{continuityELK_augmented}  \\
& \partial_t (\rho u) + \Div \left( \rho u \otimes u \right) + \lambda \nabla \rho + \mu \rho u= \frac{\hbar}{2} \Div( \rho \nabla \overline{v}) , \label{fluidELK_augmented} \\
& \partial_t (\rho \overline{v}) + \Div \left( \rho \overline{v} \otimes u \right) +\frac{\hbar}{2} \Div \left( \rho \nabla u^\top \right) =0. \label{fluid2ELK_augmented} 
\end{empheq} 
\end{subequations}
Note that this augmented formulation was first introduced in \cite{bresch2015} as a numerical tool in order to compute quantum fluid systems, and is based on the identity
\[ \frac{\hbar^2}{2} \rho \nabla \left(  \frac{\Delta \sqrt{\rho}}{\sqrt{\rho}} \right) = \frac{\hbar^2}{4} \Div ( \rho \nabla^2 \log \rho ).\]
Formally differentiating in space equation \eqref{continuityELK}, we see that a solution of the ELK system \eqref{ELK} also stands as a solution of \eqref{ELK_augmented} (see Remark \ref{weak_to_augmented_remark} below). We also define the functional $ H(\rho)= \rho \log \rho - \rho$, so we can introduce the following relative entropy entropy functional for our system:
\[  \mathcal{E}_{ELK}(\rho, u,\overline{v} | R,U,\overline{V})(t)
= \frac{1}{2} \int_{\T^d} \rho (|\overline{v}-\overline{V}|^2  +|u-U|^2 )+ \lambda \int_{\T^d} H(\rho|R) + \mu \int_0^t \int_{\T^d}  \rho |u-U|^2,  \]
where 
\[H(\rho |R) := H(\rho) - H(R) - H'(R) (\rho - R) .\]
Note that here $(\rho,u,\overline{v})$ stands as a weak solution of \eqref{ELK_augmented}, where $(R,U,\overline{V})$ has to be seen as a strong solution of this system.  We will also use the convention
\[ \mathcal{E}_{ELK}(\rho, u,\overline{v}) := \mathcal{E}_{ELK}(\rho, u,\overline{v} | 1,0,0). \]

We now introduce the concept of dissipative solution, which is induced by the Gronwall inequality of our relative entropy functional as follows:
\begin{definition}
Let $T>0$ and $(\rho_0,u_0,\overline{v}_0= \frac{\hbar}{2} \nabla \log \rho_0)$ such that $\mathcal{E}_{ELK}(\rho_0,u_0,\overline{v}_0) < \infty$. We say that $(\rho,u,\overline{v})$ is a \textbf{dissipative solution} of the augmented ELK system \eqref{ELK_augmented} in $\left[ 0, T \right[ \times \T^d$ with initial data $(\rho_0,u_0,\overline{v}_0)$ if the following holds:
\[  \mathcal{E}_{ELK}(\rho, u,\overline{v} | R,U,\overline{V})(t) \leq \mathcal{E}_{ELK}(\rho, u,\overline{v} | R,U,\overline{V})(0) e^{Ct} + b_{ELK}(t) + C \int_0^t b_{ELK}(s) e^{C(t-s)} ds,  \]
for a.e. $t \in \left[ 0, T \right[$, where 
\[ b_{ELK}(t)= \int_0^t \int_{\T^d} \frac{\rho}{R} | \mathscr{E}(R,U) \cdot (U-u)|   \]
for all smooth function $U$ and $(R,\overline{V}, \mathscr{E})$ defined respectively through the initial condition
\[\int_{\T^d} R_0 = \int_{\T^d} \rho_0,  \]
and the system
\begin{subequations} \label{ELK_augmented_def}
\begin{empheq}{align}
& \partial_t R + \Div (R U) =0,   \label{continuityELK_augmented_def}  \\
& \overline{V}= \frac{\hbar}{2} \nabla \log R, \label{V_augmented_def}  \\
& \mathscr{E}(R,U)=R(\partial_t U + U \cdot \nabla U) + \lambda \nabla R + \mu R U - \frac{\hbar}{2} \Div ( R \nabla \overline{V}) , \label{fluidELK_augmented_def} 
\end{empheq} 
\end{subequations}
and $C=C(\hbar,R,U,\overline{V})$ is a constant uniformly bounded on $\R_+ \times \T^d$.
\label{dissipative_solution}
\end{definition}

\begin{remark} \label{weak_to_augmented_remark}
As we are unable to have the existence of strong solutions to the system \eqref{ELK_augmented}, we have to introduce the function $\mathscr{E}(R,U)$ which has to be understood as an error function. In fact, if $\mathscr{E}(R,U)=0$ then we recover a strong solution of \eqref{ELK_augmented} through the system \eqref{ELK_augmented_def} by differentiating in space the mass equation \eqref{continuityELK_augmented_def}, which gives
\[ \partial_t \nabla R + \nabla \Div ( R U)=\partial_t \nabla R + \Div ( \nabla(R U)^\top)=0,\]
that could be written
\[ \partial_t \nabla R + \Div ( R \nabla \log R \otimes U ) + \Div( \nabla (R U)^\top - R \nabla \log R \otimes U)=0   \]
in order to show that $\overline{V}$ fulfills equation \eqref{fluid2ELK_augmented}. Of course, we have to take into account this error term in the Gronwall inequality of Definition \ref{dissipative_solution} through the function $b_{ELK}$, whose expression will make sense with the calculations of the proof of Proposition \ref{secund_ineq}. 
\end{remark}

\begin{remark} \label{Csizar-Kullback}
We have to state at this stage that the positivity of our entropy functional $\mathcal{E}_{ELK}$ is unclear due to the presence of the logarithmic function in the isothermal contribution $\lambda \int_{\T^d} H(\rho|R)$. In order to get positivity, we have to invoke the Csiszár-Kullback inequality (see e.g. \cite{ineg_sobo_log}) which tells that for $f,g \geq 0$ such that $\int f = \int g$, we have
\[ \| f-g \|^2_{L^1(\T^d)} \leq 2 \| f \|_{L^1(\T^d)} \int_{\T^d} f(x) \log \left( \frac{f(x)}{g(x)} \right) dx.  \]
Recalling that $\int r_0 = \int \rho_0$ and using the continuity equations \eqref{continuityELK_augmented_def} and \eqref{continuityELK_augmented} that ensures that $\forall t \in \left[0,T \right[$, $\| \rho(t) \|_{L^1(\T^d)} = \| R(t) \|_{L^1(\T^d)}$, we get that 
\[ \int_{\T^d} H(\rho |R) = \int_{\T^d} \left( \rho \log \rho  - \rho -( R \log R -R) - \log R (\rho -R) \right) = \int_{\T^d} \rho \log \left( \frac{\rho}{R} \right) \geq 0.  \]
Under the assumption $\lambda >0$, we then recover the positivity of $\mathcal{E}_{ELK}$, which will be crucial in the proof of Theorem \ref{gronwall_strong}. Note that in the focusing case $\lambda <0$, Csiszár-Kullback inequality does not ensure positivity on our relative entropy anymore, hence we cannot define a proper notion of dissipative solution in the focusing case and apply the results from Section 3. However, all the results from Section 2 are still valid in the case $\lambda <0$ (see Remark \ref{remark_focusing_case} below).
\end{remark}

We can now state the main result of this paper:
\begin{theorem} \textbf{(Global existence for the augmented isothermal ELK).} \\
Let $T>0$ and $(\rho_0,u_0,\overline{v}_0= \frac{\hbar}{2} \nabla \log \rho_0)$ such that $\mathcal{E}_{ELK}(\rho_0,u_0,\overline{v}_0) < \infty$, then there exists a dissipative solution of the augmented ELK system \eqref{ELK_augmented} in $\left[ 0, T \right[ \times \T^d$ with initial data $(\rho_0,u_0,\overline{v}_0)$.
\label{main_theo}
\end{theorem}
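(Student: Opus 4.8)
The plan is to realize the dissipative solution as the inviscid limit $\nu \to 0$ of global weak solutions of the regularized Navier--Stokes--Langevin--Korteweg system \eqref{NSLK}, the entire argument being organized around the relative entropy $\mathcal{E}_{ELK}$. First I would fix $\nu > 0$ and appeal to the existence theory of Section 2 (the adaptation of the standard approximation scheme driven by the energy and BD-entropy estimates), which produces for each $\nu$ a global weak solution $(\rho^\nu, u^\nu, \overline{v}^\nu)$ of the augmented NSLK system with the prescribed initial data. The delicate point here, in contrast with the barotropic setting, is that the isothermal pressure law yields an energy and a BD-entropy with no definite sign; positivity is recovered only after invoking the Csisz\'ar--Kullback inequality of Remark \ref{Csizar-Kullback}, which I would use to bound $\lambda \int_{\T^d} H(\rho^\nu | R)$ from below by $\|\rho^\nu - R\|_{L^1(\T^d)}^2$ and thereby close the a priori bounds uniformly in $\nu$.

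Second, I would establish the viscous analogue of the relative-entropy estimate (Proposition \ref{secund_ineq}). Comparing $(\rho^\nu, u^\nu, \overline{v}^\nu)$ with an arbitrary smooth triple $(R, U, \overline{V})$ built from \eqref{ELK_augmented_def}, one differentiates $\mathcal{E}_{ELK}(\rho^\nu, u^\nu, \overline{v}^\nu \,|\, R, U, \overline{V})$ in time and rearranges using the weak formulations of the mass equation \eqref{continuityELK_augmented}, the momentum equation \eqref{fluidELK_augmented} (with its $\nu$-viscous term) and the augmented equation \eqref{fluid2ELK_augmented}. All cross terms not already controlled by $\mathcal{E}_{ELK}$ collect into (i) the error contribution $b_{ELK}$ coming from $\mathscr{E}(R,U) \neq 0$, and (ii) a viscous remainder proportional to $\nu$ that the uniform estimates keep bounded. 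A Gronwall argument with $C = C(\hbar, R, U, \overline{V})$ then gives
\[
\mathcal{E}_{ELK}^\nu(t) \leq \mathcal{E}_{ELK}^\nu(0)\, e^{Ct} + b_{ELK}(t) + C\int_0^t b_{ELK}(s)\, e^{C(t-s)}\, ds + \nu\, r^\nu(t),
\]
with $r^\nu$ uniformly bounded on $[0,T[$.

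Third comes the inviscid limit. The uniform bounds furnish limits $\rho^\nu \to \rho$ (strongly, via the $\nabla\sqrt{\rho^\nu}$ control coming from the BD-entropy) together with $\sqrt{\rho^\nu}\, u^\nu \rightharpoonup \sqrt{\rho}\, u$ and $\sqrt{\rho^\nu}\, \overline{v}^\nu \rightharpoonup \sqrt{\rho}\, \overline{v}$ weakly in $L^2$, and I would verify that the limit is a weak solution of the augmented ELK system \eqref{ELK_augmented}. The crucial stability property is the weak lower semicontinuity of $\mathcal{E}_{ELK}(\cdot \,|\, R, U, \overline{V})$ under these convergences --- convexity of $H$ and of the quadratic kinetic and quantum terms --- combined with the convergence of the initial relative entropy and the vanishing of the $\nu\, r^\nu$ defect. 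Passing to the limit in the inequality above then reproduces exactly the bound of Definition \ref{dissipative_solution}, so $(\rho, u, \overline{v})$ is the desired dissipative solution, which is the content of Theorem \ref{gronwall_strong}.

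I expect the main obstacle to be Steps 2--3 together: isolating, inside the time derivative of $\mathcal{E}_{ELK}$, precisely the structure $b_{ELK} + C\int_0^t b_{ELK}\,e^{C(t-s)}\,ds$ while showing that every remaining term is either absorbed by $\mathcal{E}_{ELK}$ itself (to feed the Gronwall loop) or is an $O(\nu)$ viscous defect. The augmented variable $\overline{v}$ is what linearizes the Bohm potential, but its equation \eqref{fluid2ELK_augmented} holds only modulo the commutator identified in Remark \ref{weak_to_augmented_remark}; keeping track of that commutator and verifying that it too is dominated by $\mathcal{E}_{ELK}$ after substituting $\overline{V} = \frac{\hbar}{2}\nabla \log R$ is the technical heart of the argument. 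Ensuring that the lower semicontinuity survives the sign-indefinite isothermal contribution --- again through Csisz\'ar--Kullback --- is the other place where genuine care is required.
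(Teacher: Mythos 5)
Your overall skeleton (NSLK existence from Section 2, a relative-entropy Gronwall inequality, inviscid limit by lower semicontinuity) matches the paper, but your Step 2 has a genuine gap at exactly the point you identify as the technical heart. You propose to run the relative-entropy estimate directly on $\mathcal{E}_{ELK}$ in the variables $(u^\nu,\overline{v}^\nu)$ and to collect the viscosity into a remainder $\nu\, r^\nu(t)$ with $r^\nu$ uniformly bounded. The uniform estimates do not give this: the energy inequality \eqref{eq_energy_NSLK} only controls $\nu\int_0^T\int_{\T^d}\rho^\nu|\mathbb{D}u^\nu|^2$, so $\sqrt{\rho^\nu}\,\mathbb{D}u^\nu$ has $L^2$-norm of order $\nu^{-1/2}$; the viscous cross term $\nu\int\sqrt{\rho^\nu}\,\mathbb{S}_N(u^\nu):\nabla U$ is then at best $O(\sqrt{\nu})$, and $r^\nu$ is not uniformly bounded. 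Moreover, at weak-solution regularity you cannot simply "test with $u^\nu-U$", and the Korteweg cross terms involve $\mathbb{T}_N(u^\nu)$, which is likewise only $O(\nu^{-1/2})$ in $L^2$, so these terms must be recombined by the antisymmetric algebra of Propositions \ref{first_ineq}--\ref{secund_ineq} rather than estimated crudely. The paper's actual device, absent from your plan, is the exact change of unknowns $w = u + \frac{\nu}{2}\nabla\log\rho$, $\hbar_\nu=\sqrt{\hbar^2-\nu^2}$, $\lambda'=\lambda-\frac{\mu\nu}{2}$ (the Langevin term being absorbed into the isothermal pressure, Remark \ref{remark_lambda_pos}), under which the BD entropy \eqref{eq_BD_entropy_NSLK} — which you invoke only for compactness, not for the Gronwall step — becomes the energy of the augmented system \eqref{NSLK_augmented}, and its dissipation $\frac{\nu}{2}\int\rho(|\nabla w|^2+|\nabla\overline{v}|^2)$ is what absorbs the viscous cross terms in $I_1$. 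Theorem \ref{gronwall_strong} then holds with constant $C(1+\nu/\hbar_\nu)$, and in the limit the comparison functions are shifted to $W^\nu = U + \frac{\nu}{2}\nabla\log R$, so that the only viscous defect is $\mathscr{E}^\nu(R,U)-\mathscr{E}(R,U) = -\nu\,\Div(R\,\mathbb{D}U)$, a quantity built from the smooth test functions alone, which vanishes trivially inside $b_{NSLK}^\nu$. This exact absorption, not an $O(\nu)$ remainder estimate on the weak solution, is what makes the argument close.

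Your Step 3 also overreaches in a way the dissipative framework is designed to avoid: you claim to "verify that the limit is a weak solution of the augmented ELK system \eqref{ELK_augmented}". The available convergences (weak-$L^2$ of $\sqrt{\rho^\nu}w^\nu$ and $\sqrt{\rho^\nu}\overline{v}^\nu$) are insufficient to pass to the limit in $\rho u\otimes u$ or in the $\sqrt{\rho}\,\mathbb{T}_N$-terms; this obstruction is precisely why Definition \ref{dissipative_solution} requires only the Gronwall-type inequality, and the paper passes to the limit only in that inequality — lower semicontinuity of $\mathcal{E}_{NSLK}$ on the left (where Csisz\'ar--Kullback, Remark \ref{Csizar-Kullback}, guarantees positivity of the relative entropy, while the lower bound on the energy needed for the existence theory is the separate argument of Remark \ref{focusing_remark}), and convergence of $b_{NSLK}^\nu$ to $b_{ELK}$ on the right. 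Finally, a labeling point: the conclusion of the limit passage is the unnumbered theorem of Section 3.2, not Theorem \ref{gronwall_strong}, which is the fixed-$\nu$ Gronwall inequality.
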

The rest of this paper is devoted to the proof of Theorem \ref{main_theo}, and is organized as follows. In Section 2, we introduce several regularizing viscous terms in the ELK system which enable us to construct a solution of our regularized system. Letting these several viscosity terms one by one go to zero, we recover the existence of a weak solution to the Navier-Stokes-Langevin-Korteweg equation, which corresponds to the ELK system with an additional term $\nu \Div \left( \rho \mathbb{D} u \right)$ in the momentum equation. Section 3 is then dedicated to the inviscid limit $\nu \rightarrow 0$, where we have to introduce augmented formulation for both ELK and NSLK systems. Finally we give in Appendix the definition of the operators and some technical lemmas which will be used all along this paper.

\section{The isothermal Navier-Stokes-Langevin-Korteweg system}

We look at the Navier-Stokes-Langevin-Korteweg system (denoted NSLK in the following for reader convenience), namely:
\begin{subequations} \label{NSLK}
\begin{empheq}[left=\empheqlbrace]{align}
& \partial_t \rho + \Div (\rho u) =0, \label{continuityNSLK}  \\
& \partial_t (\rho u) + \Div \left( \rho u \otimes u \right) + \lambda \nabla \rho + \mu \rho u= \frac{\hbar^2 }{2} \rho \nabla \left( \frac{\Delta \sqrt{\rho}}{\sqrt{\rho}} \right) + \nu \Div \left( \rho \mathbb{D} u \right), \label{fluidNSLK} 
\end{empheq} 
\end{subequations}
where $\mathbb{D} u = ( \nabla u + \nabla u ^\top)/2$. This system formally enjoys the following energy estimate:
\begin{equation}  E_{NSLK}(\rho,u)(t) +  \int_0^t D_{NSLK}(\rho,u)(s) ds \leq E_0, \label{eq_energy_NSLK} \end{equation}
with
\begin{gather}
    E_{NSLK}(\rho,u)=\frac{1}{2} \int_{\T^d} \left(  \rho |u|^2  +\hbar^2 |\nabla \sqrt{\rho} |^2 \right) + \lambda \int_{\T^d} H(\rho) , \label{energy_NSLK} \\ 
    D_{NSLK}(\rho,u) = \mu \int_{\T^d} \rho |u|^2  + \nu \int_{\T^d} \rho | \mathbb{D} u |^2. \label{dissipation_NSLK}
\end{gather}
\begin{remark} \label{focusing_remark}
Similarly to Remark \ref{Csizar-Kullback}, in order to use \eqref{eq_energy_NSLK} to get some regular bounds on our solutions, we need to show that $E_{NSLK}$ is also bounded from below, which is not obvious due to the presence of the logarithmic term $\lambda \int_{\T^d} H(\rho)$. Denoting $E(t)+ \lambda \| \rho_0 \|_{L^1}=E^+(t)-E^-(t)$, with
\begin{gather*}
    E^+(t)=\int_{\T^d} \left( \frac{1}{2} \rho |u|^2 +  \hbar^2 |\nabla \sqrt{\rho} |^2  \right) dx + \lambda \int_{\rho > 1} \rho \log \rho \geq 0,\\
    E^-(t) = -\lambda \int_{\rho < 1} \rho \log \rho = \lambda \int_{\rho < 1} \rho \log\left(\frac{1}{\rho} \right)\geq 0,
\end{gather*}
we remark that $E^-(t)$ is controlled by
\[ \int_{\rho < 1} \rho \log \left(\frac{1}{\rho} \right) \lesssim \int_{\T^d} \rho^{1-\eps} \leq C \| \rho_0 \|_{L^1}^{1-\eps} \]
by the Hölder inequality on the compact set $\T^d$ with $\eps >0$ small enough, hence $E^-(t)$ and $E^+(t) \in L^{\infty}$, so the energy $E_{NSLK}$ is indeed bounded from below.
\end{remark}

The BD-entropy of a quantum fluid system, firstly introduced in \cite{bresch2003} and \cite{bresch2004}, is now a classical tool in order to get further regularity bounds on our solution. This system also enjoys a BD-entropy estimate:
\begin{equation}  \mathcal{E}_{NSLK}(\rho,u)(t) +  \int_0^t \mathcal{D}_{NSLK}(\rho,u)(s) ds \leq \mathcal{E}_0, \label{eq_BD_entropy_NSLK} \end{equation}
with
\begin{gather}
    \mathcal{E}_{NSLK}(\rho,u)=\frac{1}{2} \int_{\T^d} \left(  \rho |u+\frac{\nu}{2} \nabla \log \rho|^2  +\hbar^2 |\nabla \sqrt{\rho} |^2 \right) + \lambda' \int_{\T^d} H(\rho) , \label{energy_BD_entropy_NSLK} \\ 
    \mathcal{D}_{NSLK}(\rho,u) = \mu \int_{\T^d} \rho |u|^2  + \frac{\nu}{2} \int_{\T^d} \rho | \mathbb{A} u |^2 + \frac{\nu \hbar^2}{2}  \int_{\T^d} \rho | \nabla^2 \log \rho |^2 + 2 \nu  \int_{\T^d}|\nabla \sqrt{\rho} |^2, \label{dissipation_BD_entropy_NSLK}
\end{gather}
where $\lambda':=\lambda - \mu \nu$ and $\mathbb{A}u= (\nabla u - \nabla u^\top)/2$. We now give the notion of weak solution induced by these quantities:
\begin{definition} 
Let $T >0$ and $(\rho_0,u_0)$ such that $E_{NSLK}(\rho_0,u_0) < \infty$. We say that $(\rho,u)$ is a \textbf{weak solution} of the NSLK system \eqref{NSLK} in $\left[ 0,T \right[ \times \T^d$ with initial data $(\rho_0,u_0)$, if there exists locally integrable functions $\sqrt{\rho}$, $\sqrt{\rho}u $ such that, by defining $\rho := \sqrt{\rho}^2$ and $\rho u:= \sqrt{\rho} \sqrt{\rho}u$, the following holds:

\begin{enumerate}[label=(\roman*)]

    \item The global regularity:
    \[ \sqrt{\rho} \in L^{\infty}(\left[ 0,T \right[;H^1(\T^d)), \ \ \ 
      \sqrt{\rho} u  \in L^{\infty}(\left[ 0,T \right[;L^2(\T^d)),  \]
      with the compatibility condition
      \[ \sqrt{\rho} \geq 0 \text{ a.e. on } (0,\infty) \times \T^d, \ \ \ 
      \sqrt{\rho} u =0 \text{ a.e. on } \left\{ \rho=0 \right\}. \]
      
      \item For any test function $\eta \in \mathcal{C}_0^{\infty}(\left[ 0,T \right[ \times \T^d)$, 
      
      \begin{equation} \label{continuity_weak_NSLK}      
      \int_0^T \int_{\T^d} (\rho \partial_t \eta + \rho u \cdot \nabla \eta )dx dt + \int_{\T^d} \rho_0 \eta(0) dx =0, \end{equation} 
      and for any test function $\zeta \in \mathcal{C}_0^{\infty}(\left[ 0,T \right[ \times \T^d; \T^d)$,
      \begin{multline} \int_0^T \int_{\T^d} \left( \rho u \cdot \partial_t \zeta + \sqrt{\rho} u \otimes \sqrt{\rho} u : \nabla \zeta +\lambda \rho \Div (\zeta) - \mu \rho u \cdot \zeta     + \hbar^2  \nabla \sqrt{\rho} \otimes \nabla \sqrt{\rho} : \nabla \zeta \right. \\ \left. - \frac{\hbar^2 }{4} \rho \Delta \Div \zeta- \nu \sqrt{\rho} \mathbb{S}_N(u) \cdot \nabla \zeta \right) dx dt  
      +  \int_{\T^d} \rho_0 u_0 \zeta(0) dx =0, \label{fluid_weak_NSLK} \end{multline}
      with $ \mathbb{S}_N(u)= (\mathbb{T}_N(u) + \mathbb{T}_N(u)^\top)/2$, where $\mathbb{T}_N(u) $ is defined through the compatibility condition
      \[ \sqrt{\rho} \ \mathbb{T}_N(u) = \nabla( \sqrt{\rho} \sqrt{\rho} u ) - 2  \sqrt{\rho} u \otimes \nabla \sqrt{\rho} .\]      
      
      \item  For any test function $\xi \in \mathcal{C}_0^{\infty}(\T^d)$, 
      \[ \lim\limits_{\substack{t \to 0 }} \int_{\T^d} \rho(t,x) \xi(x) dx =  \int_{\T^d} \rho_0(t,x) \xi(x) dx,\]
      \[ \lim\limits_{\substack{t \to 0 }} \int_{\T^d} \rho(t,x) u(t,x) \xi(x) dx =  \int_{\T^d} \rho_0(t,x) u_0(t,x) \xi(x) dx.\]
\end{enumerate}
\label{weak_NSLK}
\end{definition}
\begin{remark}
The tensor valued function $\mathbb{T}_N(u)$ has to be understood as $\sqrt{\rho} \nabla u$. Of course, we can not define this term properly, so we use the algebraic identity given in the previous definition, which makes sense in the distribution sense in view of the regularity assumption.
\end{remark}

\begin{remark}
 Note that we have used in the momentum equation \eqref{fluid_weak_NSLK} the identity
 \[ \frac{\hbar^2}{2} \rho \left( \frac{\Delta \sqrt{\rho}}{\sqrt{\rho}} \right) = \frac{\hbar^2}{4} \Delta \nabla \rho - \hbar^2 \Div ( \nabla \sqrt{\rho} \otimes  \nabla \sqrt{\rho} ).   \]
\end{remark}

The aim of this section is now to prove the following theorem:
\begin{theorem} \textbf{(Global existence for the isothermal NSLK).} \label{exsitence_NSLK}\\
Let $T >0$, $\nu >0$ and $(\rho_0,u_0)$ such that $E_{NSLK}(\rho_0,u_0) < \infty$, then there exists a weak solution of system \eqref{continuityNSLK}-\eqref{fluidNSLK} in $\left[ 0,T \right[ \times \T^d$ with initial data $(\rho_0,u_0)$. Furthermore, for almost every $t \in \left[0,T\right[$, equations \eqref{eq_energy_NSLK} and \eqref{eq_BD_entropy_NSLK} holds.
\label{prop_NSLK}
\end{theorem}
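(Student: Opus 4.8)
The plan is to construct the weak solution of the NSLK system \eqref{NSLK} by a multi-layer regularization scheme followed by successive compactness limits, exploiting the two a priori estimates \eqref{eq_energy_NSLK} and \eqref{eq_BD_entropy_NSLK}. First I would introduce a fully regularized approximate system on $\T^d$: besides the physical viscosity $\nu \Div(\rho \mathbb{D}u)$, I would add a small artificial density diffusion $\eps \Delta \rho$ in the continuity equation to keep the density bounded away from zero (preventing the logarithmic pressure $\lambda\nabla\rho$ and the Bohm term from degenerating), a higher-order regularization such as $-\delta \Delta^{2m} u$ (or $\delta \rho \nabla \Delta^{2s}\rho$) giving extra control on the velocity/density, and possibly a drag/friction cutoff. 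On this regularized system the velocity equation is a quasilinear parabolic equation for $u$ with strictly positive density, so I would solve it by a fixed-point/Galerkin argument in time, obtaining smooth approximate solutions $(\rho_{\eps,\delta},u_{\eps,\delta})$ on $[0,T[$. The isothermal pressure is handled via $H(\rho)=\rho\log\rho-\rho$, whose convexity gives the correct pressure contribution in the energy.

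The second step is to derive the energy and BD-entropy estimates uniformly in the regularization parameters. Here the crucial point, already flagged in Remark \ref{focusing_remark}, is that the isothermal contribution $\lambda\int_{\T^d}H(\rho)$ has no sign; I would use the $L^1$-mass conservation together with the Hölder bound $\int_{\rho<1}\rho\log(1/\rho)\lesssim\|\rho_0\|_{L^1}^{1-\eps}$ to control the negative part $E^-(t)$ and conclude that $E_{NSLK}$ is bounded from below. This turns \eqref{eq_energy_NSLK} into genuine uniform bounds: $\sqrt{\rho}\in L^\infty_tH^1_x$, $\sqrt{\rho}u\in L^\infty_tL^2_x$, plus the dissipation integrals $\mu\int\rho|u|^2$ and $\nu\int\rho|\mathbb{D}u|^2$. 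I would then reproduce the BD-entropy computation (testing the momentum equation against $\nabla\log\rho$ and combining with the continuity equation) to obtain the additional gradient bounds in \eqref{dissipation_BD_entropy_NSLK}, in particular $\sqrt{\nu}\,\rho^{1/2}\nabla^2\log\rho\in L^2_{t,x}$ and $\sqrt{\nu}\,\nabla\sqrt{\rho}\in L^2_{t,x}$, with $\lambda'=\lambda-\mu\nu$ still positive for $\nu$ small; the same Csiszár--Kullback/Hölder argument guarantees the lower bound here as well.

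The third step is to pass to the limit, removing the regularizations one parameter at a time (first $\delta\to0$, then $\eps\to0$) while $\nu>0$ stays fixed. From the uniform estimates I would extract weak-$*$ limits of $\sqrt{\rho}$, $\nabla\sqrt{\rho}$, and $\sqrt{\rho}u$. The main obstacle, as usual for compressible quantum fluids, is the identification of the nonlinear terms $\sqrt{\rho}u\otimes\sqrt{\rho}u$ and the Korteweg/Bohm tensor in the distributional momentum equation \eqref{fluid_weak_NSLK}: weak convergence alone does not pass to products. I would therefore establish strong $L^2$ (or $L^p$) compactness of $\sqrt{\rho}$ via an Aubin--Lions argument, using the continuity equation to bound $\partial_t\rho$ in a negative-order space and the spatial $H^1$ bound from the energy; strong convergence of $\sqrt{\rho}$ then lets me define the limiting momentum $\sqrt{\rho}u$ and the tensor $\mathbb{T}_N(u)$ through the compatibility relation $\sqrt{\rho}\,\mathbb{T}_N(u)=\nabla(\sqrt{\rho}\sqrt{\rho}u)-2\sqrt{\rho}u\otimes\nabla\sqrt{\rho}$, exactly as in Definition \ref{weak_NSLK}. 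The viscous term $\nu\sqrt{\rho}\,\mathbb{S}_N(u)$ and the convective term are recovered by combining this strong convergence with the weak convergence of $\sqrt{\rho}u$, and the compatibility conditions $\sqrt{\rho}\geq0$, $\sqrt{\rho}u=0$ on $\{\rho=0\}$ follow in the limit.

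Finally, I would verify the initial conditions in the weak sense of item (iii) using the uniform bound on $\partial_t\rho$ and $\partial_t(\rho u)$ to get weak continuity in time, and confirm that the energy and BD-entropy inequalities survive the limits by weak lower semicontinuity of the convex functionals involved (norms and $H$). I expect the hardest technical points to be the stability of the third-order Bohm term together with the degenerate viscosity $\rho\mathbb{D}u$ — both require the BD-entropy bounds rather than the energy alone — and the careful handling of the signless logarithmic pressure throughout every limit, which is what distinguishes the isothermal case treated here from the barotropic setting of \cite{antonelli2009} and \cite{Bresch2019}.
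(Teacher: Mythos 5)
Your overall roadmap (multi-parameter regularization, Faedo--Galerkin, uniform energy and BD-entropy bounds with the Csisz\'ar--Kullback/H\"older control of the signless isothermal term, then successive limits with Aubin--Lions compactness) matches the paper's strategy, and your treatment of the lower bound on $E_{NSLK}$ and of the BD-entropy derivation is exactly right. But there is a genuine gap at the last and hardest stage. You list the drag terms as optional (``possibly a drag/friction cutoff'') and never explain how to remove them, whereas in the paper they are structurally essential and their removal is the step that requires a new idea. With $r_0, r_1 > 0$ one has $u \in L^2$ as an actual function (this is what makes the intermediate limits $\delta_1,\delta_2 \to 0$ and $\eta_1,\eta_2 \to 0$ workable), but in the limit $r_0, r_1 \to 0$ the velocity is no longer defined on the vacuum set $\{\rho = 0\}$, and passing to the limit in $\sqrt{\rho}u \otimes \sqrt{\rho}u$ faces a concentration problem that Aubin--Lions on $\sqrt{\rho}$ alone does not resolve. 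The paper handles this by invoking the \emph{renormalized weak solutions} of \cite{vasseur2018} (Definition \ref{renormalised_NSLK} and Lemma \ref{renormalized_lemma}): the momentum equation is tested against $\varphi(u)$ for bounded $\varphi \in W^{2,\infty}$, with measure defects $f_\varphi, g_\varphi$ controlled by $\|\nabla^2\varphi\|_{L^\infty}$, which truncates the velocity and allows the limit $\rho_r^{\alpha}\varphi(u_r) \to \rho^{\alpha}\varphi(u)$ via a.e.\ convergence; one then recovers an ordinary weak solution from the renormalized one at $r_0 = r_1 = 0$. Your proposal contains no substitute for this device.

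Relatedly, your claim that the convective and viscous terms are ``recovered by combining this strong convergence \textup{[of $\sqrt{\rho}$]} with the weak convergence of $\sqrt{\rho}u$'' does not work as stated: a product of two weakly convergent factors ($\sqrt{\rho}u \otimes \sqrt{\rho}u$) does not pass to the limit. The paper instead obtains \emph{strong} convergence of the momentum $\rho_r u_r$ in $L^p([0,T[;L^q(\T^d))$ by a second Aubin--Lions argument, bounding $\nabla(\rho_r u_r)$ in $L^2_t L^{3/2}_x$ through the identity $\nabla(\rho_r u_r) = \sqrt{\rho_r}\,\mathbb{T}_N(u_r) + 2\nabla\sqrt{\rho_r} \cdot \sqrt{\rho_r} u_r$ and $\partial_t(\rho_r u_r)$ in $L^2_t H^{-N}_x$ from the momentum equation; this yields a.e.\ convergence of $u_r$ on $\{\rho > 0\}$, which feeds the renormalized formulation above. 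Without these two ingredients --- the strong momentum compactness and the renormalization framework --- your plan cannot close the final limit, so the proof as proposed is incomplete precisely where the isothermal NSLK existence theorem is nontrivial beyond the barotropic literature.
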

\begin{remark} \label{remark_focusing_case}
Note that Theorem \ref{exsitence_NSLK} still holds in the focusing case $\lambda <0$. In fact, to adapt the proof, we just have to show that $E_{NSLK}$ is bounded from below similarly to Remark \ref{focusing_remark}. Writing $E_{NSLK}(t)=E^+(t)-E^-(t)$, where
\begin{gather*}
    E^+(t)=\int_{\T^d} \left( \frac{1}{2} \rho |u|^2 +  \hbar^2 |\nabla \sqrt{\rho} |^2 - \lambda \rho \right) dx + \lambda \int_{\rho < 1} \rho \log \rho \geq 0,\\
    E^-(t) = -\lambda \int_{\rho > 1} \rho \log \rho \geq 0,
\end{gather*}
and using the conservation of mass $\| \rho(t) \|_{L^1}=\| \rho_0 \|_{L^1}$, we get that
\[ \int_{\rho > 1} \rho \log \rho \lesssim \int_{\T^d} \sqrt{\rho}^{2+\eps} \leq \| \sqrt{\rho} \|_{L^2}^{(2+\eps)(1-\alpha)}  \| \nabla \sqrt{\rho} \|_{L^2}^{(2+\eps)\alpha} =  C \| \nabla \sqrt{\rho} \|_{L^2}^{(2+\eps)\alpha} \]
by the Gagliardo-Nirenberg inequality with $\alpha=1/2-1/(2+\eps)$, for $\eps >0$ small enough (see Lemma \ref{gagliardo_nirenberg}). Finally,
\[ E^+(t) \leq E_0 +E^-(t) \leq E_0 + C E^+(t)^{\delta} \]
with $\delta <1$, inducing that $E^+(t) \in L^{\infty}$ and so $E^-(t)$. Unfortunately, we are unable to define a positive relative entropy functional in the focusing case, so we can't make the viscous limit $\nu \rightarrow 0$ in this case.\\
\end{remark}

\subsection{Regularized NSLK system}
Following \cite{vasseur2016}, we first aim at proving the existence of a solution to the following regularized NSLK system:
\begin{subequations} \label{regularized_NSLK}
\begin{empheq}[left=\empheqlbrace]{align}
& \partial_t \rho + \Div (\rho u) = \delta_1 \Delta \rho, \label{continuity_regularized_NSLK}  \\
&  \partial_t (\rho u) + \Div \left( \rho u \otimes u \right) + \lambda \nabla \rho + \mu \rho u + r_0 u + r_1 \rho |u|^2u = \frac{\hbar^2 }{2} \rho \nabla \left( \frac{\Delta \sqrt{\rho}}{\sqrt{\rho}} \right) + \nu \Div \left( \rho \mathbb{D} u \right)  \\ 
&  + \delta_2 \Delta^2 u - \delta_1 (\nabla \rho \cdot \nabla) u + \eta_1 \nabla \rho^{-\alpha} + \eta_2 \rho \nabla \Delta^{2s+1} \rho, \notag
 \label{fluid_regularized_NSLK}
\end{empheq} 
\end{subequations}
where the regularization parameters verify $0 < \delta_1, \delta_2, \eta_1, \eta_2, r_0, r_1 <1$ and $\alpha > 0$, $s \in \mathbb{N}^*$ are chosen sufficiently large (to be fixed later on). Integrating equation \eqref{continuity_regularized_NSLK} in space, we get the conservation of mass, namely for all $t \geq 0$,
\begin{equation} \label{conservation_mass}
    \int_{\T^d} \rho(t,x) dx = \int_{\T^d} \rho_0(x) dx.
\end{equation}

Then, multiplying formally equation \eqref{fluid_regularized_NSLK} with $u$ and combining it with equation \eqref{continuity_regularized_NSLK} we get the energy estimate, for almost all $ t \geq 0$:
\begin{equation} \label{eq_energy_regularized_NSLK}
E_{\reg}(\rho,u)(t) +  \int_0^t D_{\reg}(\rho,u)(s) ds \leq E_0,  
\end{equation}
where
\begin{equation}
    E_{\reg}(\rho,u)=\frac{1}{2} \int_{\T^d} \left(  \rho |u|^2  +\hbar^2 |\nabla \sqrt{\rho} |^2 \right) + \lambda \int_{\T^d} H(\rho)  + \frac{\eta_1}{\alpha + 1} \int_{\T^d} \rho^{-\alpha} + \frac{\eta_2}{2} \int_{\T^d} | \nabla \Delta^s \rho |^2, \label{energy_regularized_NSLK} 
\end{equation}
 \begin{multline}
    D_{\reg}(\rho,u) = \mu \int_{\T^d} \rho |u|^2  + \nu \int_{\T^d} \rho | \mathbb{D} u |^2 + \delta_2 \int_{\T^d} | \Delta u|^2 + \delta_1 \eta_2 \int_{\T^d} | \Delta^{s+1} \rho|^2 \\
    + 4 \delta_1 \int_{\T^d} |\nabla \sqrt{\rho}|^2 + \frac{4 \delta_1 \eta_1}{\alpha} \int_{\T^d} |\nabla \rho^{-\alpha/2}|^2 + r_0 \int_{\T^d}|u|^2 + r_1 \int_{\T^d} \rho|u|^4 + \frac{\delta_1 \hbar^2}{2} \int_{\T^d} \rho |\nabla^2 \log \rho|^2. 
   \label{dissipation_regularized_NSLK}
 \end{multline}   
 
 \begin{definition} 
Let $T >0$. We say that $(\rho,u)$ is a \textbf{weak solution} of the regularized NSLK system \eqref{regularized_NSLK} in $\left[ 0,T \right[ \times \T^d$ with initial data $(\rho_0,u_0) \in L^1(\T^d) \times  L^2(\T^d)$, if the following holds:

\begin{enumerate}[label=(\roman*)]

    \item The global regularity:
    \[ \rho \in H^1(\left[0,T\right[; H^1(\T^d)) \cap \mathcal{C}^0 (\left[0,T\right[; H^{2s}(\T^d)) \cap L^2(\left[0,t \right[; H^{2s+2}(\T^d)), \]
    \[  1/\rho \in \mathcal{C}^0(\left[0,T\right[ \times \T^d), \]
    \[ u \in L^{\infty}(\left[0,T \right[; L^2(\T^d)) \cap L^2(\left[0,T \right[; H^2(\T^d)). \]
      
      \item For any test function $\eta \in \mathcal{C}_0^{\infty}(\left[ 0,T \right[ \times \T^d)$, 
      
      \begin{equation} \label{continuity_regularized_NSLK_eq}
      \int_0^T \int_{\T^d} (\rho \partial_t \eta + \rho u \cdot \nabla \eta +\delta_1 \rho \Delta \eta )dx dt + \int_{\T^d} \rho_0 \eta(0) dx =0, 
      \end{equation} 
      and for any test function $\zeta \in \mathcal{C}_0^{\infty}(\left[ 0,T \right[ \times \T^d; \T^d)$,
      \begin{multline} \int_0^T \int_{\T^d} \left( \rho u \cdot \partial_t \zeta + \rho u \otimes  u : \nabla \zeta +\lambda \rho \Div (\zeta) - \mu \rho u \cdot \zeta  + \hbar^2  \nabla \sqrt{\rho} \otimes \nabla \sqrt{\rho} : \nabla \zeta - \frac{\hbar^2 }{4} \rho \Delta \Div \zeta \right. \\ \left. - \nu \rho \mathbb{D} u: \nabla \zeta - r_0 u \cdot \zeta - r_1 \rho|u|^2 u \cdot \zeta  - \delta_1 \nabla u : \nabla \rho \otimes \zeta - \delta_2 \Delta u \cdot \delta \zeta - \eta_1 \rho^{-\alpha} \Div \zeta   \right. \\
      \left. - \eta_2 \Delta^{s+1} \rho \Delta^s \left[ \nabla \rho \cdot \zeta + \rho \Div \zeta   \right] \right) dx dt   +  \int_{\T^d} \rho_0 u_0 \zeta(0) dx =0.
      \label{fluid_regularized_NSLK_eq}
      \end{multline} 
         
\end{enumerate}
\label{weak_regularized_NSLK}
\end{definition}
 
 \begin{proposition} \label{regularized_solution_prop}
 Let $T>0$ and
 \begin{equation} \label{CI_regularized}
\rho_0 \in \mathcal{C}^{\infty}_0 ( \T^d), \ \ \ u_0 \in L^2(\T^d), \ \ \ \inf_{x \in \T^d} \rho_0(x) \geq \theta > 0,  
 \end{equation} 
 then there exists a weak solution $(\rho,u)$ of system \ref{regularized_NSLK} in  $\left[ 0,T \right[ \times \T^d$ with initial data $(\rho_0,u_0)$ which satisfies moreover the conservation of mass \eqref{conservation_mass} and the energy estimate \eqref{eq_energy_regularized_NSLK}.
 \end{proposition}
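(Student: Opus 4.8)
The plan is to build the solution by a Faedo--Galerkin scheme for the velocity coupled to an exact resolution of the (parabolic) continuity equation, to close the approximation by a fixed-point argument, and finally to let the Galerkin dimension tend to infinity using the energy estimate \eqref{eq_energy_regularized_NSLK}; this follows the strategy of \cite{vasseur2016}. Since the regularization parameters $\delta_1,\delta_2,\eta_1,\eta_2,r_0,r_1$ are here frozen and strictly positive, the system is uniformly parabolic in both unknowns, so one should expect smooth approximate solutions, the whole issue being to keep the scheme compatible with the strict positivity of $\rho$.

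First I would fix the orthonormal basis $(e_k)_k$ of $L^2(\T^d)$ made of the eigenfunctions of $-\Delta$ and set $X_N=\spn(e_1,\dots,e_N)$. Given a velocity $v\in C([0,T^*];X_N)$, which is automatically smooth in $x$, the continuity equation \eqref{continuity_regularized_NSLK} is a linear uniformly parabolic equation for $\rho$ with smooth coefficients; standard parabolic theory provides a unique smooth solution $\rho=S(v)$ with $\rho(0)=\rho_0$, and the maximum principle propagates $\inf\rho_0\geq\theta$ into a strictly positive (time-dependent) lower bound, so that $1/\rho$, $\rho^{-\alpha}$ and $\log\rho$ all make sense. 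I would then freeze $\rho=S(v)$ in the momentum equation and solve it for $u\in X_N$ in the weak form \eqref{fluid_regularized_NSLK_eq} tested against $e_1,\dots,e_N$: since $\rho$ is bounded below, the mass matrix $\big(\int_{\T^d}\rho\,e_j\cdot e_k\big)_{j,k}$ is invertible, and the coercivity of the hyperviscosity $\delta_2\Delta^2 u$ and of the damping $r_0u$ makes the resulting finite-dimensional problem well posed. A fixed-point theorem (Banach on a short time interval, or Schauder) applied to the map $v\mapsto u$ then produces a local solution $(\rho_N,u_N)$ of the coupled Galerkin system, all nonlinearities being locally Lipschitz in the finite-dimensional coefficients as long as $\rho$ stays bounded below.

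Next I would reproduce the energy estimate \eqref{eq_energy_regularized_NSLK} at the discrete level: testing the momentum equation with $u_N$ (admissible since $u_N\in X_N$) and using the exactly solved continuity equation yields $E_{\reg}(\rho_N,u_N)(t)+\int_0^t D_{\reg}(\rho_N,u_N)\,ds\leq E_0$, with $E_{\reg}$ and $D_{\reg}$ as in \eqref{energy_regularized_NSLK}--\eqref{dissipation_regularized_NSLK}. The isothermal pressure makes $E_{\reg}$ a priori sign-indefinite, but the argument of Remark \ref{focusing_remark} bounds the logarithmic term by the conserved mass and $\|\nabla\sqrt\rho\|_{L^2}$, so $E_{\reg}$ is bounded from below; the energy estimate then delivers bounds uniform in $N$ and global in time, which both preclude finite-time blow-up of the Galerkin ODE (hence existence on all of $[0,T[$) and supply the compactness bounds $\sqrt\rho\in L^\infty H^1$, $\rho\in L^\infty H^{2s+1}\cap L^2 H^{2s+2}$, $\rho^{-\alpha}\in L^\infty L^1$, $\sqrt\rho\,u\in L^\infty L^2$ and $u\in L^2 H^2$, together with a bound on $\partial_t\rho$ through \eqref{continuity_regularized_NSLK}. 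An Aubin--Lions--Simon argument then gives $\rho_N\to\rho$ strongly in $C([0,T[;H^{2s'}(\T^d))$ for $s'<s+1$ --- hence uniformly, as $2s>d/2$ for $d\leq 3$ --- and $u_N\rightharpoonup u$ in $L^2 H^2$, which suffices to pass to the limit in every term of \eqref{fluid_regularized_NSLK_eq}.

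The hard part, and the reason for the cold-pressure term $\eta_1\nabla\rho^{-\alpha}$ with $\alpha$ (and $s$) chosen large, is to upgrade the lower bound on $\rho$ into one that is uniform in $N$ and survives the limit, so that $1/\rho\in C^0(\left[0,T\right[\times\T^d)$ and the velocity $u=(\rho u)/\rho$ is genuinely recovered (which also gives $u\in L^\infty L^2$ from $\sqrt\rho\,u\in L^\infty L^2$). At fixed $N$ the maximum principle already gives positivity, but its uniformity has to be extracted from the energy: the uniform bound on $\int_{\T^d}\rho^{-\alpha}$, combined with the uniform (Hölder) continuity of $\rho$ coming from the $H^{2s+1}$-control and Sobolev embedding, forces $\min_x\rho$ to stay bounded away from zero, since a small minimum $m$ would make $\int_{\T^d}\rho^{-\alpha}$ blow up like a negative power of $m$ once $\alpha$ is large enough relative to $d$ and the Hölder exponent; this is exactly where the requirement that $\alpha,s$ be sufficiently large is used. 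Once this uniform lower bound is secured, the passage to the limit in the singular and degenerate terms is routine, and weak lower semicontinuity of the convex functional $E_{\reg}$ yields \eqref{eq_energy_regularized_NSLK} for the limit, completing the proof.
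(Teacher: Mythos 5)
Your overall architecture matches the paper's proof: a Faedo--Galerkin scheme in which the continuity equation is solved exactly by parabolic theory (with the maximum principle propagating positivity from $\rho_0\geq\theta$), a fixed-point argument closing the coupled system, the discrete energy inequality \eqref{eq_energy_regularized_NSLK} giving $N$-uniform bounds, a uniform-in-$N$ positive lower bound on the density coming from the cold-pressure term, and an Aubin--Lions limit $N\to\infty$. One pleasant difference: for the lower bound on $\rho$ you combine the $L^\infty_t L^1_x$ control of $\rho^{-\alpha}$ with uniform H\"older continuity of $\rho$ to force $\min_x\rho$ away from zero, whereas the paper (Remark \ref{regularity_regularized_NSLK} and Step 2) goes through Lemma \ref{inverse_lemma}, bounding $\nabla^n(\rho^{-1})$ in $L^2$ by powers of $\|\rho^{-1}\|_{L^{2(n+1)}}$ and $\|\rho\|_{H^\sigma}$ and concluding $\|1/\rho\|_{L^\infty}\leq C(E_{\reg}(\rho_0,u_0),\eta_1,\eta_2,\theta)$ by Sobolev embedding. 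Your variant is valid (for $\alpha$ large relative to $d$ and the H\"older exponent) and arguably more elementary.

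There is, however, a genuine gap in your limit passage: you claim that $\rho_N\to\rho$ strongly together with $u_N\rightharpoonup u$ \emph{weakly} in $L^2([0,T[;H^2(\T^d))$ ``suffices to pass to the limit in every term'' of \eqref{fluid_regularized_NSLK_eq}. It does not. The convective term $\rho_N u_N\otimes u_N$ and the drag term $r_1\rho_N|u_N|^2u_N$ are quadratic and cubic in $u_N$, and weak convergence is not stable under such products (the weak limit of $u_N\otimes u_N$ may differ from $u\otimes u$ by a defect). Nor can you repair this by applying Aubin--Lions to $u_N$ itself, since the Galerkin system provides no $N$-uniform control of $\partial_t u_N$: obtaining it requires inverting the mass matrix $\bigl(\int_{\T^d}\rho_N e_j\cdot e_k\bigr)_{j,k}$, whose conditioning is a priori $N$-dependent. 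The paper closes exactly this point by first proving that $(\mathbb{P}_N[\rho_N u_N])_N$ and $(\rho_N u_N)_N$ converge in $L^2([0,T[;H^1(\T^d))$ (time-equicontinuity of the \emph{momentum} via the equation, plus orthogonality of the basis for the $H^2$ scalar product, following \cite{carles2019}), and then dividing by $\rho_N$ --- legitimate precisely because of the uniform bound on $1/\rho_N$ you established --- to get $u_N\to u$ strongly in $L^2([0,T[;H^1(\T^d))$; the cubic term is then handled by the interpolation bound $u_N\in L^4([0,T[;L^3(\T^d))$ deduced from $u_N\in L^\infty([0,T[;L^2)\cap L^2([0,T[;H^1)$. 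Without this (or an equivalent) strong-compactness step for the velocity, your proof fails exactly on the nonlinear-in-$u$ terms; the rest of your argument is sound.
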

 
 \begin{remark} \label{regularity_regularized_NSLK}
  We note that the conservation mass \eqref{conservation_mass} and the energy estimate \eqref{energy_regularized_NSLK} induce the following uniform bounds with respect to $E(\rho_0,u_0)$:
  \begin{align*}
      \rho (1+|\log \rho|) \in L^{\infty}(\left[ 0,T \right[; L^1(\T^d)), \ \ \ \sqrt{\rho} u \in L^{\infty}(\left[ 0,T \right[; L^2(\T^d)), \\
      \nu \sqrt{\rho} \nabla u \in L^2(\left[ 0,T \right[; L^2(\T^d)), \ \ \ \hbar \nabla \sqrt{\rho} \in L^{\infty}(\left[ 0,T \right[; L^2(\T^d)), \\
      \sqrt{r_0} u \in L^2(\left[ 0,T \right[; L^2(\T^d)), \ \ \ \sqrt{r_1} \rho^{\frac{1}{4}} u \in L^4(\left[ 0,T \right[; L^4(\T^d)), \\
      \eta_1^{\frac{1}{\alpha}} \rho^{-1} \in L^{\infty}(\left[ 0,T \right[; L^{\alpha}(\T^d)), \ \ \ \sqrt{\eta_2} \rho \in  L^{\infty}(\left[ 0,T \right[; H^{s+1}(\T^d)), \\
      \sqrt{\delta_1 \eta_1} \nabla \rho^{-\frac{\alpha}{2}} \in L^2(\left[ 0,T \right[; L^2(\T^d)), \ \ \ \sqrt{\delta_1 \eta_2} \Delta^{s+1} \rho \in L^2(\left[ 0,T \right[; L^2(\T^d)), \\
      \sqrt{\delta_2} \Delta u \in L^2(\left[ 0,T \right[; L^2(\T^d)).
  \end{align*}
  Also, combining the energy estimate \eqref{eq_energy_regularized_NSLK} with Lemma \ref{technical_lemma} ensures that
  \[ \sqrt{\nu \hbar^2} \nabla^2 \sqrt{\rho}  \in L^2(\left[ 0,T \right[; L^2(\T^d)), \ \ \  (\nu \hbar^2)^{\frac{1}{4}} \nabla \rho^{\frac{1}{4}} \in L^4(\left[ 0,T \right[; L^4(\T^d)).  \]
  Finally, combining these bounds with Lemma \ref{inverse_lemma} we obtain that there exists a positive constant $C(E_{reg}(\rho_0,u_0),\eta_1,\eta_2,\theta)$ such that
  \[ \| 1/\rho \|_{L^{\infty}(\left[ 0,T \right[ \times \T^d)} \leq C(E_{reg}(\rho_0,u_0),\eta_1,\eta_2,\theta).   \]
 \end{remark}
 
 \begin{proof} In all the convergences mentioned in the following proof, we have to extract subsequences that we do not relabel for conciseness. \\
 
 \textbf{Step 1: Faedo-Galerkin approximation.} Following \cite{jungel2010} and \cite{vasseur2016}, we denote $(e_k)_{k \in \N}$ an orthonormal basis of $L^2(\T^d)$ (which is also an orthogonal basis of $H^1(\T^d)$) and we introduce  the finite-dimensional space $X_N=\spn \left\{ e_1, \ldots, e_N \right\}$. Then the classical theory of parabolic equation gives us the existence of a function
 \[\rho_N \in \mathcal{C}^{0}(\left[0,T \right[; H^{2s+1}(\T^d))\] 
 satisfying equation \eqref{continuity_regularized_NSLK_eq} on $\left[0,T \right[ \times \T^d$, and the maximum principle provides some lower and upper bounds on the density $\rho_N$. In particular, since we assumed that $\rho_0 \geq \theta >0$, we get that $\rho_N$ is strictly positive. Then, by a fixed point argument, the standard theory for systems of ordinary differential equations provides the existence of a unique classical solution
 \[ u_N \in  \mathcal{C}^{0}(\left[0,T \right[;X_N)  \]
 satisfying equation \eqref{fluid_regularized_NSLK_eq} (taking $\rho=\rho_N$) on $\left[0,T \right[ \times X_N$. Furthermore,  integrating equation \eqref{continuity_regularized_NSLK} on space, we get the conservation of mass of $\rho_N$ \eqref{conservation_mass}, and multiplying equation \eqref{fluid_regularized_NSLK} with $u_N$ and combining with equation \eqref{continuity_regularized_NSLK} we get the energy estimate \eqref{eq_energy_regularized_NSLK} for $(\rho_N,u_N)$. \\
 
  \textbf{Step 2: Uniform estimates on the approximate solutions.} We note that $\rho_N(0,.)=\rho_0$ and $\rho_Nu_N(0,.)= \mathbb{P}_N \left[ \rho_0 u_0 \right]$, where $\mathbb{P}_N$ denotes the $L^2(\T^d)$-projection onto $X_N$. In particular, since by assumption $\rho_0u_0 \in L^2(\T^d)$, we have that
  \[ E_{\reg}(\rho_N,u_N)(0) \leq E_{\reg}(\rho_0,u_0),   \]
  and so with the energy inequality \eqref{eq_energy_regularized_NSLK} satisfied by $(\rho_N,u_N)$ we get that, for all $N \in \mathbb{N}^*$,
  \begin{equation} \label{energy_inequality_faedo}
 \sup_{0 \leq t \leq T}  E_{\reg}(\rho_N,u_N)(t) + \int_0^T D_{\reg}(\rho_N,u_N)(s) ds \leq C(  E_{\reg}(\rho_0,u_0)). 
  \end{equation} 
 From this inequality we get several uniform bounds on $(\rho_N,u_N)$ with respect to $N$, in particular we get the existence of some functions $\rho$ and $V$ such that
 \begin{align*}
 \rho_N \rightharpoonup \rho \ \text{weakly in} \ L^{\infty}(\left[0,T \right[; H^{2s+1}(\T^d)), \\ \sqrt{\rho_N} u_N \rightharpoonup V \ \text{weakly in} \ L^{\infty}(\left[0,T \right[;L^2(\T^d)).
  \end{align*}
 Using the bounds
  \[ \rho_N \in L^{\infty}(\left[0,T \right[; H^{2s+1}(\T^d)) \ \ \ \text{and} \ \ \  \frac{1}{\rho_N} \in L^{\infty}(\left[0,T \right[;L^{\alpha}(\T^d))\]
  together with Lemma \ref{inverse_lemma}, we get that there exists a constant $C( E_{\reg}(\rho_0,u_0), \eta_1, \eta_2, \theta, T)$ such that
  \begin{equation} \label{bound_below_rho_N}
      \rho\geq C( E_{\reg}(\rho_0,u_0), \eta_1, \eta_2, \theta, T)>0,  
  \end{equation}  
  and we may set $u=V/\sqrt{\rho}$. \\
  
  \textbf{Step 3: The limit $N \rightarrow \infty$.} In order to pass to the limit $N \rightarrow \infty$ into equations \eqref{continuity_regularized_NSLK_eq} and \eqref{fluid_regularized_NSLK_eq}, we need the convergence of our functions $\rho_N$ and $u_N$ in a stronger sense. As $(\rho_N,u_N)$ satisfies the energy inequality \eqref{eq_energy_regularized_NSLK}, it also gets the uniform bounds of Remark \ref{regularity_regularized_NSLK}, in particular:
    \begin{align*}
  \rho_N \in L^{\infty} (\left[ 0,T \right[; H^{2s+1}(\T^d)) \cap L^2(\left[ 0,T \right[;  H^{2s+2}(\T^d)) , \\
  1/\rho_N \in L^{\infty} (\left[ 0,T \right[ \times \T^d), \ \ \ u_N \in L^2(\left[ 0,T \right[;  H^2(\T^d)),    
   \end{align*}
   and from the continuity equation \eqref{continuity_regularized_NSLK} satisfied by $\rho_N$ we know that $\partial_t \rho_N$ is bounded in $L^2( \left[ 0 ,T \right[; H^1(\T^d))$. By classical weak-convergence results and Ascoli-Arzelà type argument we get that
   \begin{align*}
 \rho_N \rightarrow \rho \ \text{strongly in} \ \mathcal{C}^0(\left[0,T \right[; H^{2s}(\T^d)), \\ 
 \rho_N \rightharpoonup \rho \ \text{weakly in} \ L^2(\left[0,T \right[;H^{2s+2}(\T^d)), \\
 \rho_N \rightharpoonup \rho \ \text{weakly in} \ H^1(\left[0,T \right[;H^1(\T^d)),
  \end{align*}
   and from the bound from below on $\rho_N$ \eqref{bound_below_rho_N} we also get that
   \[  1/\rho_N \rightarrow 1/\rho \ \text{strongly in} \ \mathcal{C}^0(\left[0,T \right[ \times \T^d ).   \]
   Furthermore, given the uniform bounds for $\rho_N$ and $u_N$, and since $(e_k)_k$ is orthogonal for the $H^2$- scalar product, we can get that $(\mathbb{P}_N\left[ \rho_N u_N \right])_N$ and $(\rho_N u_N)_N$ both converge in $L^2(\left[0,T \right[; H^1(\T^d))$ (see \cite{carles2019} for the details). Together with the fact that $(1/\rho_N)_N$ is uniformly bounded, we get that
   \[  u_N \rightarrow u \ \text{strongly in} \ L^2(\left[0,T \right[;H^{1}(\T^d)).  \]
   Note that the uniform estimates satisfied by $(\rho_N,u_N)$ also entail that $u_N$ (and so $u$) is bounded in $L^{\infty} (\left[0,T \right[; L^2(\T^d)) \cap L^2 (\left[0,T \right[; H^2(\T^d))$. The only remaining problematic term in \eqref{fluid_regularized_NSLK_eq} is $r_1 \rho_N |u_N|^2 u_N$. However as $u_N$ is bounded in $L^{\infty}(\left[ 0,T \right[; L^2(\T^d))$ and in $L^2(\left[ 0,T \right[; H^1(\T^d))$, we get by interpolation that $u_N$ is also bounded in $L^4(\left[ 0,T \right[; L^3(\T^d))$. \\
As each term in \eqref{continuity_regularized_NSLK_eq} and \eqref{fluid_regularized_NSLK_eq} is now handle by the previous regularities of $\rho_N$ and $u_N$, we can pass to the limit $N \rightarrow \infty$ into these equations, so finally $(\rho,u)$ is a weak solution of the regularized NSLK system \eqref{regularized_NSLK} in $\left[ 0,T \right[ \times \T^d$ with initial data $(\rho_0,u_0)$. 
 \end{proof}
 
In order to pass into the limits $\delta_1, \delta_2, \eta_1, \eta_2 \rightarrow 0$, we will need further estimates on our solution $(\rho,u)$. A common way to get other estimates is to introduce the following BD-entropy:
\begin{multline}
    \mathcal{E}_{\reg}(\rho,u)=\frac{1}{2} \int_{\T^d} \left(  \rho |u+\nu \nabla \log \rho|^2  +\hbar^2 |\nabla \sqrt{\rho} |^2 \right) + \lambda' \int_{\T^d} H(\rho) \\ +\frac{\eta_1}{\alpha + 1} \int_{\T^d} \rho^{-\alpha} + \frac{\eta_2}{2} \int_{\T^d} | \nabla \Delta^s \rho |^2 - r_0 \int_{\T^d} \log \rho,
\label{energy_BD_entropy_NSLK_reg} 
 \end{multline}
\begin{multline}
    \mathcal{D}_{\reg}(\rho,u) = \mu \int_{\T^d} \rho |u|^2  + \nu \int_{\T^d} \rho | \mathbb{A} u |^2 + \left( \nu \hbar^2 + \delta_1 \nu^2 + \frac{\delta_1 \hbar^2}{2}  \right)\int_{\T^d} \rho | \nabla^2 \log \rho |^2  \\
    + 4 (\nu + \delta_1) \int_{\T^d}|\nabla \sqrt{\rho} |^2     + \left( \frac{\eta_1 \nu \alpha}{4} + \frac{4 \delta_1 \eta_1}{10} \right) \int_{\T^d} | \nabla \rho^{-\frac{\alpha}{2}}|^2 \\
    + \eta_2 (\nu + \delta_1) \int_{\T^d} | \Delta^{s+1} \rho |^2 + \delta_2 \int_{\T^d} |\Delta u|^2 + r_0 \int_{\T^d} |u|^2 + r_1 \int_{\T^d} \rho|u|^4. 
\label{dissipation_BD_entropy_NSLK_reg}
\end{multline}

\begin{proposition}
 Let $T>0$ and assume $(\rho_0,u_0)$ satisfies the regularity \eqref{CI_regularized}. We denote $(\rho,u)$ the weak solution of \eqref{regularized_NSLK} constructed in Proposition \ref{regularized_solution_prop}. Then there exists constants $C_1$ and $C_2$ with dependencies mentioned in parentheses, such that:
 \begin{equation} \label{BD_entropy_regularized}
     \sup_{t \in \left[0 , T \right[} \mathcal{E}_{\reg}(\rho,u)(t) + \int_0^T \mathcal{D}_{\reg}(\rho,u)(s) ds \leq C_1(E_{\reg}(0)) + (\delta_1 + \delta_2) C_2(r_0,r_1,\eta_1,\eta_2,E_{\reg}(0)) .
 \end{equation}
\end{proposition}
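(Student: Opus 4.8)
The estimate \eqref{BD_entropy_regularized} is a Bresch--Desjardins (BD) entropy estimate, and the plan is to track the time evolution of the augmented kinetic energy built on the \emph{effective velocity} $w := u + \nu \nabla \log \rho$. Since the solution $(\rho,u)$ from Proposition~\ref{regularized_solution_prop} enjoys $\rho \in \mathcal{C}^0(H^{2s}) \cap L^2(H^{2s+2})$ with $1/\rho \in \mathcal{C}^0$ bounded and $u \in L^\infty(L^2)\cap L^2(H^2)$, the field $\nabla \log \rho$ is admissible and sufficiently regular, so the manipulations below are licit; where the regularity is borderline I would carry them out at the Faedo--Galerkin level of Step~1 and pass to the limit, exactly as for the energy estimate \eqref{eq_energy_regularized_NSLK}.

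First I would record the \emph{renormalized continuity equation}: dividing \eqref{continuity_regularized_NSLK} by $\rho$ and taking the gradient yields an evolution equation for $\nabla \log \rho$,
\[ \partial_t(\nabla \log \rho) + \nabla(u\cdot \nabla \log \rho) + \nabla \Div u = \delta_1 \nabla\!\left( \frac{\Delta \rho}{\rho} \right), \]
where the $\delta_1$ right-hand side is precisely what generates the $\delta_1$-weighted dissipation terms (the $\delta_1 \nu^2$ and $\tfrac{\delta_1 \hbar^2}{2}$ coefficients in front of $\int_{\T^d}\rho|\nabla^2 \log \rho|^2$, the extra $4\delta_1\int_{\T^d}|\nabla\sqrt\rho|^2$, and so on) together with remainder terms. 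Then I would combine two identities: the energy identity from testing the momentum equation against $u$ (giving \eqref{eq_energy_regularized_NSLK}), and the \emph{drift identity} from testing the momentum equation against $\nu\nabla\log\rho$ and using the renormalized continuity equation. Their sum reconstructs $\tfrac12\frac{d}{dt}\int_{\T^d}\rho|w|^2$ on the left. The decisive cancellation is the BD one: the symmetric viscosity $\nu\Div(\rho\mathbb{D}u)$ contributes $\nu\int_{\T^d}\rho|\mathbb{D}u|^2$ through the energy and a matching cross term through the drift, whose sum collapses --- via $|\nabla u|^2 = |\mathbb{D}u|^2 + |\mathbb{A}u|^2$ and an integration by parts --- into the antisymmetric dissipation $\nu\int_{\T^d}\rho|\mathbb{A}u|^2$ plus density-gradient controls. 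The isothermal pressure $\lambda\nabla\rho$ tested against $\nu\nabla\log\rho$ produces the manifestly nonnegative $\lambda\nu\int_{\T^d}|\nabla\rho|^2/\rho = 4\lambda\nu\int_{\T^d}|\nabla\sqrt\rho|^2$, feeding the $\int_{\T^d}|\nabla\sqrt\rho|^2$ dissipation, while the Korteweg term, rewritten as $\tfrac{\hbar^2}{4}\Div(\rho\nabla^2\log\rho)$, yields after integration by parts the $\rho|\nabla^2\log\rho|^2$ contribution (Lemma~\ref{technical_lemma} being used to convert between $\rho|\nabla^2\log\rho|^2$ and $|\nabla\sqrt\rho|^2$).

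Next I would account for the remaining forces. The drag $\mu\rho u$ couples with the effective velocity through the $\frac{d}{dt}\int_{\T^d}\rho u\cdot \nabla\log\rho$ cross term so as to shift the effective pressure coefficient from $\lambda$ to $\lambda' = \lambda - \mu\nu$, which is exactly the coefficient appearing in \eqref{energy_BD_entropy_NSLK_reg}; the lower-order friction $r_0 u$ tested against $\nu\nabla\log\rho$ produces the $-r_0\int_{\T^d}\log\rho$ term in the entropy, alongside the $r_0\int_{\T^d}|u|^2$ dissipation from the energy. The cold-pressure regularizer $\eta_1\nabla\rho^{-\alpha}$ and the capillary regularizer $\eta_2\rho\nabla\Delta^{2s+1}\rho$ contribute, after several integrations by parts exploiting the high spatial regularity of $\rho$, the good dissipations $\int_{\T^d}|\nabla\rho^{-\alpha/2}|^2$ and $\eta_2(\nu+\delta_1)\int_{\T^d}|\Delta^{s+1}\rho|^2$, whereas the purely parabolic regularizers $\delta_1(\nabla\rho\cdot\nabla)u$ and $\delta_2\Delta^2 u$ generate both good terms and sign-indefinite remainders. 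I would bound every such remainder by the already-controlled energy dissipation $D_{\reg}$ times a power of the regularization parameters, so that they collect into the $(\delta_1+\delta_2)C_2(r_0,r_1,\eta_1,\eta_2,E_{\reg}(0))$ term on the right of \eqref{BD_entropy_regularized}. Finally, exactly as for the basic energy in Remark~\ref{focusing_remark}, the terms $\lambda'\int_{\T^d}H(\rho)$ and $-r_0\int_{\T^d}\log\rho$ carry no definite sign, so I would first show that $\mathcal{E}_{\reg}$ is bounded below by a Gagliardo--Nirenberg/Hölder argument on the compact torus, and then integrate the resulting differential inequality in time --- invoking the energy bound \eqref{eq_energy_regularized_NSLK} to control the forcing, and a Grönwall step should the sign-indefinite terms couple back --- to reach the stated supremum-plus-dissipation bound.

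The main obstacle, as anticipated, is twofold. The genuinely delicate point is the bookkeeping of the high-order regularizers $\delta_2\Delta^2 u$ and $\eta_2\rho\nabla\Delta^{2s+1}\rho$ when tested against $\nu\nabla\log\rho$: the repeated integrations by parts create couplings between $\nabla^2 u$, $\Delta^{s+1}\rho$ and $\nabla\log\rho$ that are not individually signed, and the whole difficulty is to reorganize them so that the dangerous pieces are either absorbed by the good dissipation or are genuinely of size $\delta_1+\delta_2$ --- which is precisely why the constant $C_2$ is allowed to depend on $r_0,r_1,\eta_1,\eta_2$ while the prefactor is only $\delta_1+\delta_2$. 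The second, structural, difficulty is the absence of a sign for the isothermal contribution, which forces the lower-bound argument of Remark~\ref{focusing_remark} to be rerun for the \emph{modified} entropy $\mathcal{E}_{\reg}$ before any closure of the estimate is possible.
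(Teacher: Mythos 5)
Your proposal follows essentially the same route as the paper's proof (which itself only sketches the computation and defers the technical details to the isothermal Navier--Stokes--Korteweg analysis it cites): the paper likewise combines the spatially differentiated continuity equation with the momentum equation tested against $\zeta=(\nu\nabla\log\rho)\xi$ --- equivalently, tracking $\tfrac12\int_{\T^d}\rho|u+\nu\nabla\log\rho|^2$ as you do --- and then closes the estimate by H\"older/Young/Sobolev bookkeeping of the regularizing terms, with the sign-indefinite pieces $\lambda'\int_{\T^d}H(\rho)$ and $-r_0\int_{\T^d}\log\rho$ handled exactly by the lower-bound arguments of Remarks~\ref{focusing_remark} and~\ref{remark_log_rho}. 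Your identification of the BD cancellation, of the shift $\lambda\mapsto\lambda'=\lambda-\mu\nu$ produced by the drag term, and of the $(\delta_1+\delta_2)C_2$ structure of the indefinite remainders is consistent with the paper's statement and its cited reference, so the proposal is correct and not genuinely different in method.
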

\begin{proof}
The proof of this identity is mostly technical, and we refer to \cite{carles2019} for the details (as the new dissipation term $\mu \rho u$ is essentially harmless in the calculation). The idea is to differentiate and multiply by $\nu^2$ the continuity equation \eqref{continuity_regularized_NSLK}, which gives
\[ \nu^2( \partial_t( \rho \nabla \log \rho ) + \Div ( \rho \nabla \log \rho \otimes u) + \Div( \rho \nabla^\top u) - \delta_1 \Delta \nabla \rho) =0,  \]
and we also take $ \zeta =(\nu \nabla \log \rho) \xi$, where $\xi \in \mathcal{C}_0^{\infty}(\left[ 0,T \right[ \times \T^d; \T^d)$, as a test function in the weak formulation of the momentum equation \eqref{fluid_regularized_NSLK_eq}. We then combine these two equations and integrate in time. The technical part is then to prove that each term appearing in this equation is well defined, and using some classical inequality from functional analysis (namely H\"older or Young inequalities and Sobolev embedding) we finally get \eqref{BD_entropy_regularized}.
\end{proof}

\begin{remark} \label{remark_log_rho}
 Note that the presence of the term $-r_0 \int_{\T^d} \log \rho$ in $\mathcal{E}_{\reg}(\rho,u)$ prevents this quantity from being positive, however as for the logarithmic energy term we can get a lower bound for this term by controlling his negative part
 \[ -r_0 \int_{\rho >1} \log \rho \geq -r_0  \int_{\rho >1}  \rho \geq -r_0  \int_{\T^d}  \rho =  -r_0  \| \rho_0 \|_{L^1(\T^d)} \]
 as $\rho$ satisfies the mass conservation \eqref{conservation_mass}. Hence we can properly get some new uniform bounds on $(\rho,u)$ using equation \eqref{BD_entropy_regularized}.
\end{remark}

\subsection{NSLK with drag forces}
We are now going to prove the existence of a weak solution to the following system, which will be called NSLK system with drag forces:
\begin{subequations} \label{NSLK_drag}
\begin{empheq}[left=\empheqlbrace]{align}
& \partial_t \rho + \Div (\rho u) =0, \label{continuityNSLK_drag}  \\
& \partial_t (\rho u) + \Div \left( \rho u \otimes u \right) + \lambda \nabla \rho + \mu \rho u + r_0 u + r_1 \rho |u|^2u = \frac{\hbar^2 }{2} \rho \nabla \left( \frac{\Delta \sqrt{\rho}}{\sqrt{\rho}} \right) + \nu \Div \left( \rho \mathbb{D} u \right). \label{fluidNSLK_drag} 
\end{empheq} 
\end{subequations} 
We define the energy of this system and its corresponding dissipation from the ones of the previous regularized system,
\[ E_{\drag}(\rho,u):=  E_{\reg}(\rho,u)|_{\delta_1,\delta_2,\eta_1,\eta_2=0} \ \ \ \text{and} \ \ \  D_{\drag}(\rho,u):=  D_{\reg}(\rho,u)|_{\delta_1,\delta_2,\eta_1,\eta_2=0}, \]
as well as the BD-entropy and its corresponding flux,
\[ \mathcal{E}_{\drag}(\rho,u):=   \mathcal{E}_{\reg}(\rho,u)|_{\delta_1,\delta_2,\eta_1,\eta_2=0} \ \ \ \text{and} \ \ \  \mathcal{D}_{\drag}(\rho,u):=  \mathcal{D}_{\reg}(\rho,u)|_{\delta_1,\delta_2,\eta_1,\eta_2=0} .\]

 \begin{definition} 
Let $T >0$. We say that $(\rho,u)$ is a \textbf{weak solution} of the NSLK system with drag forces \eqref{NSLK_drag} in $\left[ 0,T \right[ \times \T^d$ with initial data $(\rho_0,u_0) \in L^1(\T^d) \times  L^2(\T^d)$, if the following holds:

\begin{enumerate}[label=(\roman*)]

    \item The global regularity:
    \[ \sqrt{\rho} \in \mathcal{C}^0(\left[0,T\right[; H^1(\T^d)), \ \ \ \nabla^2 \sqrt{\rho} \in L^2(\left[0,T\right[; L^2(\T^d)), \]
    \[  u \in L^2(\left[0,T\right[; L^2(\T^d)), \ \ \ \sqrt{\rho} u \in \mathcal{C}^0(\left[0,T\right[; L^2(\T^d)). \]
      
      \item For any test function $\eta \in \mathcal{C}_0^{\infty}(\left[ 0,T \right[ \times \T^d)$, equation \eqref{continuity_regularized_NSLK_eq} holds, and for any test function $\zeta \in \mathcal{C}_0^{\infty}(\left[ 0,T \right[ \times \T^d; \T^d)$, equation \eqref{fluid_regularized_NSLK_eq} holds, taking $\delta_1,\delta_2,\eta_1,\eta_2=0$ for both equations.
         
\end{enumerate}
\label{weak_drag_NSLK}
\end{definition}

\begin{remark}
As in \cite{carles2019}, remark that in presence of drag forces $r_0, r_1 >0$, $u$ is well defined as function, $\nabla u$ as a distribution and $\sqrt{\rho} \mathbb{D} u$ is also well defined, unlike in the original system \eqref{NSLK} without drag forces where the regularity induced by the energy estimate \eqref{eq_energy_NSLK} is insufficient to define $u$ and so $\sqrt{\rho} \mathbb{D} u$ has to be understood as $\mathbb{S}_N(u)$.
\end{remark}

\begin{proposition} \label{prop_NSLK_drag}
 Let $T>0$ and assume $(\rho_0,u_0)$ satisfies the regularity \eqref{CI_regularized} such that
 \[ E_{\drag}(\rho,u)(0)=E_{\drag}(\rho_0,u_0) < \infty \ \ \ \text{and} \ \ \  \mathcal{E}_{\drag}(\rho,u)(0)=\mathcal{E}_{\drag}(\rho_0,u_0) < \infty.  \]
 Then there exists a weak solution of the NSLK system with drag forces \eqref{regularized_NSLK} in $\left[ 0,T \right[ \times \T^d$ with initial data $(\rho_0,u_0)$. Furthermore, there exists constants $C_1$ and $C_2$ with dependencies mentioned in parentheses, such that
  \begin{equation} \label{BD_energy_drag}
     \sup_{t \in \left[0 , T \right[} E_{\drag}(\rho,u)(t) + \int_0^T D_{\drag}(\rho,u)(s) ds \leq C_1(E_{\drag}(0)) ,
\end{equation}
and
 \begin{equation} \label{BD_entropy_drag}
     \sup_{t \in \left[0 , T \right[} \mathcal{E}_{\drag}(\rho,u)(t) + \int_0^T \mathcal{D}_{\drag}(\rho,u)(s) ds \leq  C_2(E_{\drag}(0), \mathcal{E}_{\drag}(0)) .
 \end{equation}
\end{proposition}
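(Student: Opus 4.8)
The plan is to take the family $(\rho_\epsilon,u_\epsilon)$ of solutions produced by Proposition \ref{regularized_solution_prop}, with $\epsilon=(\delta_1,\delta_2,\eta_1,\eta_2)$, and to send these four parameters to zero; following \cite{vasseur2016} and \cite{carles2019} this is carried out one parameter at a time (say $\delta_2$, then $\delta_1$, then $\eta_2$, then $\eta_1$), the argument being structurally identical at each stage. The first task is to record the bounds that are uniform in $\epsilon$. The energy estimate \eqref{eq_energy_regularized_NSLK} is already uniform, its lower bound being obtained exactly as in Remark \ref{focusing_remark} from the conservation of mass and the control of the negative part of $\lambda\int_{\T^d}H(\rho)$; in the BD-entropy bound \eqref{BD_entropy_regularized} the remainder $(\delta_1+\delta_2)C_2$ vanishes in the limit while $C_1(E_{\reg}(0))$ stays bounded, using that $\rho_0$ is smooth with $\rho_0\geq\theta>0$ so that the initial cold-pressure and capillary contributions $\frac{\eta_1}{\alpha+1}\int_{\T^d}\rho_0^{-\alpha}$ and $\frac{\eta_2}{2}\int_{\T^d}|\nabla\Delta^s\rho_0|^2$ tend to $0$, the spurious term $-r_0\int_{\T^d}\log\rho$ being bounded from below through the mass conservation as in Remark \ref{remark_log_rho}. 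This produces, uniformly in $\epsilon$, the bounds of Remark \ref{regularity_regularized_NSLK} together with $\sqrt{\rho_\epsilon}\in L^\infty(H^1)$, $\sqrt{\nu\hbar^2}\,\nabla^2\sqrt{\rho_\epsilon}\in L^2(L^2)$ (combining the BD-dissipation with Lemma \ref{technical_lemma}), $\sqrt{\rho_\epsilon}\,u_\epsilon\in L^\infty(L^2)$, $\sqrt{\nu}\,\sqrt{\rho_\epsilon}\,\nabla u_\epsilon\in L^2(L^2)$ (adding the $\mathbb{D}u$ and $\mathbb{A}u$ dissipations of \eqref{dissipation_regularized_NSLK} and \eqref{dissipation_BD_entropy_NSLK_reg}), $\sqrt{r_0}\,u_\epsilon\in L^2(L^2)$ and $\sqrt{r_1}\,\rho_\epsilon^{1/4}u_\epsilon\in L^4(L^4)$.

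Next I would extract compactness. The continuity equation controls $\partial_t\rho_\epsilon$ in a negative-order space, and together with $\sqrt{\rho_\epsilon}\in L^2([0,T[;H^2(\T^d))\cap L^\infty([0,T[;H^1(\T^d))$ an Aubin--Lions argument yields $\sqrt{\rho_\epsilon}\to\sqrt{\rho}$ strongly in $L^2([0,T[;H^1(\T^d))$; in particular $\nabla\sqrt{\rho_\epsilon}\to\nabla\sqrt{\rho}$ strongly in $L^2(L^2)$, which is what is needed to pass to the limit in the quadratic capillary contribution $\hbar^2\nabla\sqrt{\rho}\otimes\nabla\sqrt{\rho}:\nabla\zeta$. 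For the momentum the drag terms are decisive: the dissipations $r_0\int_{\T^d}|u|^2$ and $r_1\int_{\T^d}\rho|u|^4$ make $u_\epsilon$ a genuine function, bounded in $L^2(L^2)$ and, by interpolation, in $L^4([0,T[;L^3(\T^d))$, so that $\rho_\epsilon u_\epsilon$ is bounded with positive regularity; using the momentum equation to control $\partial_t(\rho_\epsilon u_\epsilon)$, a second Aubin--Lions argument gives the strong convergence of $\rho_\epsilon u_\epsilon$ and of $\sqrt{\rho_\epsilon}\,u_\epsilon$ in $L^2$. This identifies the convective term $\rho u\otimes u=\sqrt{\rho}\,u\otimes\sqrt{\rho}\,u$ and the cubic drag term $r_1\rho|u|^2u$ in the limit.

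With these convergences in hand, passing to the limit in the weak formulation \eqref{continuity_regularized_NSLK_eq}--\eqref{fluid_regularized_NSLK_eq} reduces to checking that the four regularizing contributions disappear. The viscous ones vanish with their prefactors against the uniform bounds: for instance $\delta_2\int\Delta u_\epsilon\cdot\Delta\zeta$ is $O(\sqrt{\delta_2})$ since $\sqrt{\delta_2}\,\Delta u_\epsilon\in L^2(L^2)$, and likewise for $\delta_1\int\rho_\epsilon\Delta\eta$ and $\delta_1\int\nabla u_\epsilon:\nabla\rho_\epsilon\otimes\zeta$ using Remark \ref{regularity_regularized_NSLK}. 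The cold-pressure term $\eta_1\rho_\epsilon^{-\alpha}\Div\zeta$ and the high-order capillary term $\eta_2\Delta^{s+1}\rho_\epsilon\,\Delta^s[\nabla\rho_\epsilon\cdot\zeta+\rho_\epsilon\Div\zeta]$ are shown to vanish through the weighted bounds of Remark \ref{regularity_regularized_NSLK} on $\sqrt{\eta_1}\,\nabla\rho_\epsilon^{-\alpha/2}$ and $\sqrt{\eta_2}\,\rho_\epsilon$, together with the interpolation arguments of \cite{vasseur2016} and \cite{carles2019}. The limit $(\rho,u)$ then satisfies \eqref{continuity_regularized_NSLK_eq} and \eqref{fluid_regularized_NSLK_eq} with $\delta_1,\delta_2,\eta_1,\eta_2=0$, that is, it is a weak solution of \eqref{NSLK_drag} in the sense of Definition \ref{weak_drag_NSLK}, whose regularity class is inherited from the uniform bounds above; finally the estimates \eqref{BD_energy_drag} and \eqref{BD_entropy_drag} follow from the weak lower semicontinuity of the energy and BD-entropy functionals along these convergences.

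The step I expect to be the genuine obstacle is the strong convergence of the momentum $\rho_\epsilon u_\epsilon$, equivalently of $\sqrt{\rho_\epsilon}\,u_\epsilon$: as $\eta_1\to0$ the density loses the uniform lower bound \eqref{bound_below_rho_N} and may vanish, so the quadratic and cubic terms in $u$ can no longer be controlled pointwise and one cannot simply divide the momentum by $\rho$. The saving feature at this stage is precisely the drag regularization, which bounds $u_\epsilon$ in $L^2$ and $\rho_\epsilon^{1/4}u_\epsilon$ in $L^4([0,T[\times\T^d)$ independently of $\eta_1$; coupled with the $H^2$-control of $\sqrt{\rho_\epsilon}$ coming from the BD-entropy, it yields enough compactness to pass to the limit in every nonlinear term. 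The more delicate degenerate regime, in which $r_0$ and $r_1$ are themselves sent to zero and $\sqrt{\rho}\,\mathbb{D}u$ must be reconstructed through the truncation $\mathbb{S}_N(u)$ of Definition \ref{weak_NSLK}, is left to the subsequent passage from \eqref{NSLK_drag} to the drag-free system \eqref{NSLK}.
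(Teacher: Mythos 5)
Your proposal is correct and follows essentially the same route as the paper, whose proof of Proposition \ref{prop_NSLK_drag} simply defers to \cite{carles2019}, where the limits are taken as $\delta_1,\delta_2\to0$ first and then $\eta_1,\eta_2\to0$: your sketch reconstructs precisely that argument (uniform energy/BD-entropy bounds with the $(\delta_1+\delta_2)C_2$ remainder killed by sending the $\delta$'s first, Aubin--Lions compactness for $\sqrt{\rho_\epsilon}$ and the momentum, drag-induced integrability of $u_\epsilon$, weighted vanishing of the regularizing terms, and lower semicontinuity), deferring the same interpolation details to the same references. Your splitting of the limits one parameter at a time rather than in pairs is immaterial, since it respects the only essential constraint that the $\delta$'s vanish before the $\eta$'s.
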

\begin{proof}
The proof is the exact same as the one appearing in \cite{carles2019}, where the authors first let $\delta_1, \delta_2 \rightarrow 0$ and then perform the limit $\eta_1,\eta_2 \rightarrow 0$, so we refer to it for the details.
\end{proof}

\subsection{The limit $r_0,r_1 \rightarrow 0$}
In \cite{vasseur2018}, in order to pass to the limit $r_0,r_1 \rightarrow 0$, the authors have to introduce a new type of solutions to the Navier-Stokes-Korteweg system called renormalised solutions. In our framework these solutions are defined as follows:

 \begin{definition} 
Let $T >0$. We say that $(\rho,u)$ is a \textbf{renormalised weak solution} of the NSLK system with drag forces \eqref{regularized_NSLK} in $\left[ 0,T \right[ \times \T^d$ with initial data $(\sqrt{\rho_0},(\sqrt{\rho} u)_0) \in H^1(\T^d) \times  L^2(\T^d)$, if there exists locally integrable functions $\sqrt{\rho}$, $ \sqrt{\rho} u$ such that, by defining $\rho := \sqrt{\rho}^2$ and $u:= \sqrt{\rho} u/\sqrt{\rho}$, the following holds:

\begin{enumerate}[label=(\roman*)]

    \item The global regularity:
    \begin{align*}
    \sqrt{\rho} \in L^{\infty}(\left[ 0,T \right[;H^1(\T^d)), \ \ \ 
      \sqrt{\rho} u  \in L^{\infty}(\left[ 0,T \right[;L^2(\T^d)), \\
      \hbar \nabla^2 \sqrt{\rho} \in L^2(\left[ 0,T \right[;L^2(\T^d)), \ \ \ 
      \mathbb{T}_N(u) \in L^2(\left[ 0,T \right[;L^2(\T^d)),     \\
      \sqrt{\hbar} \nabla \rho^{1/4} \in L^4(\left[ 0,T \right[;L^4(\T^d)),  \ \ \
      r_1^{1/4} \rho^{1/4} u \in L^4(\left[ 0,T \right[;L^4(\T^d)), \\
      r_0^{1/2} u \in L^2(\left[ 0,T \right[;L^2(\T^d)),  \ \ \ 
      r_0 \log \rho \in L^{\infty}(\left[ 0,T \right[;H^1(\T^d)),
    \end{align*}  
      where $\mathbb{T}_N(u)$ is defined as in Definition \ref{weak_NSLK}, and with the compatibility condition
      \[\sqrt{\rho} \geq 0 \text{ a.e. on } \left[0,T\right[ \times \T^d, \ \ \ 
      \sqrt{\rho} u =0 \text{ a.e. on } \left\{ \rho=0 \right\}. \]
      
      \item For any function $\varphi \in W^{2,\infty}(\R^d)$, there exists two measures $f_{\varphi}$, $g_{\varphi} \in \mathcal{M}( \left]0, T \right[ \times \T^d)$ with
      \[ \| f_{\varphi} \|_{\mathcal{M}( \left]0, T \right[ \times \T^d)} + \| g_{\varphi}\|_{\mathcal{M}( \left]0, T \right[ \times \T^d)} \leq C \| \nabla^2 \varphi \|_{L^{\infty}(\R^d)},   \]
      where the constant $C$ depends only on the solution $(\rho,u)$ such that for any test function $\eta \in \mathcal{C}_0^{\infty}(\left[ 0,T \right[ \times \T^d)$, 
      \begin{equation} \label{continuity_renormalized}
          \int_0^T \int_{\T^d} (\rho \partial_t \eta + \rho u \cdot \nabla \eta )dx dt + \int_{\T^d} \rho_0 \eta(0) dx =0,
      \end{equation}  
      and for any test function $\zeta \in \mathcal{C}_0^{\infty}(\left[ 0,T \right[ \times \T^d; \T^d)$,
      \begin{multline} \label{fluid_renormalized}
      \int_0^T \int_{\T^d} \left( \rho \varphi(u) \cdot \partial_t \zeta + \rho \varphi(u)  u : \nabla \zeta + \left[ \lambda \rho \Div (\zeta) - \mu \rho u \cdot \zeta - r_0 u - r_1 \rho |u|^2 u
      \right. \right. \\ \left. \left.
      + \hbar^2  \nabla \sqrt{\rho} \otimes \nabla \sqrt{\rho} : \nabla \zeta - \frac{\hbar^2 }{4} \rho \Delta \Div \zeta  - \nu \sqrt{\rho} \mathbb{S}_N(u) \cdot \nabla \zeta \right] \varphi'(u) \right) dx dt  
      +  \int_{\T^d} \rho_0 u_0 \zeta(0) dx = \langle f_{\varphi}, \zeta \rangle, \end{multline}
      
      with $ \mathbb{S}_N(u)= (\mathbb{T}_N(u) + \mathbb{T}_N(u)^\top)/2$ and where $\mathbb{T}_N(u) $ is defined through the compatibility condition
      \[ \sqrt{\rho} \ \varphi_i'(u) (\mathbb{T}_N(u))_{j,k} = \partial_j(\rho \varphi_i'(u) u_k) - 2  \sqrt{\rho} u_k \partial_j \sqrt{\rho} + g_{\varphi}, \ \ \ \forall i,j,k \in \left\{ 1, \ldots, d \right\} .\]    
      
      \item  For any test function $\xi \in \mathcal{C}_0^{\infty}(\T^d)$, 
      \[ \lim\limits_{\substack{t \to 0 }} \int_{\T^d} \rho(t,x) \xi(x) dx =  \int_{\T^d} \rho_0(t,x) \xi(x) dx,\]
      \[ \lim\limits_{\substack{t \to 0 }} \int_{\T^d} \rho(t,x) u(t,x) \xi(x) dx =  \int_{\T^d} \rho_0(t,x) u_0(t,x) \xi(x) dx.\]

\end{enumerate}
\label{renormalised_NSLK}
\end{definition}

Note that renormalized solutions allow us to obtain some stability on our weak solutions, since the notion avoid the problem of concentration. In fact, in \cite{vasseur2018}, the authors have proved the following lemma:
\begin{lemma} \label{renormalized_lemma}
Let $T>0$, then:
\begin{itemize}
    \item For $r_0$, $r_1 \geq 0$, any renormalised weak solution of the NSLK system with drag forces \eqref{NSLK_drag} is also a weak solution (in the sense of Definition \ref{weak_drag_NSLK} if $r_0$, $r_1 >0$ or Definition \ref{weak_NSLK} is $r_0$, $r_1=0$). 
    \item For $r_0$, $r_1 >0$, the two notions are equivalent: any weak solution of the NSLK system with drag forces \eqref{NSLK_drag} is also a renormalized solution of the same system.
\end{itemize}
\end{lemma}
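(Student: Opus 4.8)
The plan is to treat the two implications of the lemma separately, since they go in opposite directions and require different tools. For the first bullet (renormalised $\Rightarrow$ weak) the guiding idea is that the ordinary weak formulation is nothing but the renormalised formulation \eqref{fluid_renormalized} evaluated at $\varphi=\mathrm{id}$: for the identity one has $\varphi'=I$ and $\varphi''=0$, so the measures $f_\varphi,g_\varphi$ formally vanish and \eqref{fluid_renormalized} collapses onto \eqref{fluid_weak_NSLK} (when $r_0,r_1=0$) or onto \eqref{fluid_regularized_NSLK_eq} with $\delta_i,\eta_i=0$ (when $r_0,r_1>0$). Since $\mathrm{id}\notin W^{2,\infty}(\R^d)$, I would realise this limit through truncations $\varphi_n(v):=n\,\phi(v/n)$, where $\phi\in W^{2,\infty}(\R^d;\R^d)$ coincides with the identity near the origin and saturates at infinity. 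For the second bullet (weak $\Rightarrow$ renormalised when $r_0,r_1>0$) I would instead run a DiPerna--Lions renormalisation: mollify the velocity in space, apply the classical chain rule to the smooth approximation, and let the mollification parameter tend to zero, identifying the residual commutators as the defect measures $f_\varphi,g_\varphi$.

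For the first implication, each $\varphi_n$ lies in $W^{2,\infty}(\R^d)$, with $\varphi_n'(v)=\phi'(v/n)\to I$ pointwise and boundedly, while $\|\nabla^2\varphi_n\|_{L^\infty}=n^{-1}\|\nabla^2\phi\|_{L^\infty}\to 0$. By the mass bound in Definition \ref{renormalised_NSLK} this forces $\|f_{\varphi_n}\|_{\mathcal M}+\|g_{\varphi_n}\|_{\mathcal M}\to 0$, so it suffices to pass to the limit in \eqref{fluid_renormalized}. Using $|\varphi_n(v)|\le L|v|$ with $L$ the Lipschitz constant of $\phi$, the integrability $\rho|u|=\sqrt{\rho}\,(\sqrt{\rho}u)\in L^\infty_tL^1_x$ and $\rho|u|^2=|\sqrt{\rho}u|^2\in L^\infty_tL^1_x$ coming from the regularity list of Definition \ref{renormalised_NSLK} supplies dominating functions, so dominated convergence handles the transport and convective terms $\rho\varphi_n(u)\cdot\partial_t\zeta$ and $\rho\varphi_n(u)\,u:\nabla\zeta$; the bracketed pressure, drag, Korteweg and viscous terms are integrable in their own right and are multiplied by $\varphi_n'(u)\to I$, so they converge as well. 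The compatibility relation for $\mathbb{T}_N(u)$ reduces to the one of Definition \ref{weak_drag_NSLK} (resp.\ Definition \ref{weak_NSLK}) as $g_{\varphi_n}\to 0$, and since \eqref{continuity_renormalized} already coincides with \eqref{continuity_regularized_NSLK_eq}, one obtains a weak solution.

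For the converse, when $r_0,r_1>0$ the drag terms upgrade the regularity so that $u$, $\nabla u$ and $\sqrt{\rho}\,\mathbb{D}u\sim\mathbb{S}_N(u)$ are genuine objects: in particular $r_0^{1/2}u\in L^2$, $r_1^{1/4}\rho^{1/4}u\in L^4$ and $\mathbb{T}_N(u)\in L^2$. I would set $u_\eps:=u*\omega_\eps$, apply the smooth chain rule to write the equation satisfied by $\varphi(u_\eps)$, and then let $\eps\to 0$. The only terms not passing directly to the limit are the Friedrichs-type commutators between convolution and the nonlinear operations (the convection $u\cdot\nabla u$ and the degenerate-viscosity flux); by the DiPerna--Lions commutator lemma these converge, and the $\rho$-weighted gradient control encoded in $\mathbb{T}_N(u)$ lets one bound the limiting objects by $\|f_\varphi\|_{\mathcal M}+\|g_\varphi\|_{\mathcal M}\le C\,\|\nabla^2\varphi\|_{L^\infty}$ through Cauchy--Schwarz, exactly as demanded by Definition \ref{renormalised_NSLK}. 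This reproduces the identity \eqref{fluid_renormalized} together with the compatibility condition for $\mathbb{T}_N(u)$.

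The main obstacle is the second implication, and specifically the commutator analysis in the degenerate setting: because only the weighted gradient $\sqrt{\rho}\,\nabla u$ (through $\mathbb{T}_N(u)$) is controlled in $L^2$, and \emph{not} $\nabla u$ itself, the renormalisation cannot be exact and the defect measures genuinely persist. Extracting the sharp bound $C\|\nabla^2\varphi\|_{L^\infty}$ requires tracking the set where the density vanishes and estimating the commutators in that vacuum region, together with a careful treatment of the third-order quantum Korteweg term $\tfrac{\hbar^2}{4}\rho\,\Delta\Div\zeta$ under the chain rule. All of these steps follow the scheme of \cite{vasseur2018}, to which we refer for the complete argument.
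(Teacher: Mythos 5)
The paper does not actually prove this lemma: it is quoted verbatim from \cite{vasseur2018} (``the authors have proved the following lemma''), so there is no internal proof to compare against, and your proposal should be measured against that reference. Your reconstruction of the first bullet is correct and is the standard argument: with $\varphi_n(v)=n\,\phi(v/n)$ one has $\varphi_n\in W^{2,\infty}$, $\varphi_n'(u)\to I$ boundedly and a.e., and $\|\nabla^2\varphi_n\|_{L^\infty}=O(1/n)$, so the measure bound in Definition \ref{renormalised_NSLK} forces $\|f_{\varphi_n}\|_{\mathcal M}+\|g_{\varphi_n}\|_{\mathcal M}\to 0$, while the dominations you invoke ($\rho|u|=\sqrt{\rho}\,|\sqrt{\rho}u|\in L^\infty_tL^1_x$, $\rho|u|^2\in L^\infty_tL^1_x$, and $\rho|u|^3\in L^1$ from $r_1^{1/4}\rho^{1/4}u\in L^4$ together with $\rho^{1/4}\in L^4$) are exactly what is needed to pass \eqref{fluid_renormalized} and the compatibility condition for $\mathbb{T}_N(u)$ to the limit, recovering \eqref{fluid_regularized_NSLK_eq} with $\delta_i=\eta_i=0$ (resp.\ \eqref{fluid_weak_NSLK}). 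For the second bullet your DiPerna--Lions mollification sketch is a plausible heuristic, and you correctly flag the genuine obstruction — only the weighted gradient $\sqrt{\rho}\,\nabla u$ (through $\mathbb{T}_N(u)$) lies in $L^2$, not $\nabla u$ itself, so renormalisation cannot be exact and the defect measures must absorb the vacuum region — but as written that direction remains a deferral to \cite{vasseur2018} rather than a proof. Since the paper itself treats the entire lemma as a citation, your proposal is, if anything, more detailed than the paper on the first implication and on par with it on the second; just be aware that the hard analytical content of the lemma (the commutator estimates producing $\|f_\varphi\|_{\mathcal M}+\|g_\varphi\|_{\mathcal M}\leq C\|\nabla^2\varphi\|_{L^\infty}$ from a weak solution with $r_0,r_1>0$) has not been supplied by either you or the paper, and lives entirely in \cite{vasseur2018}.
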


Thanks to Lemma \ref{renormalized_lemma} and Proposition \ref{prop_NSLK_drag}, we have constructed a renormalized solution of the NSLK system with drag forces \eqref{NSLK_drag}. In order to prove Theorem \ref{prop_NSLK}, the only remaining step is to prove the compactness of this renormalized solution in terms of the parameters $r_0$ and $r_1$.\\

\begin{proofof}{Theorem}{\ref{prop_NSLK}}
In this proof, we will denote $r=(r_0,r_1)$ and $(\rho_r,u_r)_r$ the sequence of weak solutions to the NSLK system with drag forces \eqref{NSLK_drag} constructed in Proposition \ref{prop_NSLK_drag}. Again, all the convergences below are made up to extraction that we do not relabel for conciseness.\\

We are first going to pass into the limit in the continuity equation \eqref{continuity_renormalized} only using a priori estimates (i) from Definition \ref{renormalised_NSLK} which do not depend on $r$. From the continuous embedding $H^1(\T^d) \xhookrightarrow{} L^6(\T^d)$ as $d \leq 3$, we get that $\sqrt{\rho_r} \in L^{\infty}( \left[0,T \right[; L^6(\T^d))$, so combined with the fact that $\sqrt{\rho_r} u_r \in L^{\infty}( \left[0,T \right[; L^2(\T^d))$ we get by Lemma \ref{lemma_power} that $\rho_r u_r$ is uniformly bounded in $L^{\infty}( \left[0,T \right[; L^{3/2}(\T^d))$. Moreover, using the continuity equation \eqref{continuity_renormalized} we get that $\partial_t \rho_r$ is uniformly bounded in $L^{\infty}( \left[0,T \right[; W^{-1,3/2}(\T^d))$. Writing $ \nabla \rho_r = 2 \sqrt{\rho_r} \nabla \sqrt{\rho_r}$, and recalling that $\nabla \sqrt{\rho_r} \in L^{\infty}( \left[0,T \right[; L^2(\T^d))$, we get that $\nabla \rho_r$ is also uniformly bounded in $L^{\infty}( \left[0,T \right[; L^{3/2}(\T^d))$, hence by Aubin-Lions lemma we get that
\[ \rho_r \rightarrow \rho \ \text{strongly in} \ \mathcal{C}^0(\left[0,T \right[; L^p(\T^d)), \ \ \ \text{for} \ 1 \leq p < 3. \]
From \eqref{fluid_renormalized} we get that $\partial(\rho_r u_r)$ is uniformly bounded in $L^2( \left[0,T \right[; H^{-N}(\T^d))$ for a $N$ large enough, and from the identity
\[ \nabla ( \rho_r u_r) = \sqrt{\rho_r} \sqrt{\rho_r} \nabla u_r + \nabla \rho_r \cdot u_r =  \sqrt{\rho_r} \mathbb{T}_N(u) + 2 \nabla  \sqrt{\rho_r} \cdot  \sqrt{\rho_r} u_r   \]
and the a priori estimates (i) from Definition \ref{renormalised_NSLK} we get that $\nabla ( \rho_r u_r)$ is uniformly bounded in  $L^2( \left[0,T \right[; L^{3/2}(\T^d))$. As we already know that $\rho_r u_r$ is uniformly bounded in $L^{\infty}( \left[0,T \right[; L^{3/2}(\T^d))$, we get from Aubin-Lions lemma that
\begin{align*}
    & \rho_r u_r \rightharpoonup \rho u \ \text{weakly in} \ \mathcal{C}^0(\left[0,T \right[; L^{3/2}(\T^d)), \ \ \ \text{for} \ 1 \leq p < 3. \\
    & \rho_r u_r\rightarrow \rho u \ \text{strongly in} \ L^p(\left[0,T \right[; L^q(\T^d)), \ \ \ \text{for} \ 1 \leq p < \infty \ \text{and} \ 1 \leq q < \frac{3}{2}. 
\end{align*} 
In particular, $(\rho_r)_r$ and $(\rho_r u_r)_r$ are uniformly bounded in $\mathcal{C}^0(\left[0,T \right[; L^1(\T^d))$, so we can pass to the limit $r \rightarrow 0$ in the continuity equation \eqref{continuity_renormalized}, and we also get part (iii) of Definition \ref{renormalised_NSLK}. \\

We are now going to pass to pass to the limit in the momentum equation \eqref{fluid_renormalized}. Using the previous convergences and the estimates independent of $r$ on $\nabla^2 \sqrt{\rho_r}$ and $\mathbb{T}_{N,r}$ that ensures some weak convergence of these quantities in $L^2(\left[0,T \right[; L^2(\T^d))$, we can pass to the limit $r \rightarrow 0$ in the left hand side of equation \eqref{fluid_renormalized}. Indeed, as in \cite{vasseur2018} and \cite{carles2019}, introducing $u=\rho u/\rho \mathbf{1}_{\left\{ \rho > 0 \right\}}$ we can show with the previous convergences that $\rho_r \rightarrow \rho$ and $u_r \rightarrow u$ a.e., and consequently that $\rho_r^{\alpha} \phi(u_r) \rightarrow \rho^{\alpha} \phi(u)$ in $L^p(\left[0,T \right[ \times \T^d)$ for any bounded $\phi : \R^d \rightarrow \R^d$, with $\alpha < 6$ and $p < 6/\alpha$. For the right hand side of equation \eqref{fluid_renormalized}, we remark that the sequence $(f_{\phi,r})_r$ is uniformly bounded in measures, so it converges to a measure $f_{\phi}$ with the same bound. Note that we can similarly pass to the limit in the renormalized compatibility condition for $\mathbb{T}_{N,r}$ and obtain the renormalized condition for $\mathbb{T}_N(u)$. \\

Hence $(\rho,u)$ defines a renormalized weak solution of the NSLK system \eqref{NSLK} in the sense of Definition \ref{renormalised_NSLK} taking $r_0=r_1=0$, which also stands as a weak solution of the NSLK system \eqref{NSLK}. By Fatou's lemma, we then get that $(\rho,u)$ satisfies \eqref{eq_energy_NSLK}, which completes the proof of Theorem \ref{prop_NSLK}.
\end{proofof}

\begin{remark}
Defining
\[ \mathcal{E}(\rho,u):=   \mathcal{E}_{\drag}(\rho,u)|_{r_0,r_1=0} \ \ \ \text{and} \ \ \  \mathcal{D}(\rho,u):=  \mathcal{D}_{\drag}(\rho,u)|_{r_0,r_1=0},\]
and as
\[  -r_0 \int_{\rho>1} \log \rho\geq -r_0 \| \rho_0 \|_{L^1(\T^d)} \ \ \ \text{and} \ \ \ 0 \leq -r_0 \int_{\rho<1} \log \rho \leq C_2(E_{\drag}(0), \mathcal{E}_{\drag}(0)) + r_0 \| \rho_0 \|_{L^1(\T^d)},  \]
from respectively Remark \ref{remark_log_rho} and equation \eqref{BD_entropy_drag}, which implies that
\[ -r_0 \int_{\T^d} \log \rho \underset{r_0 \to 0}{\longrightarrow} 0,  \]
we also gets from the previous proof that $(\rho,u)$ satisfies the following BD-entropy estimate (which will be useful in the following section):
 \begin{equation} \label{BD_entropy_NSLK}
     \sup_{t \in \left[0 , T \right[} \mathcal{E}(\rho,u)(t) + \int_0^T \mathcal{D}(\rho,u)(s) ds \leq  C(E(0), \mathcal{E}(0)) .
 \end{equation}

\end{remark}

\section{The isothermal Euler-Langevin-Korteweg system}
Following \cite{Bresch2019}, by denoting $w=u + \frac{\nu}{2} \nabla \log \rho$ and $v=  \log \rho$, we consider the augmented Navier-Stokes-Langevin-Korteweg system of equations:
\begin{subequations} \label{NSLK_augmented}
\begin{empheq}[left=\empheqlbrace]{align}
& \partial_t \rho + \Div (\rho u) =0, \label{continuityNSLK_augmented}  \\
& \partial_t (\rho w) + \Div \left( \rho w \otimes u \right) + \lambda' \nabla \rho + \mu \rho w= \frac{\hbar_{\nu}}{2} \Div( \rho \nabla v) + \frac{\nu}{2} \Div ( \rho \nabla w ), \label{fluidNSLK_augmented} \\
& \partial_t (\rho v) + \Div \left( \rho v \otimes u \right) + \Div \left( \rho \nabla u^\top \right) =0. \label{fluid2NSLK_augmented} 
\end{empheq} 
\end{subequations}
where we denote $\hbar_{\nu}^2 = \hbar^2 - \nu^2 >0 $ and $\lambda'= \lambda-\frac{\mu \nu}{2}>0$ (as we are letting $\nu \rightarrow 0$, cf Remark \ref{remark_lambda_pos} below).  
\begin{remark} \label{remark_lambda_pos}
Note that here the isothermal pressure term $\lambda \nabla \rho$ can absorb the contribution of the dissipative Langevin term, through the identity
\[ \lambda \nabla \rho + \mu \rho u = \lambda \nabla \rho + \mu \rho (w - \frac{\nu}{2} \nabla \log \rho) =  (\lambda - \frac{\mu \nu}{2} ) \nabla \rho + \mu \rho w  .\]
As we assume that $\lambda >0$ and as we are going to let $\nu \rightarrow 0$ in this section, we can take $\nu$ small enough such that $\lambda'=\lambda - \frac{\mu \nu}{2} >0$, which will be crucial in the following. In fact the compatibility of these two terms is a special feature of the isothermal pressure and the Langevin potential, which may not work for other pressure laws (for example the classical barotropic pressure $\lambda \rho^{\gamma}$, $\gamma> 1$), and reinforce the link between these two quantities.
\end{remark}

By denoting $\overline{v} =\hbar_{\nu} v/2$, we also rewrite the associated BD entropy estimate \eqref{eq_BD_entropy_NSLK} in terms of $\rho$, $w$ and $\overline{v}$, which stands as the energy estimate of the augmented system \eqref{NSLK_augmented}: 
\begin{equation*}  \mathcal{E}_{NSLK}(t) +  \int_0^t \mathcal{D}_{NSLK}(s) ds \leq E_0,  \end{equation*}
where
\begin{gather*}
    \mathcal{E}_{NSLK}(\rho,w,\overline{v})= \int_{\T^d} \left( \frac{1}{2} \rho |w|^2 +\frac{1}{2} \rho |\overline{v}|^2+ \lambda' H(\rho) \right) , \\
    \mathcal{D}_{NSLK}(\rho,w,\overline{v}) = \frac{\nu}{2} \int_{\T^d} \left(  \rho |\nabla w|^2 + \rho |\nabla \overline{v}|^2 + \frac{4\lambda'}{\hbar_{\nu}^2} \rho |\overline{v}|^2 \right)   + \mu \int_{\T^d} \rho |w|^2.
\end{gather*}

We also introduce the relative entropy entropy functional of the augmented NSLK system:
\begin{multline*}   \mathcal{E}_{NSLK}(\rho, w,\overline{v} | R,W,\overline{V})(t)
= \frac{1}{2} \int_{\T^d} \rho (|\overline{v}-\overline{V}|^2  +|w-W|^2 )+ \lambda' \int_{\T^d} H(\rho|R) \\ + \frac{\nu}{2} \int_0^t \int_{\T^d}  \rho \left( \left|\frac{ \T_N(\overline{v})}{\sqrt{\rho}}-\nabla \overline{V}\right|^2 + \left|\frac{ \T_N (w)}{\sqrt{\rho}}-\nabla W \right|^2  \right) + \mu \int_0^t \int_{\T^d}  \rho |w-W|^2,  
\end{multline*}
where $\T_N(\overline{v})$ and $\T_N (w)$ are defined through the compatibility conditions
\[ \sqrt{\rho} \T_N(\overline{v}) = \nabla( \sqrt{\rho} \sqrt{\rho} \overline{v} ) - 2  \sqrt{\rho} \overline{v} \otimes \nabla \sqrt{\rho} \ \ \ \text{and} \ \ \ \sqrt{\rho} \T_N(w) = \nabla( \sqrt{\rho} \sqrt{\rho} w ) - 2  \sqrt{\rho} w \otimes \nabla \sqrt{\rho}  .\]    

We now introduce the definitions of weak and strong solutions to the augmented NSLK system:

\begin{definition} 
Let $T >0$ and $(\rho_0,w_0,\overline{v}_0=\frac{\hbar}{2} \nabla \log \rho_0)$ such that $\mathcal{E}_{NSLK}(\rho_0,u_0,\overline{v}_0) < \infty$. We say that $(\rho,w,\overline{v})$ is a \textbf{weak solution} of the augmented system \eqref{NSLK_augmented} in $\left[ 0,T \right[ \times \T^d$ with initial data $(\rho_0,w_0,\overline{v}_0)$, if there exists locally integrable functions $\sqrt{\rho}$, $\sqrt{\rho}u $ such that, by defining $\rho := \sqrt{\rho}^2$, $\rho u:= \sqrt{\rho} \sqrt{\rho}u$, $v=  \log \rho$ and $w=u + \frac{\nu}{2} \nabla v$, the following holds:

\begin{enumerate}[label=(\roman*)]

    \item The global regularity (i) of Definition \ref{weak_NSLK} on $\sqrt{\rho}$ and $\sqrt{\rho}u$ is verified.
      
     \item For any test function $\eta \in \mathcal{C}_0^{\infty}(\left[ 0,T \right[ \times \T^d)$, $\rho$ and $\rho u$ verify equation \eqref{continuity_weak_NSLK}, for any test function $\zeta \in \mathcal{C}_0^{\infty}(\left[ 0,T \right[ \times \T^d; \T^d)$,
      \begin{multline*} \int_0^T \int_{\T^d} \left( \rho w \cdot \partial_t \zeta + \sqrt{\rho} w \otimes \sqrt{\rho} u : \nabla \zeta +\lambda' \rho \Div (\zeta) - \mu \rho w \cdot \zeta      +  \sqrt{ \rho} \T_N(\overline{v}) : \nabla \zeta \right. \\ \left. + \frac{\nu}{2} \sqrt{ \rho} \T_N(w) : \nabla \zeta   \right) dx dt  
      +  \int_{\T^d} \rho_0 u_0 \zeta(0) dx =0, \label{fluid_weak_augmented_NSLK} \end{multline*}
     and for any test function $\xi \in \mathcal{C}_0^{\infty}(\left[ 0,T \right[ \times \T^d; \T^d)$, 
     \begin{equation*}
         \int_0^T \int_{\T^d} \left( \rho v \cdot \partial_t \xi  \sqrt{\rho} v \otimes \sqrt{\rho} u : \nabla \xi + \sqrt{\rho} \T_N(u)^\top : \nabla \xi\right) dx dt  
      +  \int_{\T^d} \rho_0 v_0 \xi(0) dx =0.
     \end{equation*}
      
      \item  The compatibility condition (iii) of Definition \ref{weak_NSLK} on $\rho$ and $\rho u$ is satisfied.
\end{enumerate}
\label{weak_NSLK_augmented}
\end{definition}

\begin{remark}
Let us remark here that weak solutions of the NSLK system \eqref{NSLK} are also weak solutions of the augmented NSLK system \eqref{NSLK_augmented}. In fact, taking the gradient of equation \eqref{continuityNSLK} (which is satisfied in the distribution sense), we get that
\[ \partial_t \nabla \rho + \Div( \nabla (\rho u)^\top) =0,   \]
so by definition of $\sqrt{\rho} \T_N$ and expression of $v$ we can write
\[ \partial(\rho v) + \Div (\rho v \otimes u) + \Div(\sqrt{\rho} \T_N(u)^\top) =0.   \]
\end{remark}

\begin{definition} 
Let $T >0$ and $(R_0,W_0,\overline{V}_0=\frac{\hbar_{\nu}}{2} \nabla \log R_0)$ such that $\mathcal{E}_{NSLK}(R_0,W_0,\overline{V}_0) < \infty$. We say that $(R,W,\overline{V})$ is a \textbf{strong solution} of the augmented system \eqref{NSLK_augmented} in $\left[ 0,T \right[ \times \T^d$ with initial data $(R_0,W_0,\overline{V}_0)$, if the following holds:

\begin{enumerate}[label=(\roman*)]

    \item The global regularity:
    \begin{gather*} 
    0 < \inf_{\left] 0, T \right[ \times \T^d} R \leq R \leq \sup_{\left] 0, T \right[ \times \T^d} R < \infty, \\
    \nabla R \in L^2(\left[ 0,T \right[;L^{\infty}(\T^d)) \cap L^1(\left[ 0,T \right[;W^{1,\infty}(\T^d)) \\
    W, \overline{V} \in  L^{\infty}(\left[ 0,T \right[;W^{2,\infty}(\T^d)) \cap W^{1,\infty}(\left[ 0,T \right[;L^{\infty}(\T^d)) \\
    \partial_t H'(R), \ \nabla H'(R) \in L^1(\left[ 0,T \right[;L^{\infty}(\T^d)).
      \end{gather*}
      
    \item The function $R,\overline{V}$ and $W$ satisfies, for a.a. $(t,x) \in \left[ 0,T \right] \times \T^d$,
\begin{subequations} \label{NSLK_augmented_strong}
\begin{empheq}{align}
    &   \partial_t R + \Div (R U) =0, \label{continuityNSLK_augmented_strong} \\
&  R (\partial_t W  + \nabla W U) + \lambda' \nabla R + \mu R W = \frac{\hbar_{\nu}}{2} \Div( R \nabla \overline{V} ) + \frac{\nu}{2} \Div ( R \nabla W ), \label{fluidNSLK_augmented_strong} \\
&  R (\partial_t \overline{V}  + \nabla \overline{V} U) + \Div \left( R \nabla U^\top \right) =0, \label{fluid2NSLK_augmented_strong}  
\end{empheq} 
\end{subequations}
    where $U=W - \frac{\nu}{2} V$, and $\overline{V} = \frac{\hbar_{\nu}}{2} V$.
\end{enumerate}
\label{strong_sol_def_NSLK}
\end{definition}

\subsection{A Gronwall inequality}
We are going to mimic the steps of the proof given in \cite{Bresch2019}, giving a special attention to the isothermal pressure law $\lambda' \rho$ and to the new dissipation term:
\[ \mu \int_0^t \int_{\T^d}  \rho |w-W|^2. \]

\begin{proposition} \label{first_ineq}
Let $(\rho, w, \overline{v})$ be a weak solution of the augmented system \eqref{NSLK_augmented}, and let 
\[ R \in \mathcal{C}^1(\left[0 , T \right] \times \T^d ), \ \ \ R > 0, \ \ \ W,\overline{V} \in \mathcal{C}^2(\left[0,T\right] \times  \T^d ).\]
 Then we have the following inequality:
\begin{align*}  
\mathcal{E}_{NSLK}(\rho, w,\overline{v} | R,W,\overline{V})(t) & \leq  \mathcal{E}_{NSLK}(\rho, w,\overline{v} | R,W,\overline{V}) (0) \\
& + \int_0^t \int_{\T^d} \rho ( \partial_t \overline{V} \cdot (\overline{V} - \overline{v}) + (\nabla \overline{V} u) \cdot(\overline{V} - \overline{v})) \\
& + \int_0^t \int_{\T^d} \rho ( \partial_t W \cdot (W - w) + (\nabla W u) \cdot(W - w)) \\
& - \lambda' \int_0^t \int_{\T^d}  ( \partial_t(H'(R))(\rho - R) + \rho \nabla (H'(R)) \cdot u + \rho \Div W) \\
& - 2 \nu \lambda' \int_0^t \int_{\T^d} |\nabla \sqrt{\rho}|^2 +\mu \int_0^t \int_{\T^d} \left( \rho |W|^2 -\rho w \cdot W \right) + I_1,
\end{align*}
where
\begin{align*}   I_1 = \frac{\nu}{2} \int_0^t \int_{\T^d} \rho ( | \nabla \overline{V} | + | \nabla W|^2 ) - \sqrt{\rho} ( \T(\overline{v}) : \nabla \overline{V} + \T (w) : \nabla W) \\
+\frac{\hbar_{\nu}}{2} \int_0^t \int_{\T^d} \sqrt{\rho} ( \T(\overline{v}) :  \nabla W  - (\T (w))^\top : \nabla \overline{V}).  
\end{align*}
\end{proposition}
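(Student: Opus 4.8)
The argument is a \emph{modulated energy} (relative entropy) estimate of the type used in \cite{Bresch2019}, the specific points to watch being the isothermal pressure $\lambda'\rho$ and the drag term $\mu\rho w$. The plan is to expand $\mathcal{E}_{NSLK}(\rho,w,\overline{v}\,|\,R,W,\overline{V})$ into a diagonal part that coincides with the energy of the weak solution, plus a collection of cross (modulation) terms; I would then control the diagonal part by the energy estimate of the augmented system and compute the evolution of the cross terms by inserting test functions built from $(R,W,\overline{V})$ into the weak formulation of \eqref{NSLK_augmented}.

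First I would expand the squares and the relative internal energy $H(\rho|R)=H(\rho)-H(R)-H'(R)(\rho-R)$, writing
\begin{align*}
&\tfrac12\!\int_{\T^d}\!\rho\big(|\overline{v}-\overline{V}|^2+|w-W|^2\big)+\lambda'\!\int_{\T^d}\! H(\rho|R)
=\Big[\tfrac12\!\int_{\T^d}\!\rho\big(|\overline{v}|^2+|w|^2\big)+\lambda'\!\int_{\T^d}\! H(\rho)\Big]\\
&\qquad-\int_{\T^d}\!\rho\,\big(\overline{v}\cdot\overline{V}+w\cdot W\big)-\lambda'\!\int_{\T^d}\! H'(R)(\rho-R)+\tfrac12\!\int_{\T^d}\!\rho\big(|\overline{V}|^2+|W|^2\big)-\lambda'\!\int_{\T^d}\! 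H(R),
\end{align*}
and likewise split each modulated dissipation integrand, for instance $\rho\,|\T_N(\overline{v})/\sqrt{\rho}-\nabla\overline{V}|^2=|\T_N(\overline{v})|^2-2\sqrt{\rho}\,\T_N(\overline{v})\!:\!\nabla\overline{V}+\rho\,|\nabla\overline{V}|^2$ and $\rho|w-W|^2=\rho|w|^2-2\rho\,w\cdot W+\rho|W|^2$. The bracketed term is the energy $\mathcal{E}_{NSLK}(\rho,w,\overline{v})$, and the genuine squares $\tfrac\nu2|\T_N(\overline{v})|^2+\tfrac\nu2|\T_N(w)|^2+\mu\rho|w|^2$ reconstruct part of the flux $\mathcal{D}_{NSLK}$; the energy inequality $\mathcal{E}_{NSLK}(t)+\int_0^t\mathcal{D}_{NSLK}\le \mathcal{E}_{NSLK}(0)$ then bounds all of these by their value at $t=0$, producing the initial datum on the right. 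Crucially, $\mathcal{D}_{NSLK}$ also contains the term $\tfrac{2\nu\lambda'}{\hbar_\nu^2}\int\rho|\overline{v}|^2$, which, since $\overline{v}=\tfrac{\hbar_\nu}{2}\nabla\log\rho$, equals $2\nu\lambda'\int|\nabla\sqrt{\rho}|^2$; this part of $\mathcal{D}_{NSLK}$ is \emph{not} reproduced by the modulated dissipation, so moving $-\int_0^t\mathcal{D}_{NSLK}$ to the right leaves exactly the term $-2\nu\lambda'\int_0^t\int_{\T^d}|\nabla\sqrt{\rho}|^2$ of the statement.

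It then remains to differentiate the remaining cross terms in time. For $\int_{\T^d}\rho\,w\cdot W$ I would use the weak momentum equation of Definition \ref{weak_NSLK_augmented} with the (admissible, $\mathcal{C}^2$ and time-dependent) test function $\zeta=W$; for $\int_{\T^d}\rho\,\overline{v}\cdot\overline{V}$ the weak $\overline{v}$-equation of the same definition with $\xi=\overline{V}$; and for $\tfrac12\int\rho|W|^2$, $\tfrac12\int\rho|\overline{V}|^2$, $\lambda'\int H'(R)(\rho-R)$ and $\lambda'\int H(R)$ the continuity equation \eqref{continuityNSLK_augmented} tested against $\tfrac12|W|^2$, $\tfrac12|\overline{V}|^2$, $H'(R)$ and the associated combinations, the needed time regularity of $W,\overline{V},H'(R)$ being guaranteed by the hypotheses on $(R,W,\overline{V})$. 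Systematically substituting $u=w-\tfrac\nu2\nabla\log\rho$ in the transport terms coming from $\Div(\rho w\otimes u)$, one checks that the contributions organise themselves into the convective terms $\rho(\partial_t\overline{V}+\nabla\overline{V}\,u)\cdot(\overline{V}-\overline{v})$ and $\rho(\partial_t W+\nabla W\,u)\cdot(W-w)$, the pressure terms $-\lambda'\big(\partial_t(H'(R))(\rho-R)+\rho\nabla(H'(R))\cdot u+\rho\Div W\big)$ (the evolution of $-\lambda'\int H(R)$ cancelling the $\lambda'\int\nabla R\cdot U$ produced by the evolution of $-\lambda'\int H'(R)(\rho-R)$), and, combining the drag contribution $\mu\rho w$ from the momentum equation with the leftover of the modulated drag dissipation, the term $\mu(\rho|W|^2-\rho w\cdot W)$ displayed in the statement.

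Finally, the viscous and Korteweg contributions do not telescope at the available regularity. Combining the residual squares $\tfrac\nu2\rho|\nabla\overline{V}|^2$ and $\tfrac\nu2\rho|\nabla W|^2$ from the modulated-dissipation expansion with the viscous terms $\tfrac\nu2\sqrt{\rho}\,\T_N(w):\nabla W$ and the Korteweg couplings obtained by testing the term $\sqrt{\rho}\,\T_N(\overline{v}):\nabla\zeta$ of the momentum equation against $W$ and the term $\sqrt{\rho}\,\T_N(u)^\top:\nabla\xi$ of the $\overline{v}$-equation against $\overline{V}$ (using $\overline{v}=\tfrac{\hbar_\nu}{2}\nabla\log\rho$ and $u=w-\tfrac\nu2\nabla\log\rho$), all the terms that cannot be absorbed assemble, after collecting coefficients, into the remainder $I_1$. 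The main obstacle is precisely this low-regularity bookkeeping: the tensors $\T_N(\overline{v})$, $\T_N(w)$, $\T_N(u)$ are defined only through their compatibility relations and not as genuine gradients, so every integration by parts and every pairing must be read in the weak sense and justified from the $L^2$-bounds provided by the energy estimate of the augmented system and the regularity of the weak solution (Definition \ref{weak_NSLK_augmented}). Deciding which cross terms recombine into the modulated dissipation, and are thus controlled, and which survive into $I_1$, while consistently distinguishing the convective velocity $u$ from the momentum variable $w$ throughout, is the delicate heart of the computation.
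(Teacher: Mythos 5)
Your proposal is correct and follows essentially the same route as the paper's proof: you expand the relative entropy into the weak solution's energy plus cross terms, absorb the diagonal part via the energy inequality of the augmented system (correctly identifying that the unmatched dissipation piece $\tfrac{2\nu\lambda'}{\hbar_\nu^2}\int\rho|\overline{v}|^2=2\nu\lambda'\int|\nabla\sqrt{\rho}|^2$ is what produces the term $-2\nu\lambda'\int_0^t\int_{\T^d}|\nabla\sqrt{\rho}|^2$), and evolve the cross terms by testing the weak momentum and $\overline{v}$-equations against $W$ and $\overline{V}$ and the continuity equation against $\tfrac12|W|^2$, $\tfrac12|\overline{V}|^2$, $H'(R)$, exactly as in the paper's duality pairings. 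Your accounting of the drag contribution (the leftover $\mu(\rho|W|^2-2\rho w\cdot W)$ from the modulated dissipation recombining with the tested $\mu\rho w$ term to give $\mu(\rho|W|^2-\rho w\cdot W)$) and of the residual viscous/Korteweg couplings assembling into $I_1$ matches the paper's computation.
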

 
 \begin{proof}
 As we know that $(\rho, w, \overline{v})$ is a solution of \eqref{NSLK_augmented}, we can use \eqref{eq_energy_NSLK}, and using the fact that for any function $f \in W^{1,1}( \left[0,T \right[; L^1(\T^d))$ we have
 \[ \int_{\T^d} f(t,x) dx - \int_{\T^d} f(0,x) dx = \int_0^t \int_{\T^d} \partial_t f(s,x) dx ds, \]
 we get that
 \begin{align*}  
\mathcal{E}_{NSLK}(t) - \mathcal{E}_{NSLK}(0) & \leq  \int_0^t \int_{\T^d} \partial_t \left(  \frac{1}{2} \rho | \overline{V}|^2 - \rho \overline{v} \cdot \overline{V} \frac{1}{2} \rho | W|^2 - \rho w \cdot W \right) \\
& + \frac{\nu}{2} \int_0^t \int_{\T^d} \rho ( | \nabla \overline{V} | + | \nabla W|^2 ) - 2 \sqrt{\rho} ( \T(\overline{v}) : \nabla \overline{V} + \T (w) : \nabla W) \\
& - \lambda' \int_0^t \int_{\T^d} \partial_t \left( H(R) + H'(R)(\rho - R)  \right)- 2 \nu \lambda' \int_0^t \int_{\T^d} |\nabla \sqrt{\rho}|^2   \\
& + \mu \int_0^t \int_{\T^d} \left( \rho |W|^2 - 2 \rho w \cdot W \right) .
\end{align*}
We are now going to use equations from the augmented system \eqref{NSLK_augmented} in order to develop the terms in the previous inequality. Using equation \eqref{continuityNSLK_augmented}, we get that 
\begin{align*}
    \int_0^t \int_{\T^d} \partial_t \left( H(R) + H'(R)(\rho - R)  \right) 
& = \int_0^t \int_{\T^d} ( H'(R) \partial_t R + \partial_t( H'(R) ) (\rho - R) + H'(R) \partial_t ( \rho - R)) \\
& = \int_0^t \int_{\T^d} (  \partial_t( H'(R) ) (\rho - R)  - H'(R) \Div ( \rho u) ) \\
& = \int_0^t \int_{\T^d} (  \partial_t( H'(R) ) (\rho - R)  + \rho \nabla(H'(R)) \cdot u ).
\end{align*}
Then using respectively equations \eqref{fluidNSLK_augmented} and \eqref{fluid2NSLK_augmented}, we have
\begin{multline*}
    \partial_t( \rho w \cdot W) = \partial_t ( \rho w ) \cdot W + \rho w \cdot \partial_t W \\
    = \langle - \Div \left( \rho w \otimes u \right) - \lambda' \nabla \rho - \mu \rho w + \frac{\hbar_{\nu}}{2} \Div( \sqrt{\rho}  \T (\overline{v}) ) + \frac{\nu}{2} \Div ( \sqrt{\rho}  \T (w) ) | W \rangle_{W^{-2,1}(\T^d) \times W^{2, \infty} ( \T^d)} + \rho w \cdot \partial_t W,
\end{multline*}
and 
\begin{multline*}
    \partial_t( \rho \overline{v} \cdot \overline{V}) = \partial_t ( \rho \overline{v} ) \cdot \overline{V} + \rho \overline{v} \cdot \partial_t \overline{V} \\
  = \langle - \Div \left( \rho \overline{v} \otimes u \right) - \frac{\hbar_{\nu}}{2} \Div( \sqrt{\rho}  \T (w)^\top ) + \frac{\nu}{2} \Div ( \sqrt{\rho}  \T (\overline{v})   | \overline{V} \rangle_{W^{-2,1}(\T^d) \times W^{2, \infty} ( \T^d)} + \rho \overline{v} \cdot \partial_t \overline{V}.
 \end{multline*}
 
 Finally, we develop the quantities
 \[ \partial_t(  \frac{1}{2} \rho | \overline{V}|^2 ) = \frac{1}{2} \partial_t \rho |\overline{V}|^2 + \rho \overline{V} \cdot \partial_t \overline{V}, \ \ \  \partial_t(  \frac{1}{2} \rho | W|^2 ) = \frac{1}{2} \partial_t \rho |W|^2 + \rho W \cdot \partial_t W,   \]
 and since $\nabla \overline{v}$, $\nabla \overline{V}$ are symmetric matrices (recall that $v$ and $V$ are gradient of functions), we get from \eqref{continuityNSLK_augmented} and integration by parts:
 \begin{align*}  
\mathcal{E}_{NSLK}(t) - \mathcal{E}_{NSLK}(0) & \leq  \int_0^t \int_{\T^d} \rho  \partial_t \overline{V} \cdot (\overline{V} - \overline{v}) + \int_0^t \int_{\T^d} \rho (\nabla \overline{V} u) \cdot(\overline{V} - \overline{v}) \\
& + \int_0^t \int_{\T^d} \rho  \partial_t W \cdot (W - w) +  \int_0^t \int_{\T^d} \rho  (\nabla W u) \cdot(W - w) \\
& + \frac{\nu}{2} \int_0^t \int_{\T^d} \rho ( | \nabla \overline{V} | + | \nabla W|^2 ) - 2 \sqrt{\rho} ( \T(\overline{v}) : \nabla \overline{V} + \T (w) : \nabla W) \\
& - \lambda' \int_0^t \int_{\T^d}   \partial_t (H'(R))  (\rho - R)  - \lambda' \int_0^t \int_{\T^d} \rho \nabla(H'(R)) \cdot u  \\
& - 2 \nu \lambda' \int_0^t \int_{\T^d} |\nabla \sqrt{\rho}|^2 + \mu \int_0^t \int_{\T^d} \left( \rho |W|^2 - 2 \rho w \cdot W \right) \\
& - \lambda' \int_0^t \int_{\T^d} \rho \Div W + \mu \int_0^t \int_{\T^d} \rho w \cdot W   \\
& + \frac{\nu}{2} \int_0^t \int_{\T^d} \sqrt{\rho} ( \T(\overline{v}) : \nabla \overline{V} + \T (w) : \nabla W)  \\
& +\frac{\hbar_{\nu}}{2} \int_0^t \int_{\T^d} \sqrt{\rho} ( \T(\overline{v}) :  \nabla W  - (\T (w))^\top : \nabla \overline{V}) .
\end{align*}
 The result follows from simplifying the previous expression.
\end{proof}

\begin{proposition} \label{secund_ineq}
Let $(R, W,\overline{V})$ be a strong solution of \eqref{NSLK_augmented}, then any weak solution $(\rho, w, \overline{v})$ of the augmented system \eqref{NSLK_augmented} satisfies:
\begin{align*}  \label{secund_inequality}
\mathcal{E}_{NSLK}(t) - \mathcal{E}_{NSLK}(0) & \leq \int_0^t \int_{\T^d} \rho (\nabla \overline{V} (u-U)) \cdot(\overline{V} - \overline{v}) +  \int_0^t \int_{\T^d} \rho  (\nabla W (u-U) ) \cdot(W - w) \\
& + \lambda' \frac{\nu}{ \hbar_{\nu}} \int_0^t \int_{\T^d}  ( \rho \nabla (H'(R)) \cdot ( \overline{v} - \overline{V}) - \rho \Div \overline{V} ) - 2 \nu \lambda' \int_0^t \int_{\T^d} |\nabla \sqrt{\rho}|^2  \\
&  + \frac{\nu}{2}  \int_0^t \int_{\T^d}  \frac{\rho}{R} (  \Div ( R \nabla \overline{V} ) \cdot ( \overline{V} - \overline{v}) + \Div (R \nabla W ) \cdot ( W-w))\\
& +  \frac{\hbar_{\nu}}{2} \int_0^t \int_{\T^d}  \frac{\rho}{R} (  \Div ( R \nabla \overline{V} ) \cdot (W-w) - \Div (R \nabla W^\top ) \cdot ( \overline{V} - \overline{v})) + I_1.
\end{align*}
\end{proposition}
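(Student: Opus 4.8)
The starting point is Proposition~\ref{first_ineq}, whose conclusion holds for \emph{any} smooth triple $(R,W,\overline{V})$. The plan is to specialize that inequality to the case where $(R,W,\overline{V})$ is a strong solution of \eqref{NSLK_augmented}, using the three equations \eqref{NSLK_augmented_strong} to eliminate every time derivative appearing on its right-hand side. Concretely, I would solve the strong momentum equation \eqref{fluidNSLK_augmented_strong} for
\[ \partial_t W = -\nabla W\, U - \frac{\lambda'}{R}\nabla R - \mu W + \frac{\hbar_\nu}{2R}\Div(R\nabla\overline{V}) + \frac{\nu}{2R}\Div(R\nabla W), \]
solve equation \eqref{fluid2NSLK_augmented_strong} for $\partial_t\overline{V} = -\nabla\overline{V}\,U - \frac{\hbar_\nu}{2R}\Div(R\nabla U^\top)$, and use the mass equation \eqref{continuityNSLK_augmented_strong} together with $H'(R)=\log R$ to get $\partial_t(H'(R)) = \partial_t R/R = -\Div(RU)/R = -(\Div U + U\cdot\nabla H'(R))$, and then insert these three identities into Proposition~\ref{first_ineq}.

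Once these are substituted, three of the blocks fall out almost immediately. The convective contributions combine: $-\nabla W\,U$ merges with the existing $\nabla W\,u$ to give $\nabla W(u-U)\cdot(W-w)$, and likewise for $\overline{V}$, producing the first two terms of the statement. The friction term $-\mu W$ coming from $\partial_t W$ multiplies $\rho(W-w)$ and cancels identically against the explicit contribution $\mu\int_0^t\!\int_{\T^d}(\rho|W|^2-\rho w\cdot W)$, so that no $\mu$ term survives. The divergence sources are reorganized by writing $\nabla U^\top = \nabla W^\top - \frac{\nu}{\hbar_\nu}\nabla\overline{V}$, which is legitimate because $U = W-\frac{\nu}{\hbar_\nu}\overline{V}$ and $\nabla\overline{V}$ is symmetric ($\overline{V}$ being a gradient); the four resulting pieces reassemble precisely into the $\frac{\nu}{2}$ and $\frac{\hbar_\nu}{2}$ lines. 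The term $-2\nu\lambda'\int_0^t\!\int_{\T^d}|\nabla\sqrt{\rho}|^2$ and the remainder $I_1$ are untouched by the substitution and carry over verbatim.

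The genuinely delicate block is the isothermal pressure/entropy one, which is where the factor $\frac{\nu}{\hbar_\nu}$ is manufactured. I would gather the pressure contribution $-\lambda'\frac{\rho}{R}\nabla R\cdot(W-w)$ from $\partial_t W$ together with the three explicit $\lambda'$-terms of Proposition~\ref{first_ineq}, rewrite all occurrences of $R$ via $\nabla R/R = \nabla H'(R)$, and use the kinematic identities $w-u=\frac{\nu}{\hbar_\nu}\overline{v}$ and $W-U=\frac{\nu}{\hbar_\nu}\overline{V}$ to split the combination $u+W-w$ as $U+\frac{\nu}{\hbar_\nu}(\overline{V}-\overline{v})$. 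Carrying the overall minus sign, the $\frac{\nu}{\hbar_\nu}(\overline{V}-\overline{v})$ part immediately yields $\lambda'\frac{\nu}{\hbar_\nu}\int_0^t\!\int_{\T^d}\rho\nabla H'(R)\cdot(\overline{v}-\overline{V})$. After inserting $\partial_t(H'(R))=-(\Div U+U\cdot\nabla H'(R))$, the terms carrying $U\cdot\nabla H'(R)$ telescope (the $\rho$-parts cancel) into $-\int_0^t\!\int_{\T^d} U\cdot\nabla R$, and the cancellation $\int_{\T^d}\Div(RU)=0$ on the torus removes all remaining $R$-dependence, leaving exactly $-\lambda'\frac{\nu}{\hbar_\nu}\int_0^t\!\int_{\T^d}\rho\,\Div\overline{V}$ since $\Div(U-W)=-\frac{\nu}{\hbar_\nu}\Div\overline{V}$.

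I expect this last block to be the main obstacle. It is the only place where several integrations by parts, the strong continuity equation, and the special compatibility between the isothermal pressure and the Langevin shift noted in Remark~\ref{remark_lambda_pos} (namely that $\lambda\nabla\rho+\mu\rho u$ reorganizes around $\lambda'$ and $\overline{v}$) must all be combined, and the bookkeeping of signs through these manipulations is where errors are most likely to creep in; every other block reduces to an essentially mechanical matching of terms.
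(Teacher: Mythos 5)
Your proposal is correct and follows essentially the same route as the paper's own proof: you substitute the strong equations \eqref{NSLK_augmented_strong} into Proposition \ref{first_ineq} (equivalently, the paper tests them against $\frac{\rho}{R}(W-w)$ and $\frac{\rho}{R}(\overline{V}-\overline{v})$), note the exact cancellation of the $\mu$-terms, split $\nabla U^\top = \nabla W^\top - \frac{\nu}{\hbar_\nu}\nabla \overline{V}$ using the symmetry of $\nabla \overline{V}$, and reduce the $\lambda'$-block via $\partial_t H'(R) = -(\Div U + U\cdot \nabla H'(R))$, $R\nabla H'(R)=\nabla R$, $\int_{\T^d}\Div(RU)=0$, and the kinematic identities $w-u=\frac{\nu}{\hbar_\nu}\overline{v}$, $W-U=\frac{\nu}{\hbar_\nu}\overline{V}$, exactly as the paper does when it collects these terms into $I_2$ and shows $\lambda' I_2 = \lambda'\frac{\nu}{\hbar_\nu}\int_0^t\int_{\T^d}\left( \rho \nabla (H'(R)) \cdot ( \overline{v} - \overline{V}) - \rho \Div \overline{V} \right)$. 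The only differences are cosmetic bookkeeping (the paper inserts the zero combination $\int_{\T^d}(H'(R)\partial_t R + R\Div U)$ where you telescope directly, and your version correctly carries the $\frac{\hbar_\nu}{2}$ coefficient on $\Div(R\nabla U^\top)$ that is evidently intended in \eqref{fluid2NSLK_augmented_strong}).
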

\begin{proof}
Multiplying equation \eqref{continuityNSLK_augmented_strong} by $\frac{\rho}{R} ( W-w)$ and equation \eqref{fluid2NSLK_augmented_strong} by $\frac{\rho}{R} ( \overline{V}-\overline{v})$, and integrating with respect to time and space, we can replace the terms $\rho ( \partial_t W \cdot (W - w))$ and $\rho ( \partial_t \overline{V} \cdot (\overline{V} - \overline{v}))$ in the inequality of Proposition \ref{first_ineq}, which gives:
\begin{align*}  
\mathcal{E}_{NSLK}(t) - \mathcal{E}_{NSLK}(0) & \leq \frac{\nu}{2} \int_0^t \int_{\T^d} \frac{\rho}{R} \Div ( R \nabla \overline{V}) \cdot (\overline{V}-\overline{v}) - \frac{\hbar_{\nu}}{2} \int_0^t \int_{\T^d} \frac{\rho}{R} \Div ( R \nabla W^\top) \cdot (\overline{V}-\overline{v}) \\
& + \int_0^t \int_{\T^d} \rho (\nabla \overline{V} (u-U)) \cdot(\overline{V} - \overline{v}) +  \int_0^t \int_{\T^d} \rho  (\nabla W (u-U) ) \cdot(W - w) \\
&  + \frac{\nu}{2}  \int_0^t \int_{\T^d} \frac{\rho}{R} \Div (R \nabla W ) \cdot ( W-w) + \frac{\hbar_{\nu}}{2} \int_0^t \int_{\T^d}  \frac{\rho}{R}   \Div ( R \nabla \overline{V} ) \cdot (W-w) \\
& - 2 \nu \lambda' \int_0^t \int_{\T^d} |\nabla \sqrt{\rho}|^2 + I_1 + \lambda' I_2,
\end{align*}
where
\begin{align*} 
   I_2= - \int_0^t \int_{\T^d} \frac{\rho}{R} \nabla R \cdot ( W-w) - \int_0^t \int_{\T^d}  (\partial_t(H'(R))(\rho - R) +\rho \nabla (H'(R)) \cdot u) \\
   + \int_0^t \int_{\T^d}  H'(R) \partial_t R +\int_0^t \int_{\T^d} ( R \Div U - \rho \Div W).
\end{align*} 
Note that the dissipation term in $\mu$ has disappeared from the expression. In fact, by multiplying by $\frac{\rho}{R} ( W-w)$, the contribution of \eqref{continuityNSLK_augmented_strong} in the previous inequality is
\[ - \mu \int_0^t \int_{\T^d} \frac{\rho}{R} ( W-w) \cdot R W = -\mu \int_0^t \int_{\T^d} \left( \rho |W|^2 -\rho w \cdot W \right), \]
which is exactly the opposite of the one in the expression of Proposition \ref{first_ineq}. Using equation \eqref{continuityNSLK_augmented_strong} $H'(R) \partial_t R + H'(R) \Div(rU)=0$, and as $R \nabla (H'(R))= \nabla R$, we get by integration by parts that
\[ 0 = \int_0^t \int_{\T^d} (H'(R) \partial_t R -R \nabla( H'(R)) \cdot U) = \int_0^t \int_{\T^d}  (H'(R) \partial_t R  + R \Div U).   \]
Observing that
\[ \partial_t(H'(R)) =  - \Div U - H''(R) \nabla R \cdot U = - \Div U - \nabla(H'(R)) \cdot U,  \]
we get by integration by parts that
\begin{align*}
    I_2 = \int_0^t \int_{\T^d} \rho \nabla ( H'(R)) \cdot ( - W+w+U-u) + \int_0^t \int_{\T^d} \left( \Div U (\rho - R) - R \nabla ( H'(R))\cdot U -\rho \Div W \right) \\
    = \lambda' \frac{\nu}{ \hbar_{\nu}} \int_0^t \int_{\T^d}  ( \rho \nabla (H'(R)) \cdot ( \overline{v} - \overline{V}) - \rho \Div \overline{V} ) ,
\end{align*}
recalling that $U=W - \frac{\nu}{2} V$ and $\overline{V} = \frac{\hbar_{\nu}}{2} V$, which gives the result.
\end{proof}

We can now state the main theorem of this section:
\begin{theorem} \label{gronwall_strong}
Let $(R, W,\overline{V})$ be a strong solution of \eqref{NSLK_augmented} and assume $\lambda' >0$, then any weak solution $(\rho, w, \overline{v})$ of the augmented system \eqref{NSLK_augmented} satisfies:
\begin{equation} \label{gronwall_ineq}
    \mathcal{E}_{NSLK}(t) - \mathcal{E}_{NSLK}(0) \leq C \left(1+ \frac{\nu}{\hbar_{\nu}} \right) \int_0^t \mathcal{E}_{NSLK}(s) ds,
\end{equation}
where $C=C(R,W,\overline{V})$ denotes a constant independent of $\lambda$, $\mu$, $\nu$ and $\hbar$.
\end{theorem}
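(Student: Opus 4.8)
The plan is to take the inequality of Proposition \ref{secund_ineq} as the starting point and to estimate every term on its right-hand side by $C(1+\nu/\hbar_\nu)\int_0^t\mathcal{E}_{NSLK}(s)\,ds$, after which \eqref{gronwall_ineq} is immediate. Three ingredients underlie all the estimates. Since $(R,W,\overline{V})$ is a strong solution in the sense of Definition \ref{strong_sol_def_NSLK}, the fields $R$, $1/R$, $\nabla R$, $\nabla\log R$, $\nabla W$, $\nabla\overline{V}$ and the expanded divergences $\Div(R\nabla W)=\nabla R\cdot\nabla W+R\Delta W$, $\Div(R\nabla\overline{V})$ are all bounded in $L^\infty$, with norms depending only on $R,W,\overline{V}$. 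Next, by Remark \ref{Csizar-Kullback} and the hypothesis $\lambda'>0$, the relative entropy is nonnegative and each quadratic piece is dominated by it, so that $\int_{\T^d}\rho|w-W|^2$, $\int_{\T^d}\rho|\overline{v}-\overline{V}|^2$ and $\lambda'\int_{\T^d}H(\rho|R)$ are all $\le\mathcal{E}_{NSLK}(s)$; moreover the time-integrated quantum dissipation $\frac{\nu}{2}\int_0^t\!\int_{\T^d}\rho(|\T(\overline{v})/\sqrt{\rho}-\nabla\overline{V}|^2+|\T(w)/\sqrt{\rho}-\nabla W|^2)$ stands on the left of Proposition \ref{secund_ineq} and can be used to absorb terms. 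Finally I would exploit throughout the decomposition $u-U=(w-W)-\frac{\nu}{\hbar_\nu}(\overline{v}-\overline{V})$, coming from $u=w-\frac{\nu}{2}\nabla\log\rho$, $U=W-\frac{\nu}{2}\nabla\log R$ and $\overline{v}-\overline{V}=\frac{\hbar_\nu}{2}\nabla\log(\rho/R)$; this is what generates the factor $\nu/\hbar_\nu$.

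With these, the two convective terms $\int_0^t\!\int_{\T^d}\rho(\nabla\overline{V}(u-U))\cdot(\overline{V}-\overline{v})$ and $\int_0^t\!\int_{\T^d}\rho(\nabla W(u-U))\cdot(W-w)$ are dispatched by substituting the decomposition of $u-U$, using the $L^\infty$ bounds on $\nabla\overline{V},\nabla W$, and applying Young's inequality: every product collapses to $\int_{\T^d}\rho|w-W|^2$ or $\int_{\T^d}\rho|\overline{v}-\overline{V}|^2$, hence to $\mathcal{E}_{NSLK}(s)$, the contributions built from the second half of $u-U$ carrying the prefactor $\nu/\hbar_\nu$. The $\nu$-weighted divergence block $\frac{\nu}{2}\int_0^t\!\int\frac{\rho}{R}(\Div(R\nabla\overline{V})\cdot(\overline{V}-\overline{v})+\Div(R\nabla W)\cdot(W-w))$ is handled identically: dividing the bounded divergences by $R$ leaves bounded coefficients, and Cauchy–Schwarz followed by Young against $(\overline{V}-\overline{v})$ and $(W-w)$ returns $\mathcal{E}_{NSLK}(s)$.

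The delicate part, which I expect to be the main obstacle, is the quantum functional $I_1$ together with the $\hbar_\nu$-weighted cross-block $\frac{\hbar_\nu}{2}\int_0^t\!\int\frac{\rho}{R}(\Div(R\nabla\overline{V})\cdot(W-w)-\Div(R\nabla W^\top)\cdot(\overline{V}-\overline{v}))$, because $\T(\overline{v})$ and $\T(w)$ are not individually controlled and because naive bounds would leave constants depending on $\hbar$. Two mechanisms are needed. First, for the $\nu$-parts of $I_1$ I would complete the square: writing $\sqrt{\rho}\,\T(\overline{v})=\rho(\T(\overline{v})/\sqrt{\rho}-\nabla\overline{V})+\rho\nabla\overline{V}$ (and likewise for $\T(w)$), the reference parts $\rho|\nabla\overline{V}|^2$, $\rho|\nabla W|^2$ cancel the explicit $\frac{\nu}{2}\int\rho(|\nabla\overline{V}|^2+|\nabla W|^2)$ in $I_1$, while the fluctuation parts are split by Young with a small parameter, the small piece being absorbed into the left-hand dissipation and the remainder controlled by the bounded $\nabla\overline{V},\nabla W$. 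Second, and this is the heart of the matter, the $\hbar_\nu$-terms of $I_1$, namely $\frac{\hbar_\nu}{2}\int\sqrt{\rho}(\T(\overline{v}):\nabla W-\T(w)^\top:\nabla\overline{V})$, must be paired with the $\hbar_\nu$-cross-block above and shown to cancel at leading order through the skew-symmetric coupling of $w$ and $\overline{v}$ built into the augmented equation \eqref{fluid2NSLK_augmented}; this is precisely the structure of \cite{Bresch2019} that we are mimicking, and it is what removes any surviving power of $\hbar_\nu$ and keeps the constant independent of $\hbar$.

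Finally the isothermal block $\lambda'\frac{\nu}{\hbar_\nu}\int_0^t\!\int(\rho\nabla(H'(R))\cdot(\overline{v}-\overline{V})-\rho\Div\overline{V})-2\nu\lambda'\int_0^t\!\int|\nabla\sqrt{\rho}|^2$ should be bounded by $C\frac{\nu}{\hbar_\nu}\int_0^t\mathcal{E}_{NSLK}$. Using $\nabla(H'(R))=\nabla\log R=\frac{2}{\hbar_\nu}\overline{V}$, $\rho\,\overline{v}=\frac{\hbar_\nu}{2}\nabla\rho$ and $\overline{v}-\overline{V}=\frac{\hbar_\nu}{2}\nabla\log(\rho/R)$, I would rewrite $\rho\nabla(H'(R))\cdot(\overline{v}-\overline{V})$ as a pairing of the bounded field $\nabla\log R$ with the Fisher-type vector $\rho\nabla\log(\rho/R)$, estimate it by Cauchy–Schwarz and Young against $\int_{\T^d}\rho|\overline{v}-\overline{V}|^2\le\mathcal{E}_{NSLK}$, and simply discard the favourably signed term $-2\nu\lambda'\int|\nabla\sqrt{\rho}|^2$; here the positivity supplied by Csiszár–Kullback, which is where $\lambda'>0$ enters, is essential. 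Collecting the four blocks, the $\nu$-free terms sum to $C\int_0^t\mathcal{E}_{NSLK}$ and the $\nu$- and $\nu/\hbar_\nu$-weighted ones to $C\frac{\nu}{\hbar_\nu}\int_0^t\mathcal{E}_{NSLK}$, which is \eqref{gronwall_ineq}; one verifies en route that each constant depends only on the $L^\infty$ and Lipschitz norms of $R,W,\overline{V}$ and hence is independent of $\lambda,\mu,\nu,\hbar$.
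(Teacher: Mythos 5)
Your overall strategy (start from Proposition \ref{secund_ineq}, use $u-U=(w-W)-\frac{\nu}{\hbar_\nu}(\overline{v}-\overline{V})$ and the $L^\infty$ bounds on the strong solution) is the paper's strategy, and your treatment of the two convective terms is exactly what the paper does. But there is a genuine gap in how you handle the $\nu$-weighted divergence block, the $\nu$-part of $I_1$, and the isothermal block: in all three you invoke Cauchy--Schwarz/Young against expressions that are only \emph{linear} in the differences, and this leaves behind remainders that are not controlled by the relative entropy. Concretely, bounding $\frac{\nu}{2}\int\frac{\rho}{R}\Div(R\nabla\overline{V})\cdot(\overline{V}-\overline{v})$ by Young produces $C\nu\int_{\T^d}\rho$; your complete-the-square treatment of $I_1$ leaves, after absorbing the fluctuation into the left-hand dissipation, a term $C\nu\int_0^t\int_{\T^d}\rho|\nabla\overline{V}|^2\lesssim \nu\,t\,\|\rho_0\|_{L^1}$; and in the isothermal block the piece $-\lambda'\frac{\nu}{\hbar_\nu}\int\rho\Div\overline{V}$ is again of mass type, while Young on $\rho\nabla(H'(R))\cdot(\overline{v}-\overline{V})$ carries the prefactor $\lambda'$ into the constant, contradicting the claimed independence of $C$ from $\lambda$. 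None of these mass-type remainders can be dominated by $\int_0^t\mathcal{E}_{NSLK}$: the relative entropy vanishes when the weak and strong solutions coincide, whereas $\int\rho$ does not. Your argument therefore only yields $\mathcal{E}_{NSLK}(t)\leq\mathcal{E}_{NSLK}(0)+C\nu t+C(1+\nu/\hbar_\nu)\int_0^t\mathcal{E}_{NSLK}$, which is strictly weaker than \eqref{gronwall_ineq}.

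The paper closes these blocks not by absorption but by \emph{exact} integration-by-parts identities, and never uses the left-hand dissipation at all. For the isothermal block, using $\nabla(H'(R))=\frac{2}{\hbar_\nu}\overline{V}$, $\nabla\rho=\frac{2}{\hbar_\nu}\rho\overline{v}$ and $-2\nu\lambda'\int|\nabla\sqrt{\rho}|^2=-\frac{\nu\lambda'}{2}\int\rho|\nabla\log\rho|^2$, the three terms recombine algebraically into $-\lambda'\frac{\nu}{\hbar_\nu}\int_0^t\int_{\T^d}\rho|\overline{V}-\overline{v}|^2\leq 0$, which is discarded wholesale — this is precisely where $\lambda'>0$ enters, with no constant generated. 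For the quantum part, the divergence blocks are combined with \emph{all} of $I_1$ (not just its reference pieces): integrating by parts with $\sqrt{\rho}\,\T(\overline{v})=\rho\nabla\overline{v}$ and $\nabla(\rho/R)=\frac{\rho}{R}(v-V)$, every term that is linear in a difference or independent of the differences cancels identically, leaving
\begin{equation*}
\frac{\nu}{2}\int_0^t\int_{\T^d}\rho\,(V-v)\cdot\bigl[\nabla\overline{V}(\overline{V}-\overline{v})+\nabla W^\top(W-w)\bigr]
+\frac{\hbar_\nu}{2}\int_0^t\int_{\T^d}\rho\,(V-v)\cdot\bigl[\nabla\overline{V}(W-w)-\nabla W(\overline{V}-\overline{v})\bigr],
\end{equation*}
which is exactly quadratic in the differences; since $\frac{\hbar_\nu}{2}(V-v)=\overline{V}-\overline{v}$, the $\hbar_\nu$ prefactor cancels and the $\nu$-block carries the factor $\nu/\hbar_\nu$, after which the Cauchy--Schwarz step you used for the convective terms applies. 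So the cancellation you correctly flagged as ``the heart of the matter'' but deferred to the structure of \cite{Bresch2019} is not optional fine-tuning: it is the only mechanism that eliminates the mass-type remainders your Young inequalities create, and it must be carried out for the $\nu$-terms and the isothermal terms as well, not only for the $\hbar_\nu$-cross-block.
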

\begin{proof}
The idea is to bound every term on the right hand side of the inequality of Proposition \ref{secund_ineq} by a multiple of $\int_0^t \mathcal{E}_{NSLK}$. Recalling that $ W, \overline{V} \in  L^{\infty}(\left[ 0,T \right[;W^{2,\infty}(\T^d)) \cap W^{1,\infty}(\left[ 0,T \right[;L^{\infty}(\T^d))$, we immediately get by Cauchy-Schwarz inequality that
\begin{align*}
    \int_0^t \int_{\T^d} \rho (\nabla \overline{V} (u-U)) \cdot(\overline{V} - \overline{v}) +  \int_0^t \int_{\T^d} \rho  (\nabla W (u-U) ) \cdot(W - w)  \\
   \leq C   \int_0^t \int_{\T^d} \rho (|u-U|^2+ |\overline{v}-\overline{V}|^2+  |w-W|^2).
\end{align*}

For the second term of the inequality of Proposition \ref{secund_ineq}, as $\nabla (H'(R)) = \nabla \log R = V$, we get by integration by parts that
\begin{multline*}
    \lambda' \frac{\nu}{ \hbar_{\nu}} \int_0^t \int_{\T^d}  ( \rho \nabla (H'(R)) \cdot ( \overline{v} - \overline{V}) - \rho \Div \overline{V} ) - \frac{\nu}{2} \lambda' \int_0^t \int_{\T^d} \frac{1}{\rho} |\nabla \rho|^2 \\
    = - \lambda'\frac{\nu}{\hbar_{\nu}} \int_0^t \int_{\T^d} \left( \rho \overline{V} \cdot (\overline{V} -  \overline{v} ) - \lambda'\rho \overline{V} \cdot \overline{v} \right) - \frac{\nu}{\hbar_{\nu}} \int_0^t \int_{\T^d} \rho |\overline{v}|^2 = - \lambda' \frac{\nu}{\hbar_{\nu}} \int_0^t \int_{\T^d} \rho |\overline{V}-\overline{v}|^2.    
\end{multline*}

For the third and fourth lines of the inequality of Proposition \ref{secund_ineq}, we can also show in the same way that
\begin{multline*}
    \frac{\nu}{2}  \int_0^t \int_{\T^d}  \frac{\rho}{R} (  \Div ( R \nabla \overline{V} ) \cdot ( \overline{V} - \overline{v}) + \Div (R \nabla W ) \cdot ( W-w))\\
 +  \frac{\hbar_{\nu}}{2} \int_0^t \int_{\T^d}  \frac{\rho}{R} (  \Div ( R \nabla \overline{V} ) \cdot (W-w) - \Div (R \nabla W^\top ) \cdot ( \overline{V} - \overline{v})) + I_1 \\
 = \frac{\nu}{2} \int_0^t \int_{\T^d} \rho (V-v) \cdot \left[ \nabla \overline{V} ( \overline{V} -\overline{v}) + \nabla W^\top (W-w)    \right] \\
 + \frac{\hbar_{\nu}}{2} \int_0^t \int_{\T^d} \rho (V-v) \cdot \left[ \nabla \overline{V}(W-w)  - \nabla W^  (\overline{V} -\overline{v})    \right],
\end{multline*}
hence we get the result by bounding these integrals like the first term.
\end{proof}

\subsection{The viscous limit $\nu \rightarrow 0$}
\begin{theorem}
Let $T>0$, $\lambda'>0$ and $(\rho_0,u_0,\overline{v}_0=\frac{\hbar}{2} \nabla \log \rho_0)$ such that $\mathcal{E}_{ELK}(\rho_0,u_0,\overline{v}_0) < \infty$. Let $(\rho^{\nu},u^{\nu},\overline{v}^{\nu}= \frac{\hbar_{\nu}}{2} \nabla \log \rho^{\nu})$ be a weak solution to the augmented NSLK system \eqref{NSLK_augmented} in $\left[0 , T \right[ \times \T^d$ with initial data $(\rho_0,  u_0,  \overline{v}_0^{\nu} )$. Let $(\rho,u,\overline{v})$ be the weak limit of $(\rho^{\nu},u^{\nu},\overline{v}^{\nu})$ when $\nu$ tends to 0 in the sense
\[ \rho^{\nu} \rightharpoonup \rho \ \text{weakly in} \  L^{\infty}(\left[ 0,T \right[; L^1(\T^d)), \]
\[ \sqrt{\rho^{\nu}} w^{\nu} \rightharpoonup \sqrt{\rho} u \ \text{weakly in} \  L^{\infty}(\left[ 0,T \right[; L^2(\T^d)), \]
\[ \sqrt{\rho^{\nu}} \overline{v}^{\nu} \rightharpoonup \sqrt{\rho} \overline{v} \ \text{weakly in} \  L^{\infty}(\left[ 0,T \right[; L^2(\T^d)), \]
with $\rho \overline{v} = \frac{\hbar}{2} \nabla \rho$. Then $(\rho,u,\overline{v})$ is a dissipative solution of the augmented ELK system \eqref{ELK_augmented} in $\left[0 , T \right[ \times \T^d$ with initial data $(\rho_0, u_0, \overline{v}_0)$.
\end{theorem}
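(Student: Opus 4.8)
The plan is to relax the notion of strong test trajectory used in Theorem~\ref{gronwall_strong} so that the error term $\mathscr{E}$ appears, and then to pass to the weak limit $\nu\to0$ in the resulting Gronwall inequality. Fix a smooth field $U$ and let $(R,\overline{V},\mathscr{E}(R,U))$ be defined by \eqref{ELK_augmented_def}. For each $\nu>0$ set
\begin{equation*}
W_\nu := U + \tfrac{\nu}{2}\nabla\log R, \qquad \overline{V}_\nu := \tfrac{\hbar_\nu}{2}\nabla\log R .
\end{equation*}
Since $R$ is transported by $U$, it solves the continuity equation \eqref{continuityNSLK_augmented_strong}, and differentiating this equation in space exactly as in Remark~\ref{weak_to_augmented_remark} shows that $\overline{V}_\nu$ solves \eqref{fluid2NSLK_augmented_strong} with no error. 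Only the momentum equation \eqref{fluidNSLK_augmented_strong} fails, with residual
\begin{equation*}
\mathscr{E}_\nu := R(\partial_t W_\nu + \nabla W_\nu\, U) + \lambda'\nabla R + \mu R W_\nu - \tfrac{\hbar_\nu}{2}\Div(R\nabla\overline{V}_\nu) - \tfrac{\nu}{2}\Div(R\nabla W_\nu).
\end{equation*}
As $\nu\to0$ one has $\lambda'\to\lambda$, $W_\nu\to U$, $\overline{V}_\nu\to\overline{V}$ and $\tfrac{\nu}{2}\Div(R\nabla W_\nu)\to0$, all uniformly on $[0,T[\times\T^d$ since every field is smooth; hence $\mathscr{E}_\nu\to\mathscr{E}(R,U)$ uniformly.

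With this relaxed trajectory I would repeat the computations of Propositions~\ref{first_ineq} and~\ref{secund_ineq} and of Theorem~\ref{gronwall_strong} line by line. The sole modification is that, since $\overline{V}_\nu$ still solves \eqref{fluid2NSLK_augmented_strong} exactly while the momentum equation now carries the residual $\mathscr{E}_\nu$, substituting $\rho^\nu\,\partial_t W_\nu\cdot(W_\nu-w^\nu)$ through \eqref{fluidNSLK_augmented_strong} leaves exactly one extra contribution. Writing $\mathcal{E}_{NSLK}^\nu(t):=\mathcal{E}_{NSLK}(\rho^\nu,w^\nu,\overline{v}^\nu\,|\,R,W_\nu,\overline{V}_\nu)(t)$, this yields
\begin{equation*}
\mathcal{E}_{NSLK}^\nu(t) - \mathcal{E}_{NSLK}^\nu(0) \leq C\Bigl(1+\tfrac{\nu}{\hbar_\nu}\Bigr)\int_0^t \mathcal{E}_{NSLK}^\nu(s)\,ds + \int_0^t\!\!\int_{\T^d} \frac{\rho^\nu}{R}\,\mathscr{E}_\nu\cdot(W_\nu-w^\nu),
\end{equation*}
with $C=C(R,U,\overline{V})$ bounded uniformly in $\nu$. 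Setting $C_\nu:=C(1+\nu/\hbar_\nu)$ and $b_\nu(t):=\int_0^t\!\int_{\T^d}\tfrac{\rho^\nu}{R}\,|\mathscr{E}_\nu\cdot(W_\nu-w^\nu)|$ and applying the integral form of Gronwall's lemma gives
\begin{equation*}
\mathcal{E}_{NSLK}^\nu(t) \leq \mathcal{E}_{NSLK}^\nu(0)\,e^{C_\nu t} + b_\nu(t) + C_\nu\int_0^t b_\nu(s)\,e^{C_\nu(t-s)}\,ds,
\end{equation*}
which already has the exact shape of the dissipative inequality of Definition~\ref{dissipative_solution}.

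It then remains to let $\nu\to0$. On the right-hand side, $\hbar_\nu\to\hbar>0$ gives $C_\nu\to C$; the initial data converge, since $\overline{v}_0^\nu=\tfrac{\hbar_\nu}{2}\nabla\log\rho_0\to\overline{v}_0$, $W_\nu(0)\to U(0)$, $\overline{V}_\nu(0)\to\overline{V}(0)$ and $\lambda'\to\lambda$, so that $\mathcal{E}_{NSLK}^\nu(0)\to\mathcal{E}_{ELK}(\rho,u,\overline{v}\,|\,R,U,\overline{V})(0)$; and $b_\nu(t)\to b_{ELK}(t)$, because $\mathscr{E}_\nu/R\to\mathscr{E}/R$ and $W_\nu\to U$ uniformly while the momentum $\rho^\nu w^\nu\to\rho u$ strongly in $L^1$ (obtained by an Aubin--Lions argument on $\rho^\nu w^\nu$ exactly as in the proof of Theorem~\ref{prop_NSLK}), so that $\tfrac{\rho^\nu}{R}\mathscr{E}_\nu\cdot(W_\nu-w^\nu)\to\tfrac{\rho}{R}\mathscr{E}\cdot(U-u)$ in $L^1$ and the absolute value passes to the limit. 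On the left-hand side I would discard the nonnegative viscous dissipation integrals of $\mathcal{E}_{NSLK}^\nu$ (which only decreases it) and invoke weak lower semicontinuity of the remaining terms: $\lambda'\int_{\T^d}H(\rho^\nu|R)=\lambda'\int_{\T^d}\rho^\nu\log(\rho^\nu/R)$ is convex in $\rho^\nu$, while $\tfrac12\int_{\T^d}\rho^\nu|w^\nu-W_\nu|^2+\tfrac12\int_{\T^d}\rho^\nu|\overline{v}^\nu-\overline{V}_\nu|^2$ and $\mu\int_0^t\!\int_{\T^d}\rho^\nu|w^\nu-W_\nu|^2$ are lower semicontinuous for the weak convergences of $\sqrt{\rho^\nu}w^\nu$ and $\sqrt{\rho^\nu}\overline{v}^\nu$, once $\sqrt{\rho^\nu}\to\sqrt{\rho}$ strongly in $L^2$ (from the uniform $L^\infty_t H^1_x$ bound on $\sqrt{\rho^\nu}$ and Aubin--Lions). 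This gives $\mathcal{E}_{ELK}(\rho,u,\overline{v}\,|\,R,U,\overline{V})(t)\leq\liminf_\nu\mathcal{E}_{NSLK}^\nu(t)$, and taking $\liminf$ on both sides of the inequality above produces precisely the estimate of Definition~\ref{dissipative_solution}.

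The main obstacle is this last limit. The structural part (relaxing the trajectory, isolating the single residual term, and the Gronwall manipulation) is routine given Propositions~\ref{first_ineq}--\ref{secund_ineq} and Theorem~\ref{gronwall_strong}; the real work is analytic, namely upgrading the weak convergences of the hypotheses to strong convergence of $\sqrt{\rho^\nu}$ in $L^2$ and of the momentum $\rho^\nu w^\nu$ in $L^1$, on which both the weak lower semicontinuity of $\mathcal{E}_{NSLK}$ and the convergence $b_\nu\to b_{ELK}$ rest. These compactness facts follow from the uniform energy \eqref{eq_energy_NSLK} and BD-entropy \eqref{BD_entropy_NSLK} bounds, but the treatment of the absolute value inside $b_{ELK}$ is the delicate point, and it is exactly what forces one to establish strong rather than merely weak convergence of the momentum.
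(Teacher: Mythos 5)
Your proposal follows essentially the same route as the paper's proof: you relax the strong trajectory to $(R,\,W_\nu=U+\tfrac{\nu}{2}\nabla\log R,\,\overline{V}_\nu=\tfrac{\hbar_\nu}{2}\nabla\log R)$, observe that only the momentum equation carries a residual $\mathscr{E}_\nu$ (which coincides with the paper's $\mathscr{E}^{\nu}(R,U)=\mathscr{E}(R,U)-\nu\,\Div(R\,\mathbb{D}U)$), rerun the relative-entropy estimate of Theorem \ref{gronwall_strong} with the extra term $b_\nu$, apply the integral Gronwall lemma, and pass to the limit $\nu\to 0$ via lower semicontinuity on the left and convergence of $b_\nu$ and of the initial relative entropy on the right. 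Your added detail on the compactness underlying $b_\nu\to b_{ELK}$ and the lower semicontinuity (strong $L^2$ convergence of $\sqrt{\rho^\nu}$ and strong $L^1$ convergence of the momentum via Aubin--Lions, convexity of $\rho\log(\rho/R)$) merely makes explicit what the paper asserts without proof, so the argument is the same.
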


\begin{proof}
Let $U$ be a smooth enough function such that, defining $(R,\overline{V}, \mathscr{E})$ through \eqref{ELK_augmented_def} and
\[  \overline{V}^{\nu}= \frac{\hbar}{2}\nabla \log R \ \ \ \text{and} \ \ \ W^{\nu}=U+\frac{\nu}{2} V,  \]
we have the global regularity (i) of Definition \ref{strong_sol_def_NSLK} on $(R, W^{\nu},\overline{V}^{\nu})$ and the compatibility condition $\int R_0 = \int \rho_0$. We also define
\[ \mathscr{E}^{\nu}(R,U)=R(\partial_t U + U \cdot \nabla U) + \lambda \nabla R +\mu \rho U- 2 \frac{\nu}{2} \Div(R \mathbb{D} U) - \frac{\hbar}{2}( \Div(R \nabla \overline{V})),  \]
such that
\[ \mathscr{E}^{\nu}(R,U) =  \mathscr{E}(R,U)  -  \nu \Div(R \mathbb{D} U). \]
Using equation \eqref{continuityELK_augmented_def}, we can easily check that $(R, W^{\nu},\overline{V}^{\nu})$ satisfies \eqref{fluid2NSLK_augmented_strong} and that
\[ \mathscr{E}^{\nu}(R,U) = R(\partial_t W + U \cdot \nabla W) + \lambda' \nabla R + \mu R W - \frac{\nu}{2} \Div(R \nabla W) - \frac{\hbar_{\nu}}{2} \Div(R \nabla \overline{V}).\]
As $\mathscr{E}^{\nu}(R,U)$ is not necessarily equal to 0, $(R, W^{\nu},\overline{V}^{\nu})$ fails to be a strong solution of the augmented system \eqref{NSLK_augmented} in the sense of Definition \ref{strong_sol_def_NSLK}, however we can show in the exact same way as for the proof of Theorem \ref{gronwall_strong} that
\begin{equation*} \label{gronwall_ineq_approx}
    \mathcal{E}_{NSLK}(t) - \mathcal{E}_{NSLK}(0) \leq C^{\nu} \int_0^t \mathcal{E}_{NSLK}(s) ds + b_{NSLK}^{\nu}(t),
\end{equation*}
where 
\[ b_{NSLK}^{\nu}(t) = \int_0^t \int_{\T^d} \frac{\rho^{\nu}}{R} ( \mathscr{E}^{\nu}(R,U) \cdot (W^{\nu}-w^{\nu})) \ \ \ \text{and} \ \ \ C^{\nu}=C \left(1+ \frac{\nu}{\hbar_{\nu}} \right) . \]
By Gronwall lemma, we get that
\begin{equation} \label{gronwall_end}
    \mathcal{E}_{NSLK}(t) \leq \mathcal{E}_{NSLK}(0) e^{C^{\nu}t} + C^{\nu} \int_0^t b_{NSLK}^{\nu}(s) e^{C^{\nu} (t-s)} ds +b_{NSLK}^{\nu}(t).
\end{equation}
By definition, we have
\[ \frac{1}{2} \int_{\T^d} \rho (|\overline{v}-\overline{V}|^2  +|w-W|^2 )+ \lambda' \int_{\T^d} H(\rho|R) + \mu \int_0^t \int_{\T^d}  \rho |w-W|^2  \leq \mathcal{E}_{NSLK}(\rho^{\nu}, w^{\nu},\overline{v}^{\nu} | R,W^{\nu},\overline{V}^{\nu})(t), \]
so \eqref{gronwall_end} gives
\begin{multline*}
\frac{1}{2} \int_{\T^d} \rho^{\nu} (|\overline{v}^{\nu}-\overline{V}^{\nu}|^2  +|w^{\nu}-W^{\nu}|^2 )+ \lambda' \int_{\T^d} H(\rho^{\nu}|R) + \mu \int_0^t \int_{\T^d}  \rho |w-W|^2 \\
\leq \mathcal{E}_{NSLK}(\rho^{\nu}, w^{\nu},\overline{v}^{\nu} | R,W^{\nu},\overline{V}^{\nu})(0) e^{C^{\nu}t} + C^{\nu} \int_0^t b_{NSLK}^{\nu}(s) e^{C^{\nu} (t-s)} ds +b_{NSLK}^{\nu}(t).   
\end{multline*}
We are now going to pass to the limit $\nu \rightarrow 0$ in the previous inequality. By the lower semi-continuity of the term $\mathcal{E}_{NSLK}(\rho^{\nu}, w^{\nu},\overline{v}^{\nu} | R,W^{\nu},\overline{V}^{\nu})$, the left hand-side is not smaller than
\[ \frac{1}{2} \int_{\T^d} \rho (|\overline{v}-\overline{V}|^2  +|u-U|^2 )+ \lambda \int_{\T^d} H(\rho|R) + \mu \int_0^t \int_{\T^d}  \rho |u-U|^2,    \]
which corresponds to $\mathcal{E}_{ELK}(\rho, u,\overline{v} | R,U,\overline{V})(t)$. On the right-hand side, as $b_{NSLK}^{\nu}(t)$ tends to
\[  b_{ELK}(t)(t) = \int_0^t \int_{\T^d} \frac{\rho}{R} \mathscr{E} \cdot (U-u),  \]
for all $t \geq 0$, and by the direct limit of $\mathcal{E}_{NSLK}(\rho^{\nu}, w^{\nu},\overline{v}^{\nu} | R,W^{\nu},\overline{V}^{\nu})(0)$, we conclude that
\[ \mathcal{E}_{ELK}(t) \leq\mathcal{E}_{ELK}(0) e^{Ct} + b_{ELK}(t) + C \int_0^t b_{ELK}(s) e^{C(t-s)} ds   ,\]
which shows that $(\rho,u,\overline{v})$ is indeed a dissipative solution in the sense of Definition \ref{dissipative_solution}.
\end{proof}

\appendix

\section{Definition of the operators and technical lemmas}
We recall here all the definitions of the operators used in this article. We denote $u,v$ two vectors and $\sigma = (\sigma_{ij})_{1\leq i,j \leq d}$, $\tau = (\tau_{ij})_{1\leq i,j \leq d}$ two tensor fields defined on $\Omega \subset \R^d$ smooth enough. First, denoting by $u_1, \ldots, u_d$ the coordinates of $u$, we respectively call \textit{divergence}, \textit{gradient} and \textit{laplacian} of $u$ the following functions:
\[\Div(u) = \sum_{i=1}^d \frac{\partial u_i}{\partial x_i}, \ \ \ \nabla u = \left( \frac{\partial u_i}{\partial x_j}\right)_{1 \leq i,j \leq d} \ \ \ \text{and} \ \ \   \Delta u = \Div ( \nabla u).    \]
We also call \textit{tensor product} of $u$ and $v$ the tensor given by:
\[ u \otimes v = (u_i v_j)_{1 \leq i,j \leq d}.  \]
Then, we call \textit{divergence} of $\sigma$ the vector given by:
\[  \Div( \sigma) = \left( \sum_{i=1}^d \frac{\partial \sigma_{ij}}{\partial x_j} \right)_{1 \leq i,j \leq d} .\]
Finally, we call \textit{scalar product} of $\sigma$ and $\tau$ the real function:
\[ \sigma : \tau = \sum_{ 1 \leq i,j \leq d} \sigma_{ij} \tau_{ij}. \]
Note that by definition we have $\sigma : \tau = \sigma^\top : \tau^\top$, and the norm associated to this scalar product is simply denoted by $| \cdot |$ in such a way that $|\sigma|^2= \sigma : \sigma$. We now give some useful properties from differential calculus:
\begin{proposition}
Let  $\rho$ be a scalar and $u$, $v$, $w$ be three vectors smooth enough on $\Omega$, then the following holds:
\begin{itemize}
    \item[$\bullet$] $(u \otimes v)w = (v \cdot w)u$,
    \item[$\bullet$] $\Div(u \otimes v) = (\Div \ v)u + (v \cdot \nabla)u$,
    \item[$\bullet$] $\Div( \rho u)= \nabla \rho \cdot u + \rho \ \Div \ u$,
    \item[$\bullet$] $\Div( \rho u \otimes v)= (\nabla \rho \cdot v)u + \rho (v \cdot \nabla)u + \rho \ \Div(v)u$.
\end{itemize}
\end{proposition}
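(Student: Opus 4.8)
The plan is to verify each of the four identities by a direct computation in Cartesian coordinates, writing out the components with respect to the canonical basis of $\R^d$ and applying the Leibniz rule to the partial derivatives. Throughout I write $u_i, v_i, w_i$ for the coordinates of the vectors and use the conventions recalled at the start of this Appendix, namely $(u\otimes v)_{ij}=u_iv_j$ for the tensor product and $\Div(\sigma)_i=\sum_{j=1}^d \partial_{x_j}\sigma_{ij}$ for the divergence of a tensor $\sigma$.

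For the first identity I simply evaluate the matrix--vector product componentwise: for each fixed $i$,
\[
((u\otimes v)w)_i=\sum_{j=1}^d (u\otimes v)_{ij}\,w_j=\sum_{j=1}^d u_i v_j w_j = u_i\Big(\sum_{j=1}^d v_j w_j\Big)=(v\cdot w)\,u_i,
\]
which is the $i$-th coordinate of $(v\cdot w)u$; since $i$ is arbitrary this proves the claim. No differentiation is involved, so this step is purely algebraic.

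For the remaining three identities I expand the divergence and distribute the derivative. For the second, $\Div(u\otimes v)_i=\sum_j \partial_{x_j}(u_i v_j)=\sum_j \big(v_j\partial_{x_j}u_i + u_i\partial_{x_j}v_j\big)$; collecting the two sums gives $(v\cdot\nabla)u_i + u_i\Div v$, which is the $i$-th component of $(\Div v)u+(v\cdot\nabla)u$. The third is the classical Leibniz rule for a scalar times a vector, $\Div(\rho u)=\sum_i\partial_{x_i}(\rho u_i)=\sum_i\big(u_i\partial_{x_i}\rho+\rho\partial_{x_i}u_i\big)=\nabla\rho\cdot u+\rho\Div u$. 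The fourth follows the same pattern applied to the tensor $\rho u\otimes v$, whose $(i,j)$ entry is $\rho u_i v_j$: a threefold product rule yields
\[
\Div(\rho u\otimes v)_i=\sum_j\partial_{x_j}(\rho u_i v_j)=u_i\sum_j v_j\partial_{x_j}\rho+\rho\sum_j v_j\partial_{x_j}u_i+\rho u_i\sum_j\partial_{x_j}v_j,
\]
which reads $(\nabla\rho\cdot v)u_i+\rho(v\cdot\nabla)u_i+\rho\,\Div(v)u_i$ componentwise. Alternatively one may write $\rho u\otimes v=(\rho u)\otimes v$, treat $\rho u$ as a single vector, and combine the second and third identities: the second gives $(\Div v)(\rho u)+(v\cdot\nabla)(\rho u)$, and expanding $(v\cdot\nabla)(\rho u)$ by the third reproduces exactly the same three terms.

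There is no genuine obstacle here; the only point requiring care is the bookkeeping of index conventions, in particular keeping the free index $i$ and the summation index $j$ separate in the tensor divergence, and applying the product rule to the correct number of factors in the last identity. The smoothness assumption on $\rho, u, v$ guarantees that all partial derivatives exist and that the Leibniz rule applies, so each equality holds pointwise on $\Omega$.
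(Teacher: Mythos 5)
Your proof is correct and complete: each of the four identities is verified by the standard componentwise computation with the Leibniz rule, and your index bookkeeping (free index $i$, summation index $j$, consistent with the convention $\Div(\sigma)_i=\sum_j \partial_{x_j}\sigma_{ij}$ used in the paper) is accurate. The paper states this proposition in the appendix without proof, treating the identities as standard facts of differential calculus, so there is no argument to compare against; your direct verification, together with the alternative derivation of the fourth identity by combining the second and third, is exactly what such a proof would look like.
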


We now state some technical lemmas which are used throughout this paper.
\begin{lemma} \label{lemma_power}
Let $T>0$ and $p,q \geq 1$, $g \in L^{\infty}(0,T;L^q(\T^d))$, and assume $\alpha:=\frac{pq}{p+q} \geq 1$. Then,
\begin{itemize}
    \item  $f \in L^2(0,T;L^p(\T^d)) \Rightarrow fg \in L^2(0,T;L^{\alpha}(\T^d))$,
    \item $f \in L^{\infty}(0,T;L^p(\T^d)) \Rightarrow fg \in L^{\infty}(0,T;L^{\alpha}(\T^d))$.
\end{itemize}
\end{lemma}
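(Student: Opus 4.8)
The plan is to reduce both assertions to a single pointwise-in-time spatial estimate obtained from Hölder's inequality, and then to integrate (respectively take the essential supremum) in the time variable.

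First I would verify that $p/\alpha$ and $q/\alpha$ are conjugate Hölder exponents. From the definition $\alpha = pq/(p+q)$ one computes $\alpha/p + \alpha/q = \alpha(1/p+1/q) = \alpha \cdot (p+q)/(pq) = 1$, so these two exponents are indeed conjugate; moreover, since $\alpha \le \min(p,q)$, both of them are $\ge 1$, so Hölder applies. Applying it to $|f(t)|^{\alpha}$ and $|g(t)|^{\alpha}$ for a.e. fixed $t$ gives
\[ \int_{\T^d} |f(t)g(t)|^{\alpha} \le \left( \int_{\T^d} |f(t)|^p \right)^{\alpha/p} \left( \int_{\T^d} |g(t)|^q \right)^{\alpha/q}, \]
that is, the pointwise-in-time bound
\[ \| f(t) g(t) \|_{L^{\alpha}(\T^d)} \le \| f(t) \|_{L^p(\T^d)} \, \| g(t) \|_{L^q(\T^d)}. \]

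Next I would handle the time dependence. Using $g \in L^{\infty}(0,T;L^q(\T^d))$ to bound $\|g(t)\|_{L^q(\T^d)} \le \|g\|_{L^{\infty}(0,T;L^q(\T^d))}$ for a.e. $t$, the two implications follow immediately. In the first case,
\[ \int_0^T \| f(t)g(t) \|_{L^{\alpha}(\T^d)}^2 \, dt \le \| g \|_{L^{\infty}(0,T;L^q(\T^d))}^2 \int_0^T \| f(t) \|_{L^p(\T^d)}^2 \, dt = \| g \|_{L^{\infty}(0,T;L^q(\T^d))}^2 \, \| f \|_{L^2(0,T;L^p(\T^d))}^2, \]
whereas in the second case, taking the essential supremum in $t$ directly yields
\[ \| fg \|_{L^{\infty}(0,T;L^{\alpha}(\T^d))} \le \| f \|_{L^{\infty}(0,T;L^p(\T^d))} \, \| g \|_{L^{\infty}(0,T;L^q(\T^d))}. \]
Both right-hand sides are finite under the stated hypotheses, which proves the two claims.

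The argument is essentially routine; the only points requiring a little care are the verification of the conjugacy relation — which is precisely the role of the hypothesis defining $\alpha$, together with $\alpha \ge 1$ guaranteeing that $L^{\alpha}$ is a genuine Banach space — and the joint measurability of the product $fg$, which ensures that $t \mapsto \|f(t)g(t)\|_{L^{\alpha}(\T^d)}$ is measurable so that the time integral and supremum above make sense. I expect no real obstacle beyond these standard bookkeeping checks.
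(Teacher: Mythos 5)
Your proof is correct, and it is the canonical argument: the paper states this lemma in the appendix without proof, treating it as the standard H\"older product estimate $\|f(t)g(t)\|_{L^{\alpha}(\T^d)} \leq \|f(t)\|_{L^p(\T^d)}\|g(t)\|_{L^q(\T^d)}$ (valid since $\alpha/p+\alpha/q=1$ and $\alpha\leq\min(p,q)$), followed by the obvious treatment of the time variable. Your write-up supplies exactly this reasoning, including the correct verification of the conjugacy of $p/\alpha$ and $q/\alpha$, so there is nothing to add.
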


For a proof of the following lemma, see \cite{jungel2010}:
\begin{lemma} \label{technical_lemma}
We have the following inequalities :
\[ \int_{\T^d} \rho |\nabla^2 \log\rho |^2 dx \geq  \kappa_d \int_{\T^d} | \nabla^2 \sqrt{\rho} |^2 dx \]
with $\kappa_2=7/8$ and $\kappa_3=11/15$, and
\[ \int_{\T^d} \rho |\nabla^2 \log\rho |^2 dx \geq  \kappa  \int_{\T^d} | \nabla \sqrt[4]{\rho} |^4 dx,  \]
with $\kappa > 0$.
\end{lemma}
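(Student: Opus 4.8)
The plan is to establish both inequalities by the classical ``systematic integration by parts'' of Jüngel and Matthes (as in \cite{jungel2010}): reduce each integrand to a \emph{pointwise} quadratic form in the symmetric tensor $M:=\nabla^2\rho/\rho$ and the vector $z:=\nabla\rho/\rho=\nabla\log\rho$, and then exploit the freedom to add divergences of well-chosen vector fields to turn the difference of the two sides into a pointwise nonnegative expression. First I would reduce to the case of $\rho$ smooth and bounded below, replacing $\rho$ by a mollification of $\rho+\eps$ so that all the manipulations below are licit; the general statement then follows by letting $\eps\to0$ and invoking Fatou's lemma.

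For the reduction itself, I use the pointwise identities
\[ \nabla^2\log\rho=\frac{\nabla^2\rho}{\rho}-\frac{\nabla\rho\otimes\nabla\rho}{\rho^2}=M-z\otimes z, \qquad \nabla^2\sqrt{\rho}=\frac{\sqrt{\rho}}{2}\Big(M-\tfrac12\,z\otimes z\Big), \]
so that $\rho\,|\nabla^2\log\rho|^2=\rho\,|M-z\otimes z|^2$ and $|\nabla^2\sqrt{\rho}|^2=\frac{\rho}{4}\,|M-\tfrac12 z\otimes z|^2$, and likewise $|\nabla\rho^{1/4}|^4=\frac{1}{256}\,\rho\,|z|^4$. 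Thus, after integration, both inequalities become comparisons between integrals of pointwise quadratic forms in the entries of $M$ and of $z\otimes z$.

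The heart of the argument is that these integral quadratic forms are not independent: integrating over $\T^d$ the divergence of vector fields of the type $\Phi_j=\alpha\,\frac{\partial_j\rho\,|\nabla\rho|^2}{\rho^2}+\beta\,\frac{\partial_k\rho\,\partial_{jk}\rho}{\rho}$ gives zero (no boundary terms on the torus) and produces identities relating the integrals of $|M|^2$, $(\mathrm{tr}\,M)^2$, $z^\top M z$, $|z|^2\,\mathrm{tr}\,M$ and $|z|^4$. Using these identities I would add a suitable multiple of such a divergence to the integrand of $\int_{\T^d}\rho|\nabla^2\log\rho|^2-\kappa_d\int_{\T^d}|\nabla^2\sqrt{\rho}|^2$ so as to absorb the indefinite cross terms, reducing the claim to the pointwise nonnegativity of a quadratic form $\mathcal Q(M,z)$. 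For the second inequality the same divergence-adjustment is used to bound $\int_{\T^d}\rho|\nabla^2\log\rho|^2$ below by a positive multiple of $\int_{\T^d}\rho|z|^4=256\int_{\T^d}|\nabla\rho^{1/4}|^4$, which yields the constant $\kappa>0$.

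The main obstacle is precisely this last optimization: choosing the divergence gauge $(\alpha,\beta)$ and certifying that the resulting $\mathcal Q(M,z)$ is positive semidefinite with the sharp, dimension-dependent constant. Maximizing $\kappa_d$ subject to $\mathcal Q\ge0$ is a finite-dimensional eigenvalue problem; diagonalizing $M$ and aligning $z$ with its eigenvectors, nonnegativity reduces to a scalar inequality in the eigenvalues of $M$ and the components of $z$, in which the dimension enters through the Cauchy--Schwarz relation $(\mathrm{tr}\,M)^2\le d\,|M|^2$ (sharp when $M\propto\mathrm{Id}$). The tensorial --- as opposed to scalar, one-dimensional --- nature of $M$ is what makes the computation delicate, since one must control the interaction between the trace part and the traceless part of the Hessian together with the alignment of $z$; it is also the reason the admissible constant degrades from $\kappa_2=7/8$ to $\kappa_3=11/15$. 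I would organize this final step by reducing, via the spectral decomposition of $M$, to an explicit quadratic in a few scalar variables and checking semidefiniteness there.
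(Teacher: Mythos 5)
Your proposal is correct and follows essentially the same route as the paper, which gives no proof of its own but defers to \cite{jungel2010}: the pointwise reductions to $M=\nabla^2\rho/\rho$ and $z=\nabla\log\rho$ (including the identities $\rho|\nabla^2\log\rho|^2=\rho|M-z\otimes z|^2$, $|\nabla^2\sqrt\rho|^2=\tfrac{\rho}{4}|M-\tfrac12 z\otimes z|^2$, $|\nabla\rho^{1/4}|^4=\tfrac{1}{256}\rho|z|^4$) and the use of integrated divergence identities on $\T^d$ to reduce matters to the semidefiniteness of a quadratic form in $(M,z)$ are exactly the systematic integration-by-parts method of Jüngel--Matthes used in the cited reference, where the dimension-dependent constants $\kappa_2=7/8$, $\kappa_3=11/15$ arise from the finite-dimensional optimization you describe. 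The only step you leave implicit, the explicit choice of gauge coefficients and verification of positivity, is a routine (if tedious) computation that the citation carries out, so there is no gap in the approach.
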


\begin{lemma}\textbf{(Gagliardo-Nirenberg inequality).} \\
Let $1\leq p,q,r \leq \infty$ and let $j$, $m$ be two integers, $0\leq j < m$. If
\[ \frac{1}{p} = \frac{j}{d} + \alpha \left( \frac{1}{r} - \frac{m}{d}   \right) + \frac{1-\alpha}{q}  \]
for some $a \in \left[ j/m,1 \right]$ ($a<1$ if $r>1$ and $m-j-d/r=0$), then there exists $C=C(d,m,j,a,q,r)$ such that
\[ \| \D^j u \|_{L^p(\T^d)}  \leq C \| \D^j u \|_{L^r(\T^d)}^{\alpha} \| u \|_{L^q(\T^d)}^{(1-\alpha)}\]
for all $u \in W^{m,j}(\T^d) \cap L^q(\T^d)$.
\label{gagliardo_nirenberg}
\end{lemma}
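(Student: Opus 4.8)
The plan is to recognize this as the classical Gagliardo--Nirenberg interpolation inequality, in which the stated relation among the exponents is exactly the scaling condition forcing the estimate to be dimensionally consistent (and in which the derivative of top order $\D^m u$ appears on the right, as the relation requires). I would first record this consistency check on $\R^d$: writing $u_\lambda(x)=u(\lambda x)$ one has $\|\D^j u_\lambda\|_{L^p}=\lambda^{j-d/p}\|\D^j u\|_{L^p}$, $\|\D^m u_\lambda\|_{L^r}=\lambda^{m-d/r}\|\D^m u\|_{L^r}$ and $\|u_\lambda\|_{L^q}=\lambda^{-d/q}\|u\|_{L^q}$, so scale invariance of the inequality demands $j-d/p=\alpha(m-d/r)-(1-\alpha)d/q$, which rearranges to precisely the hypothesis. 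This both pins down the admissible exponents and dictates the bookkeeping. The bulk of the work is then to prove the estimate on $\R^d$ and to transfer it to $\T^d$.

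For the $\R^d$ estimate I would follow Nirenberg's original strategy. The base case is the Gagliardo--Nirenberg--Sobolev inequality $\|f\|_{L^{s^*}}\le C\|\nabla f\|_{L^s}$ with $1/s^*=1/s-1/d$ for $1\le s<d$, obtained by expressing $f$ as the integral of $\partial_i f$ along each coordinate axis and iterating H\"older's inequality (the Loomis--Whitney argument). Combining this with the multiplicative interpolation $\|f\|_{L^p}\le\|f\|_{L^{s^*}}^{\theta}\|f\|_{L^q}^{1-\theta}$, and applying the Sobolev step to a suitable power of $f$ to reach the sub-Sobolev and $p$-dependent ranges, yields the first-order case $m=1$:
\[ \|f\|_{L^p}\le C\|\nabla f\|_{L^r}^{\alpha}\|f\|_{L^q}^{1-\alpha}. \]

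I would then reach general $m$ with $0\le j<m$ by induction, the engine being the one-step convexity estimate interpolating a derivative of order $j$ between orders $j-1$ and $j+1$. Chaining these interpolations and composing with the $m=1$ case produces the full inequality with the exponents collapsing exactly onto the stated relation; the excluded borderline ($r>1$ with $m-j-d/r=0$) is precisely where the Sobolev embedding degenerates and must be avoided. To descend to the torus I would use a bounded Sobolev extension (or a periodic cutoff) together with the compactness of $\T^d$. Since $\T^d$ has no boundary and both sides carry the same scaling weight, and since $j\ge 1$ (or, for $j=0$, after subtracting the mean) annihilates the constant-function obstruction, no additive lower-order correction term is needed.

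The main obstacle is the higher-order induction: one must interpolate between derivatives of consecutive orders while tracking the exponents so that they land exactly on the prescribed relation, and simultaneously isolate and discard the genuine endpoint cases. A secondary subtlety is the torus transfer, where one must argue that, unlike the generic bounded-domain situation, the homogeneous inequality holds here without an extra lower-order term. Since the result is standard, in practice I would present this sketch and cite a reference such as \cite{ineg_sobo_log} for the full details.
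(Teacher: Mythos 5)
The paper offers no proof of this lemma: it is quoted in the appendix as a classical fact, in the same spirit as the neighbouring technical lemmas that are delegated to \cite{jungel2010} and \cite{carles2019}, so there is no in-paper argument to compare yours against. Your sketch is the standard Nirenberg route (scaling identification of the admissible exponents, the Sobolev base case $m=1$ via the iterated-H\"older/Loomis--Whitney argument, interpolation between consecutive derivative orders for the induction on $m$, then transfer to $\T^d$), and it is essentially sound; you also correctly repair the statement as printed, which carries typos --- the right-hand side must read $\|\D^m u\|_{L^r(\T^d)}^{\alpha}$ rather than $\|\D^j u\|_{L^r(\T^d)}^{\alpha}$, the space should be $W^{m,r}(\T^d)$ rather than $W^{m,j}(\T^d)$, and $a$ and $\alpha$ denote the same parameter. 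One point in your sketch deserves emphasis: on $\T^d$ the homogeneous inequality genuinely fails for $j=0$ and $\alpha>0$ (take $u$ constant: the left side is positive, the right side vanishes), so one must either subtract the mean, as you say, or retain an additive $C\|u\|_{L^q}$ term. This matters for the paper itself, since its only invocation of the lemma (Remark \ref{remark_focusing_case}, applied to $\sqrt{\rho}$, which is nonnegative with fixed mass) is precisely a $j=0$ case where mean subtraction is unavailable; what is really used there is the inhomogeneous version, harmless in that context because $\|\sqrt{\rho}\|_{L^2}$ is conserved. For $j\ge 1$ your homogeneous claim does hold, by reducing to mean-zero functions via the bound of the mean of $u$ by $\|u\|_{L^q}$ --- but note that the ``periodic cutoff'' transfer then still requires absorbing the lower-order terms the cutoff generates (via Poincar\'e--Wirtinger for mean-zero functions, or a Bernstein/frequency-decomposition argument), a step your sketch flags as a subtlety but does not carry out. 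For a lemma of this pedigree, presenting the sketch and citing a standard reference, as you propose, is exactly what the paper does and is the right call.
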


For a proof of the following lemma, see \cite{carles2019}:
\begin{lemma} \label{inverse_lemma}
For $n \in \mathbb{N}^*$, there holds
\[  \| \nabla^n(f^{-1}) \|_{L^2(\T^d)} \lesssim (1+\| f^{-1} \|_{L^4(\T^d)} + \| f^{-1} \|_{L^{2(n+1)}(\T^d)})^{n+1}(1+ \| f \|_{H^{\sigma}(\T^d)} )^n  \]
with $\sigma > n +d/2$.
\end{lemma}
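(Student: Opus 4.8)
The plan is to reduce the estimate to a pointwise algebraic decomposition of $\nabla^n(f^{-1})$ and then bound each resulting elementary term by Hölder's inequality combined with a Sobolev embedding and Lebesgue interpolation. Throughout I treat $f$ as smooth and bounded away from zero, so that $f^{-1}$ and all the norms below are well defined; the general case follows by approximation.

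First I would establish, by induction on $n$, the pointwise bound
\[ |\nabla^n(f^{-1})| \lesssim \sum_{\substack{m_1,\dots,m_n \ge 0 \\ \sum_j j m_j = n}} |f|^{-(k+1)} \prod_{j=1}^n |\nabla^j f|^{m_j}, \qquad k := \sum_j m_j, \]
where the sum is finite and the implicit constant depends only on $n$ and $d$. For $n=1$ this is just $\nabla(f^{-1}) = -f^{-2}\nabla f$. For the inductive step one differentiates a generic summand $|f|^{-(k+1)}\prod_j |\nabla^j f|^{m_j}$: differentiating the factor $f^{-(k+1)}$ produces a term with $k \mapsto k+1$ and $m_1 \mapsto m_1+1$, while differentiating a factor $|\nabla^j f|^{m_j}$ lowers $m_j$ by one and raises $m_{j+1}$ by one. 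In both cases the total derivative weight $\sum_j j m_j$ increases from $n$ to $n+1$ while $1 \le k \le n+1$ is preserved. (Equivalently, this is the Faà di Bruno formula applied to $t \mapsto t^{-1}$, whose $k$-th derivative is $(-1)^k k!\, t^{-(k+1)}$.)

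Then I would bound each term. Since $\sum_j j m_j = n$ we have $1 \le k = \sum_j m_j \le n$. Applying Hölder with the exponent $2(k+1)$ on the $f^{-1}$ factors and $L^\infty$ on every derivative factor gives
\[ \Big\| |f|^{-(k+1)} \prod_{j} |\nabla^j f|^{m_j} \Big\|_{L^2(\T^d)} \le \| f^{-1} \|_{L^{2(k+1)}(\T^d)}^{k+1} \prod_{j=1}^n \| \nabla^j f \|_{L^\infty(\T^d)}^{m_j}. \]
Because $\sigma > n + d/2$, the Sobolev embedding $H^\sigma(\T^d) \hookrightarrow W^{n,\infty}(\T^d)$ yields $\|\nabla^j f\|_{L^\infty} \lesssim \|f\|_{H^\sigma}$ for every $0 \le j \le n$, so the product of derivative factors is controlled by $\|f\|_{H^\sigma}^{k} \le (1+\|f\|_{H^\sigma})^n$. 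For the inverse factor, since $1 \le k \le n$ the exponent $2(k+1)$ lies in $[4, 2(n+1)]$, and Lebesgue interpolation together with Young's inequality gives $\|f^{-1}\|_{L^{2(k+1)}} \le 1 + \|f^{-1}\|_{L^4} + \|f^{-1}\|_{L^{2(n+1)}}$, whence $\|f^{-1}\|_{L^{2(k+1)}}^{k+1} \le (1+\|f^{-1}\|_{L^4}+\|f^{-1}\|_{L^{2(n+1)}})^{n+1}$.

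Summing the finitely many terms (the number of which is absorbed into the constant) then yields exactly the claimed inequality. The only genuinely delicate point is the bookkeeping in the first step, namely checking that the induction preserves the homogeneity $\sum_j j m_j = n$ and the bound $k \le n$, since this is precisely what forces the powers $n+1$ and $n$ in the final estimate to be uniform over all summands; the remaining steps are standard applications of Hölder, Sobolev embedding and interpolation. I would stress that the hypothesis $\sigma > n+d/2$ is used at full strength for the extreme summand $f^{-2}\nabla^n f$ (the case $k=1$), where the top-order derivative $\nabla^n f$ must be placed in $L^\infty$.
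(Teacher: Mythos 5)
Your proof is correct: the Fa\`a di Bruno decomposition of $\nabla^n(f^{-1})$ into monomials $f^{-(k+1)}\prod_j(\nabla^j f)^{m_j}$ with $\sum_j j m_j=n$ and $1\le k\le n$, H\"older with every derivative factor in $L^{\infty}$ via the embedding $H^{\sigma}(\T^d)\hookrightarrow W^{n,\infty}(\T^d)$ (exactly where $\sigma>n+d/2$ enters), and interpolation of $\|f^{-1}\|_{L^{2(k+1)}}$ between the endpoints $L^4$ (the case $k=1$) and $L^{2(n+1)}$ (the case $k=n$) yields precisely the stated bound, with the powers $n+1$ and $n$ uniform over all summands as you verify. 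The paper itself gives no proof of this lemma, deferring to \cite{carles2019}, and your argument follows the same standard route that the structure of the right-hand side (the two Lebesgue exponents $4$ and $2(n+1)$ and the powers $n+1$ and $n$) already suggests.
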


\bibliographystyle{plain}
\bibliography{biblio}

\end{document}